\documentclass[12pt,reqno,sumlimits]{amsart}

\usepackage{amssymb,amscd,amsmath,epsfig}
\textwidth 6.75in

\addtolength{\oddsidemargin}{-.8in}
	\addtolength{\evensidemargin}{-.8in}

	\addtolength{\topmargin}{-.875in}
	\addtolength{\textheight}{1.75in}



\newtheorem{theorem}{Theorem}[section]
\newtheorem{corollary}[theorem]{Corollary}
\newtheorem{lemma}[theorem]{Lemma}
\newtheorem{proposition}[theorem]{Proposition}

\theoremstyle{definition}
\newtheorem{mydef}{Definition}[section]
\newtheorem{remark}{Remark}[section]

\newtheorem{example}[theorem]{Example}



\renewcommand{\to}{\longrightarrow}

\newsavebox{\savepar}

\numberwithin{equation}{section}

%
%
\newcounter{labelflag} \setcounter{labelflag}{0}
\newcommand{\labelon}{\setcounter{labelflag}{1}}
\newcommand{\Label}[1]{
                       \ifnum\thelabelflag=1
                          \ifmmode
                             \makebox[0in][l]{\qquad\fbox{\rm#1}}
                          \else
                             \marginpar{\vspace{0.7\baselineskip}
                                        \hspace{-1.1\textwidth}
                                        \fbox{\rm#1}}
                          \fi
                       \fi
                       \label{#1}
                      }
\labelon

\usepackage[top=3cm, bottom=1.8cm, left=2cm, right=2cm,headsep=30pt]{geometry} 
\usepackage{tikz-cd}

 \newcommand{\dir}{\partial\kern-.570em /}
 \newcommand{\dire}{\partial\kern-.570em /{}^{\rm eq}}

\begin{document}

\title{$S^1$-Equivariant Chern-Weil constructions on loop space}
\author[T. McCauley]{Thomas McCauley}
\address{Department of Mathematics and Statistics\\
  Boston University}
\email{tmccaule@math.bu.edu}

\maketitle

\begin{abstract}
We study the existence of $S^1$-equivariant characteristic classes on certain natural infinite rank bundles over the loop space $LM$ of a manifold $M$.  We discuss the different $S^1$-equivariant cohomology theories in the literature and clarify their relationships.  We attempt to use $S^1$-equivariant Chern-Weil techniques to construct $S^1$-equivariant characteristic classes.  The main result is the construction of a sequence of $S^1$-equivariant characteristic classes on the total space of the bundles, but these classes do not descend to the base $LM$.  Nevertheless, we conclude by identifying a class of bundles for which the $S^1$-equivariant first Chern class does descend to $LM$.
\end{abstract}

\bigskip

\section{Introduction}

In this paper we study the existence of $S^1$-equivariant characteristic classes on certain natural infinite rank bundles over the loop space $LM$ of a manifold $M$.  Our main result is the construction by $S^1$-equivariant Chern-Weil techniques of an $S^1$-equivariant first Chern class associated to a structure group reduction of these bundles.  $S^1$-equivariant characteristic classes on $LM$ have attracted interest for many years, going back to Witten's formal proof of the index theorem by formally applying finite dimensional $S^1$-equivariant techniques to $LM$ \cite{Atiyah} and Bismut's construction of the Bismut-Chern character \cite{Bis}.  These $S^1$-equivariant characteristic classes belong to different $S^1$-equivariant cohomology theories.  As a first task, we summarize these $S^1$-equivariant cohomology theories and we clarify how they are related.  One particular theory, $H_{S^1}^*(N)$, is distinguished by having a topological model, as discussed in Section \ref{EquivariantCohomology}.  In Section \ref{LoopSpaceConstructions} we find that $S^1$-equivariant Chern-Weil techniques only partially extend to the pushdown bundles we consider.  Our main result in this section, Theorem \ref{IndependentOfConnection}, proves that these techniques define $S^1$-equivariant characteristic classes on the total space of an associated principal bundle that do not descend to the base.  We then identify a class of bundles for which one of these classes does descend to $LM$, defining an $S^1$-equivariant first Chern class associated to these infinite rank bundles.  Theorem \ref{FCCExtension} proves that this class extends the ordinary first Chern class on $M$.

Characteristic classes on the loop space of a manifold have been studied in a variety of contexts.  Ordinary characteristic classes have been considered in \cite {McLaughlin}, for example, where McLaughlin showed that $M$ admits a string structure precisely when a certain characteristic class on $LM$ vanishes.  Moreover, characteristic classes on $LM$ have informed the study of 2-dimensional field theories on $M$, known as sigma models, by regarding them as 1-dimensional field theories on $LM$.  For example, to study fermions one asks for a spin structure on $LM$, which is a certain lift of the structure group of the frame bundle of $LM$; see \cite{Waldorf} for a discussion of these ideas.

Because $LM$ admits an $S^1$-action by rotation of loops, it is natural to study the $S^1$-equivariant cohomology of $LM$.  $S^1$-equivariant characteristic classes were studied in \cite{Atiyah} where, following an idea of Witten \cite{Witten}, Atiyah showed that one can formally compute the index of the Dirac operator on the spin complex of a spin manifold $M$ as an integral of certain $S^1$-equivariant characteristic classes over $LM$.  Exploring this idea, Bismut \cite{Bis} defined the Bismut-Chern character, ${\rm BCh}$, a differential form on $LM$ that extends the Chern character, ${\rm Ch}$.  The definition of ${\rm BCh}$ was refined in \cite{GJP} using methods from non-commutative geometry, and has been studied further, for example in \cite{TWZ}.  Recently a twisted Bismut-Chern character was defined in \cite{HM} and used to study T-duality in type IIA and IIB string theory from a loop space perspective.

In Section \ref{EquivariantCohomology} we present the different $S^1$-equivariant cohomology theories used in the literature.  In the finite dimensional setting, $S^1$-equivariant characteristic classes on an $S^1$-manifold $N$ belong to $H_{S^1}^*(N)$, the $S^1$-equivariant cohomology of $N$, which we recall in Section \ref{CartanModel}.  Completed periodic $S^1$-equivariant cohomology, $h_{S^1}^*(N)$ is defined in Section \ref{PeriodicCohomology}.  In Section \ref{SuperCohomology} we introduce an $S^1$-equivariant cohomology theory $\bar{h}_{S^1}^*(N)$, which we call super $S^1$-equivariant cohomology. $\bar{h}_{S^1}^*(N)$ has been used in the literature and was called Witten's complex in \cite{AB}.  The table and diagram in Section \ref{LocalizationThms} summarizes these $S^1$-equivariant cohomology theories and the maps between them.  For finite dimensional manifolds, $S^1$-equivariant characteristic classes may be constructed by $S^1$-equivariant Chern-Weil techniques.  There are two equivalent constructions, one via $S^1$-equivariant vector bundles, outlined in Section \ref{GVectorBundle}, and another by $S^1$-equivariant principal bundles, outlined in Section \ref{GPrincipalBundle}.

In the study of $S^1$-equivariant characteristic classes on loop space, one may ask whether $S^1$-equivariant Chern-Weil techniques can be used to construct $S^1$-equivariant characteristic classes.  Such an approach is hinted at in \cite{Bis} and explicitly attempted in \cite{LMRT}.  In Section \ref{PushdownBundles} we construct the pushdown bundle $\mathcal{E} \to LM$, an infinite rank vector bundle built from a finite rank vector bundle $E \to M$, whose fiber is modeled on $L\mathbb{C}^n$.  In particular we see that the pushdown bundle is an $S^1$-equivariant vector bundle, and we ask whether we can construct $S^1$-equivariant characteristic classes.  Section \ref{CurvatureOperators} summarizes the attempt to construct an $S^1$-equivariant Chern character via a covariant derivative on the pushdown bundle, as in \cite{LMRT}, and we note why the construction is not well defined.  The reason this construction fails becomes clearer when we attempt to construct the $S^1$-equivariant Chern character via the principal $LU(n)$-bundle $LFrE$, the loop space of the frame bundle $FrE \to M$, which serves as the frame bundle for $\mathcal{E}$.  In Section \ref{PrincipalBundleApproach} we see that $LFrE$ is not an $S^1$-equivariant principal $LU(n)$-bundle.  Consequently, our main result, Theorem \ref{IndependentOfConnection}, constructs a sequence of $S^1$-equivariant characteristic classes on the total space $LFrE$ that do not descend to the base $LM$.  For comparison, Section \ref{BismutChernCharacter} summarizes the construction of ${\rm BCh}$, and we show that this characteristic class descends to $LM$ but belongs to $\bar{h}_{S^1}^*(LM)$ rather than $H_{S^1}^*(LM)$.

We end by discussing in Section \ref{EquFCC} a class of bundles for which the $S^1$-equivariant Chern-Weil techniques define an $S^1$-equivariant first Chern class that descends to $LM$, after passing to a reduction of the structure group of $LFrE$.  In Section \ref{StructureGroupReduction} we show that $LFrE$ admits a reduction of its structure group to $L^0U(n)$, the connected component of $LU(n)$ containing the identity, when $c_1(E)$ belongs to the kernel of $\tau^*$, the transgression map on cohomology.  In Section \ref{ReducedCircleAction} we show that the reduced bundle, $L^0FrE$, admits an $S^1$-action such that the inclusion $L^0FrE \hookrightarrow LFrE$ is an $S^1$-equivariant map, inducing  a map $H_{S^1}^*(LFrE) \to H^*_{S^1}(L^0FrE)$.  After restricting to this sub-bundle, we see that $S^1$-equivariant Chern-Weil techniques define an $S^1$-equivariant first Chern class, $c_1^{S^1}(\mathcal{E})$.  Moreover, Theorem \ref{FCCExtension} proves that $c_1^{S^1}(\mathcal{E})$ extends $c_1(E)$, and we present a criterion that detects when $c_1^{S^1}(\mathcal{E})$ is non-trivial.  Section \ref{NontrivialExamples} identifies a collection of loop spaces that admit non-trivial $c_1^{S^1}(\mathcal{E})$.

As a result, $S^1$-equivariant Chern-Weil techniques only partially extend to pushdown bundles over loop space.  It is a challenging problem to construct $S^1$-equivariant characteristic classes on loop space in general.  The author is unaware of a topological construction and it seems difficult to construct other characteristic classes by Bismut's modified $S^1$-equivariant Chern-Weil technique.  This is an interesting problem for future work.

We would like to thank Steven Rosenberg and Mahmoud Zeinalian for many helpful discussions.  We would like to thank David Fried for his suggestion that led to Section \ref{NontrivialExamples}.  We also would like to thank the referee for many helpful suggestions, especially regarding the various maps between equivariant cohomology theories in Section \ref{EquivariantCohomology}.

\section{$S^1$-equivariant cohomology and characteristic classes}\label{EquivariantCohomology}

This section gathers some basic definitions and important properties of various $S^1$-equivariant cohomology theories as a background for Sections \ref{LoopSpaceConstructions} and \ref{EquFCC}, where we discuss $S^1$-equivariant characteristic classes on loop space. In Section \ref{CartanModel} we define the $S^1$-equivariant cohomology of a $S^1$-manifold $N$, written $H_{S^1}^*(N)$, and the Cartan model for $S^1$-equivariant cohomology.  Sections \ref{PeriodicCohomology} and \ref{SuperCohomology} introduce completed periodic $S^1$-equivariant cohomology, $h_{S^1}^*(N)$, and super $S^1$-equivariant cohomology, $\bar{h}_{S^1}^*(N)$.  Section \ref{LocalizationThms} states the localization theorem for these $S^1$-equivariant cohomology theories, which says that these $S^1$-equivariant cohomology theories are determined on the fixed-point set of the $S^1$-action.

We describe two approaches to $S^1$-equivariant Chern-Weil theory on a finite rank bundle $E$ over a finite dimensional manifold $N$.  These techniques construct $S^1$-equivariant characteristic classes belonging to $H_{S^1}^*(N)$.  Section \ref{GVectorBundle} constructs $S^1$-equivariant characteristic classes by $S^1$-equivariant vector bundles and Section \ref{GPrincipalBundle} constructs the same classes by $S^1$-equivariant principal $U(n)$-bundles.  Along the way we identify the main points of the theory that differ from the loop space case discussed in Section \ref{LoopSpaceConstructions}.

\subsection{The Cartan model of $G$-equivariant cohomology} \label{CartanModel}

Throughout this paper we work with de Rham cohomology of complex-valued forms.  Let $G$ be a compact and connected Lie group and suppose $G$ acts on a manifold $N$. The \emph{$G$-equivariant cohomology} of $N$ is $H_G^*(N) \overset{\rm def}{=} H^*(N \times_G EG)$.   The Cartan model is a differential graded algebra that serves as an algebraic model for equivariant cohomology, often proving convenient for computations.  We summarize its construction below.

Consider the space $S(\mathfrak{g}^*) \otimes \Lambda^*(N)$.  By identifying the symmetric algebra $S(\mathfrak{g}^*)$ with the ring of polynomials on $\mathfrak{g}$, we may identify $S(\mathfrak{g}^*) \otimes \Lambda^*(N)$ with the ring of polynomial functions on $\mathfrak{g}$ valued in $\Lambda^*(N)$.  $G$ acts on this ring by
\begin{align*}
g \cdot (p \otimes \omega)(X) &= p({\rm Ad}_{g^{-1}}X) \otimes g^*\omega.
\end{align*}
This space is graded by declaring $\deg(p \otimes \omega) = 2\deg(p) + \deg(\omega)$.  Let $C_G(N) = (S(\mathfrak{g}^*) \otimes \Lambda^*(N) )^G$ denote the subspace invariant under this action.  We call an element of $C_G(N)$ an \emph{equivariant differential form}.  Take a basis $\{ X_j \}$ for $\mathfrak{g}$ and let $\{ u_j \}$ be the corresponding generators of $S(\mathfrak{g}^*)$ induced by the dual basis of $\mathfrak{g}^*$.  The \emph{equivariant differential} is $d_G = d - \sum_j u_j \otimes i_{X_j}$, which acts on this complex by
\begin{align*}
d_G(p \otimes \omega)  &= p \otimes d\omega - \sum_j u_j p \otimes i_{X_j} \omega.
\end{align*}
Note that this definition does not depend on the choice of basis $\{ X_j \}$.  This operator is a differential because $d_G^2 = - \sum_j u_j \otimes \mathcal{L}_{X_j}$, which is the zero operator on invariant elements.  Notice that if $X = c^k X_k \in \mathfrak{g}$,
\begin{align*}
d_G(p \otimes w)(X) &= (p \otimes d\omega)(X) - \sum_j (u_j p \otimes i_{X_j} \omega)(X)\\
&= p(X) \otimes d\omega - \sum_j u_j(c^kX_k)p(X) \otimes i_{X_j} \omega\\
&= p(X) \otimes d\omega - \sum_j c^j p(X)\otimes i_{X_j} \omega\\
&= p(X) \otimes d\omega - p(X) \otimes i_X\omega
\end{align*}
For this reason some authors, such as \cite{BGV}, write the equivariant differential as $d - i_X$.  The complex $(C_G(N), d_G)$ is called the \emph{Cartan model} of equivariant cohomology and its cohomology is isomorphic to $H_G^*(N)$ \cite{GS}.

Consider the case $G = S^1$.  Because $S^1$ is abelian, its adjoint action is trivial and $(S(\mathfrak{g}^*) \otimes \Lambda^*(N) )^G = S(\mathfrak{g}^*) \otimes \Lambda_{S^1}^*(N)$, where $\Lambda_{S^1}^*(N)$ is the subspace of differential forms on $N$ invariant under the $S^1$-action.  Furthermore $S(\mathfrak{g}^*) = \mathbb{R}[u]$ for some generator $u \in \mathfrak{g}^*$ dual to the generator $X \in \mathfrak{g}$, so we may write the Cartan model as $(\Lambda_{S^1}^*(N)[u], d - ui_X)$, omitting the tensor product for convenience. 

\subsection{Periodic $S^1$-equivariant cohomology} \label{PeriodicCohomology}

Important theorems in the study of equivariant cohomology, like the localization formula of \cite[\textsection 7.2]{BGV}, involve rational maps $\mathfrak{g} \to \Lambda^*(N)$ rather than polynomial maps. For this reason we consider a variation of the Cartan model that includes such rational maps.  Consider the case $G = S^1$ and the Cartan model $(\Lambda_{S^1}^*(N)[u], d - ui_X)$.    The \emph{periodic $S^1$-equivariant cohomology} of $N$ is the cohomology of $(\Lambda_{S^1}^*(N)[u, u^{-1}], d - ui_X)$.  It is the localization of equivariant cohomology and it is denoted in \cite{JP} by $u^{-1}H_{S^1}^*(N)$.  This cohomology theory is called \emph{periodic} because of
\begin{proposition}
For any $k$, $u^{-1}H_{S^1}^k(N) \cong u^{-1}H_{S^1}^{k + 2}(N)$.
\end{proposition}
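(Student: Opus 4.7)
The plan is to produce the periodicity isomorphism directly by exhibiting a chain map of degree $+2$ that is already invertible at the chain level, and then observing that it passes to cohomology.

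First I would define the multiplication-by-$u$ map
\[
m_u : \Lambda_{S^1}^*(N)[u,u^{-1}] \longrightarrow \Lambda_{S^1}^*(N)[u,u^{-1}], \qquad m_u(\alpha) = u\alpha.
\]
Since $u$ is a formal variable of degree $2$ in the grading described in Section \ref{CartanModel}, $m_u$ shifts total degree by exactly $2$, so it restricts to a map $m_u : (\Lambda_{S^1}^*(N)[u,u^{-1}])^k \to (\Lambda_{S^1}^*(N)[u,u^{-1}])^{k+2}$.

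Next I would verify that $m_u$ is a chain map for the differential $d_{S^1} = d - u i_X$. This is a one-line computation: $u$ is a scalar with respect to the form degree and $du=0$, so for any $\alpha$,
\[
d_{S^1}(u\alpha) = d(u\alpha) - u\, i_X(u\alpha) = u\, d\alpha - u^2\, i_X \alpha = u\,(d\alpha - u\, i_X \alpha) = u\, d_{S^1}(\alpha).
\]
Hence $m_u \circ d_{S^1} = d_{S^1} \circ m_u$, and $m_u$ descends to a map on cohomology shifting degree by $2$.

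Finally, because we have adjoined $u^{-1}$, the analogous map $m_{u^{-1}}$ is defined and is also a chain map (by the identical calculation) of degree $-2$, and at the chain level $m_u \circ m_{u^{-1}} = \mathrm{id} = m_{u^{-1}} \circ m_u$. Passing to cohomology gives mutually inverse maps $u^{-1}H_{S^1}^k(N) \rightleftarrows u^{-1}H_{S^1}^{k+2}(N)$, which establishes the isomorphism. There is really no obstacle here; the content of the statement is simply that inverting $u$ makes a degree-$2$ generator into a periodicity element, and the only thing to confirm is that $u$ is compatible with the twisted differential $d - u i_X$, which it is because $u$ is central, closed, and commutes with the contraction operator $i_X$.
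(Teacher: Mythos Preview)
Your argument is correct and is essentially the same as the paper's: you define multiplication by $u$ as a degree-$2$ chain map, check it commutes with $d - u i_X$, and use that $u^{-1}$ is present to exhibit the inverse. The paper phrases this as the map $T[\omega] = [u\omega]$ with inverse $T^{-1}[\omega] = [u^{-1}\omega]$, after noting $u(d - ui_X) = (d - ui_X)u$.
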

\begin{proof}
We claim that the map $T : u^{-1}H_{S^1}^k(N) \to u^{-1}H_{S^1}^{k + 2}(N)$ given by $T[\omega] = [u \omega]$ is an isomorphism.  It is straightforward to check that $u(d - ui_X) = (d - ui_X)u$ as operators on this complex.  It follows that $u\omega$ is equivariantly closed if and only if $\omega$ is equivariantly closed and $u \omega$ is equivariantly exact if and only if $\omega$ is equivariantly exact.  Therefore $T$ is a well defined linear map whose inverse is given by $T^{-1}[\omega] = [u^{-1}\omega]$.
\end{proof}

Another variant on the Cartan model is \emph{completed periodic $S^1$-equivariant cohomology}, written $h_{S^1}^*(N)$, which is the cohomology of $(\Lambda_{S^1}^*(N)[u, u^{-1}]], d - ui_X)$.  Notice that completed periodic $S^1$-equivariant cohomology enjoys the same periodicity property $h_{S^1}^k(N) \cong h_{S^1}^{k + 2}(N)$, where the isomorphism is again given by $[\omega] \mapsto [u \omega]$.  If $N$ is finite dimensional, $u^{-1}H_{S^1}^*(N)$ and $h_{S^1}^*(N)$ are isomorphic, though the two cohomology theories may differ for infinite dimensional manifolds.  Moreover, $u^{-1}H_{S^1}^*(N)$ is trivial when $N$ is infinite dimensional, while $h_{S^1}^*(N)$ need not be.  See \cite[\textsection 1]{JP} for a more detailed discussion of $u^{-1}H_{S^1}^*(N)$ and $h_{S^1}^*(N)$.

Let $N_0$ be the fixed point set of the $S^1$-action.  Assume that $N_0$ has an $S^1$-invariant neighborhood $U$ such that the inclusion $i : N_0 \to U$ is an $S^1$-equivariant homotopy equivalence.  An $S^1$-manifold that admits such a neighborhood is called \emph{regular}.  In \cite{JP}, Jones and Petrack prove
\begin{theorem}\label{PeriodicRestriction} If $N$ is a regular $S^1$-manifold, then the inclusion of the fixed point set $i : N_0 \to N$ induces an isomorphism
\begin{align*}
i^* : h_{S^1}^*(N) \cong h_{S^1}^*(N_0).
\end{align*}
\end{theorem}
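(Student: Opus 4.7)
The plan is to combine Mayer--Vietoris and homotopy invariance for $h_{S^1}^*$ with a vanishing result on the complement of the fixed-point set. By regularity, cover $N$ by the $S^1$-invariant open sets $U$ (along which $i:N_0\to U$ is an equivariant homotopy equivalence) and $V = N \setminus N_0$; observe that $U \cap V = U \setminus N_0$ is also fixed-point free. Homotopy invariance of $h_{S^1}^*$ gives $h_{S^1}^*(U) \cong h_{S^1}^*(N_0)$ via $i^*$. If one knows $h_{S^1}^*(V) = 0$ and $h_{S^1}^*(U \cap V) = 0$, then the Mayer--Vietoris long exact sequence forces the restriction $h_{S^1}^*(N) \to h_{S^1}^*(U)$ to be an isomorphism, and composing yields the claim.

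The key input is the vanishing lemma: if $S^1$ acts on $W$ without fixed points, then $h_{S^1}^*(W) = 0$. Since the generating vector field $X$ is nowhere zero on $W$, I would construct an $S^1$-invariant $1$-form $\theta$ with $\theta(X) = 1$ by choosing such $1$-forms on slice neighborhoods of each orbit, averaging over $S^1$ to make them invariant, and patching with an $S^1$-invariant partition of unity $\{\rho_\alpha\}$; the condition $\theta(X)=1$ is preserved under convex combinations since $\sum_\alpha \rho_\alpha \theta_\alpha(X) = \sum_\alpha \rho_\alpha = 1$. Invariance of $\theta$ together with Cartan's formula gives $i_X d\theta = \mathcal{L}_X\theta - d(i_X\theta) = -d(1) = 0$.

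Setting $\alpha = u^{-1}\theta$, a direct computation gives $(d - ui_X)\alpha = u^{-1}d\theta - 1$. Working in the completed complex, define
\[
\gamma = \sum_{k \geq 0} (u^{-1}d\theta)^k \in \Lambda_{S^1}^*(W)[u, u^{-1}]],
\]
which is a bona fide element since only non-positive powers of $u$ appear. Using $i_X d\theta = 0$ termwise, one checks that $\gamma$ is equivariantly closed, and the telescoping identity $(1 - u^{-1}d\theta)\gamma = 1$ holds. Consequently $(d-ui_X)(-\alpha\gamma)=1$, and for any equivariantly closed $\omega$ the derivation property yields $(d-ui_X)(-\alpha\gamma\omega) = \omega$, proving the vanishing.

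The main obstacle I anticipate is establishing Mayer--Vietoris for $h_{S^1}^*$. The standard short exact sequence of $S^1$-invariant de Rham complexes for the cover $\{U,V\}$ is exact via an $S^1$-invariant partition of unity; tensoring with $\mathbb{R}[u,u^{-1}]]$, which as the field of formal Laurent series in $u^{-1}$ is flat over $\mathbb{R}$, preserves exactness and yields a short exact sequence of complexes with differential $d-ui_X$, giving the desired long exact sequence after passing to cohomology. The delicate point is that the homotopy $-\alpha\gamma$ lives genuinely in the \emph{completed} complex rather than the polynomial version, which is precisely why the Laurent-series completion in the definition of $h_{S^1}^*$ is essential, especially when $W$ is infinite-dimensional and $(d\theta)^k$ need not vanish for any $k$.
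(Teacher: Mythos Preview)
The paper does not prove this theorem itself; it is quoted from Jones--Petrack \cite{JP}. Your proposal is precisely their argument---vanishing of $h_{S^1}^*$ on fixed-point-free open sets via the contracting homotopy $-u^{-1}\theta\sum_{k\ge 0}(u^{-1}d\theta)^k$ (which is exactly where the completion $[u,u^{-1}]]$ is needed), fed into an equivariant Mayer--Vietoris sequence for the cover $\{U,\,N\setminus N_0\}$---so it is correct and matches the source the paper cites.
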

Finite dimensional manifolds are regular, as is the loop space $LM$ of a finite dimensional manifold $M$.  However, not all infinite dimensional $S^1$-manifolds are regular \cite{JP}.

For our purposes we only consider periodic $S^1$-equivariant cohomology and completed periodic $S^1$-equivariant cohomology, though these cohomology theories can be defined for any torus.  A presentation of the general case can be found in \cite[Ch. 10]{GS}.

\subsection{Super $S^1$-equivariant cohomology} \label{SuperCohomology}

Although we cannot directly compare cohomology classes in $H_{S^1}^*(N)$ and $h_{S^1}^*(N)$, we may compare them in a third $S^1$-equivariant cohomology theory $\bar{h}_{S^1}^*(N)$, defined below.

Given a real parameter $s$, we may take the quotients $\Lambda_{S^1}^*(N)[u, u^{-1}]]/(u - s)$ and $\Lambda_{S^1}^*(N)[u]/(u - s)$.  This has the effect of setting $u = s$.  In \cite[\textsection 5]{AB}, Atiyah and Bott prove 
\begin{align}\label{ABLemma}
H^*(\Lambda_{S^1}^*(N)[u]/(u - s), d - si_X) &\cong H^*( \Lambda^*_{S^1}(N)[u], d - ui_X)/(u - s),\\
H^*(\Lambda_{S^1}^*(N)[u, u^{-1}]]/(u - s), d - si_X) &\cong H^*( \Lambda^*_{S^1}(N)[u, u^{-1}]], d - ui_X)/(u - s), \nonumber
\end{align}

so that we may interchange setting $u = s$ and taking the cohomology of our complex.  We may represent elements of these cohomology groups with differential forms because of

\begin{proposition}\label{EvaluateU}
Every element of $\Lambda^*_{S^1}(N)[u]/(u + 1)$ has a unique representative that is purely a differential form (an element in $\Lambda_{S^1}^*(N)[u]$ that depends on $u$ as a degree zero polynomial).  The same holds for every element of $\Lambda_{S^1}^*(N)[u, u^{-1}]]/(u + 1)$.
\end{proposition}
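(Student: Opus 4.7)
My plan is to reduce the proposition to showing that the natural composition
$$\iota: \Lambda^*_{S^1}(N) \hookrightarrow \Lambda^*_{S^1}(N)[u] \twoheadrightarrow \Lambda^*_{S^1}(N)[u]/(u+1)$$
is a bijection: existence of a pure-form representative is surjectivity of $\iota$, uniqueness is injectivity. I will then treat the Laurent case by the parallel argument applied to the completion $\Lambda^*_{S^1}(N)[u,u^{-1}]]$.

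For surjectivity, I would use the elementary factorization $u^k - (-1)^k = (u+1)(u^{k-1} - u^{k-2} + \cdots + (-1)^{k-1})$, valid for $k \geq 1$. Summed against $p(u) = \sum_{k=0}^n a_k u^k$, this yields $p(u) = p(-1) + (u+1) q(u)$ with $q \in \Lambda^*_{S^1}(N)[u]$, so the class $[p(u)]$ is represented by the pure form $p(-1) = \sum_k (-1)^k a_k$. For injectivity, I would suppose a pure form $\alpha$ satisfies $\alpha = (u+1) q(u)$ with $q(u) = \sum_{k=0}^n q_k u^k$ and expand the product to
$$\alpha = q_0 + \sum_{k=1}^n (q_{k-1}+q_k)\,u^k + q_n u^{n+1}.$$
Matching coefficients forces $q_n = 0$ along with the recursion $q_{k-1}+q_k = 0$ for $1 \leq k \leq n$, which propagates downward from $q_n = 0$ to give $q_k = 0$ for every $k$; in particular $\alpha = q_0 = 0$.

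For the Laurent case, surjectivity transfers essentially unchanged: in each fixed total degree of the Cartan complex, the grading relation $\deg(a_k) + 2k = m$ restricts the evaluation $p(-1) = \sum_k (-1)^k a_k$ to finitely many non-zero summands, producing a well-defined pure form. Injectivity is the main obstacle, since the recursion $q_{k-1}+q_k = 0$ no longer terminates automatically and one must rule out formal solutions of the shape $\alpha/(u+1) = \alpha \sum_{k \geq 0}(-u)^k$ or its $u^{-1}$ counterpart. Here I would appeal to the specific convention defining the completion $[u,u^{-1}]]$ used for $h_{S^1}^*(N)$: a completed Laurent series has bounded support on one side of its $u$-expansion, providing a terminal non-zero coefficient from which the recursion propagates just as in the polynomial case to force $\alpha = 0$.
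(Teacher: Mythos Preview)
Your argument is correct and matches the paper's approach: the same factorization $u^k - (-1)^k = (u+1)(\cdots)$ for existence, and your coefficient-matching is exactly the content behind the paper's one-line assertion that $(u+1) \cap \Lambda^*_{S^1}(N) = \{0\}$. In the Laurent case you are in fact more careful than the paper (which simply says the proof goes through unchanged); your only slip is the claim of \emph{finitely many} summands in $p(-1)$ at fixed total degree, which fails for infinite-dimensional $N$ such as $LM$---but the sum remains well-defined there since the $a_k$ lie in pairwise distinct form degrees and the paper takes $\Lambda^*_{S^1}(N) = \prod_n \Lambda^n_{S^1}(N)$.
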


\begin{proof}
Let $\sum_k u^k \alpha_k + (u + 1) \in \Lambda^*_{S^1}(N)[u]/(u + 1)$,  and consider $\sum_k ( -1)^k \alpha_k + (u + 1)$.  Then
\begin{align*}
\sum_k u^k \alpha_k - \sum_k (-1)^k  \alpha_k &= \sum_k (u^k - (-1)^k) \alpha_k\\
&= \sum_k (u + 1)(u^{k - 1} - u^{k - 2} + \ldots + \pm 1)\alpha_k\\
&= (u + 1) \sum_k (u^{k - 1} - u^{k - 2} +  \ldots + \pm 1) \alpha_k,
\end{align*}
which implies that $\sum_k u^k \alpha_k + (u + 1) = \sum_k (-1)^k \alpha_k + (u + 1)$ in $\Lambda_{S^1}^*(N)[u]/(u + 1)$.  By associating the representative $\sum_k (-1)^k \alpha_k$ with the differential form $\sum_k (-1)^k \alpha_k$, we see that we can represent every element of $\Lambda_{S^1}^*(N)[u]/( u + 1)$ by a pure differential form.

Moreover, this representation is unique.  For if $a$ and $b$ are two such representatives, then $a - b \in (u + 1)$.  On the other hand, $a - b \in \Lambda_{S^1}^*(N)$.  Thus $a - b \in (u + 1) \cap \Lambda_{S^1}^*(N) = \{ 0 \}$, proving that $a = b$.

The proof goes through without change in the case of $\Lambda_{S^1}^*(N)[u, u^{-1}]]$.
\end{proof}

There is another $S^1$-equivariant cohomology theory described in \cite{Bis}, \cite{JP}, and \cite{Witten} which we call \emph{super $S^1$-equivariant cohomology}.

\begin{mydef}
Super $S^1$-equivariant cohomology, written $\bar{h}_{S^1}^*(N)$, is the cohomology of the complex $(\Lambda^*_{S^1}(N), d + i_X)$.  It has a $\mathbb{Z}/2\mathbb{Z}$ grading given by the parity of forms.
\end{mydef}

Both $H_{S^1}^*(N)$ and $h_{S^1}^*(N)$ can be mapped into $\bar{h}_{S^1}^*$ by setting $u = -1$.  Let $\{ \Lambda_{S^1}^*(N)[u] \}_{k}$ and $\{ \Lambda_{S^1}^*(N)[u, u^{-1}]] \}_{k}$ denote the degree $k$ subspaces of $\Lambda_{S^1}^*(N)[u]$ and $\Lambda_{S^1}^*(N)[u, u^{-1}]]$ respectively and let
\begin{align*}
&p_k : \{ \Lambda^*_{S^1}(N)[u]\}_{k} \to  \Lambda^*_{S^1}(N)[u]/(u + 1)\\
&q_k : \{ \Lambda^*_{S^1}(N)[u, u^{-1}]]\}_{k} \to \Lambda^*_{S^1}(N)[u, u^{-1}]]/(u + 1)
\end{align*}
denote the two quotient maps.  Interestingly, $q_k$ does not discard information, in the following sense.
\begin{proposition} \label{QIsomorphism}
For any $k$, the maps
\begin{align*}
&q_{2k} : h_{S^1}^{2k}(N) \to \bar{h}_{S^1}^{\rm even}(N)\\
&q_{2k + 1} : h_{S^1}^{2k + 1}(N) \to \bar{h}_{S^1}^{\rm odd}(N)
\end{align*}
are isomorphisms.
\end{proposition}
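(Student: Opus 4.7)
The strategy is to exhibit an explicit chain-level two-sided inverse to $q_k$. By \eqref{ABLemma}, the target $\bar{h}_{S^1}^*(N)$ coincides with the cohomology of the quotient complex $(\Lambda_{S^1}^*(N)[u,u^{-1}]]/(u+1),\, d-ui_X)$, and the pure-form representative furnished by Proposition \ref{EvaluateU} identifies this quotient with $(\Lambda_{S^1}^*(N),\, d+i_X)$, which is by definition the super complex. Under this identification $q_k$ is a chain map out of the degree-$k$ subspace $\{\Lambda_{S^1}^*(N)[u,u^{-1}]]\}_{k}$, so the task reduces to showing it is a chain-level bijection onto the parity-matched half of $\Lambda_{S^1}^*(N)$.

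The guiding observation is a parity constraint. A degree-$k$ element has the shape $\omega = \sum_j u^j \alpha_{k-2j}$, so every form-degree component $\alpha_{k-2j}$ shares the parity $k \bmod 2$. Setting $u = -1$ therefore produces a pure form $\sum_j (-1)^j \alpha_{k-2j}$ of that same parity, showing at once that $q_{2k}$ lands in $\bar{h}_{S^1}^{\mathrm{even}}(N)$ and $q_{2k+1}$ in $\bar{h}_{S^1}^{\mathrm{odd}}(N)$.

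To invert $q_k$, write $k = 2K+\epsilon$ with $\epsilon \in \{0,1\}$, and on the parity-$\epsilon$ subspace of $\Lambda_{S^1}^*(N)$ define
\[
\Psi^{(k)}\Bigl(\sum_m \beta_{2m+\epsilon}\Bigr) = \sum_m (-1)^{K-m}\, u^{K-m}\, \beta_{2m+\epsilon}.
\]
Each summand has bi-degree $2(K-m) + (2m+\epsilon) = k$, so $\Psi^{(k)}$ lands in the correct graded piece. Modulo $(u+1)$ the factor $u^{K-m}$ becomes $(-1)^{K-m}$, so the signs cancel and $q_k \circ \Psi^{(k)} = \mathrm{id}$. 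Plugging $\omega = \sum_j u^j \alpha_{k-2j}$ into $\Psi^{(k)} \circ q_k$ and invoking the uniqueness clause of Proposition \ref{EvaluateU} gives $\Psi^{(k)} \circ q_k = \mathrm{id}$ as well.

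It remains to verify that $\Psi^{(k)}$ intertwines the differentials, i.e.\ $(d-ui_X)\Psi^{(k)}(\beta) = \Psi^{(k+1)}((d+i_X)\beta)$. Applying $d-ui_X$ termwise to $\Psi^{(k)}(\beta)$ and reindexing the $i_X$-sum by $m \mapsto m+1$ collects both pieces into
\[
\sum_m (-1)^{K-m}\, u^{K-m}\, \bigl(d\beta_{2m+\epsilon} + i_X \beta_{2m+2+\epsilon}\bigr),
\]
whose parenthesized factor is exactly the form-degree $(2m+\epsilon+1)$ component of $(d+i_X)\beta$; matching this against the definition of $\Psi^{(k+1)}$ yields the desired identity. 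Consequently $q_k$ and $\Psi^{(k)}$ are mutually inverse chain maps and descend to inverse isomorphisms on cohomology. The only real obstacle is keeping the signs and index shifts straight through the $d-ui_X$ computation; once the parity observation pins down the target component, the formula for $\Psi^{(k)}$ is essentially forced.
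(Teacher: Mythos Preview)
Your proof is correct and follows essentially the same route as the paper: both construct an explicit chain-level inverse to $q_k$ by reinserting powers of $u$ according to form-degree, verify it is a two-sided inverse, and check the intertwining relation with the differentials. Your map $\Psi^{(k)}$ is precisely the paper's $r_k$ up to the sign factor $(-1)^{K-m}$; in fact your careful tracking of this sign (coming from $u \equiv -1$) is more accurate than the paper's computation, which silently drops it.
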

\begin{proof}
We define a map $r_{2k}$ inverse to $q_{2k} : \{ \Lambda_{S^1}^*[u, u^{-1}]] \}_{2k} \to \Lambda^{\rm even}_{S^1}(N)$.  Recall that $\Lambda_{S^1}^{\rm even}(N) = \prod_{n = 0}^\infty \Lambda_{S^1}^{2n}(N)$, so that an arbitrary $\omega \in \Lambda_{S^1}^{even}(N)$ can be written $\omega = \sum_{n = 0}^\infty \omega_{2n}$.  Set $r_{2k}(\omega) = \sum_{n = 0}^\infty u^{k - n}\omega_{2n}$.  In particular, $\deg r_{2k}(\omega) = 2k$, so $r_{2k} : \Lambda_{S^1}^{\rm even}(N) \to \{ \Lambda_{S^1}^*(N)[u, u^{-1}]]\}_{2k}$.  Similarly, any $\theta \in \Lambda_{S^1}^{\rm odd}(N)$ can be written $\theta = \sum_{n = 0}^\infty \theta_{2n + 1}$.  Define $r_{2k + 1} : \Lambda_{S^1}^{\rm odd}(N) \to \{ \Lambda_{S^1}^*(N)[u, u^{-1}]] \}_{2k + 1}$ by $r_{2k + 1}(\theta) = \sum_{n = 0}^\infty u^{k - n }\theta_{2n +1}$.

Given $\sum_{n = 0}^\infty u^{k - n}\omega_{2n} \in \{ \Lambda_{S^1}^*(N)[u, u^{-1}]] \}_{2k}$, we compute
\begin{align*}
 r_{2k}q_{2k} (\sum_{n = 0}^\infty u^{k - n}\omega_{2n}) &= r_{2k}(\sum_{n = 0}^\infty \omega_{2n}) = \sum_{n = 0}^{\infty} u^{k - n}\omega_{2n}
\end{align*}
Similarly, given $\sum_{n = 0}^\infty \omega_{2n} \in \Lambda_{S^1}^{\rm even}(N)$, we compute
\begin{align*}
q_{2k}r_{2k}( \sum_{n = 0}^\infty \omega_{2n}) = q_{2k}( \sum_{n = 0}^\infty u^{k - n}\omega_{2n} ) = \sum_{n = 0}^\infty \omega_{2n}
\end{align*}
Therefore $r_{2k}$ and $q_{2k}$ are inverses.  A similar computation shows $r_{2k + 1}$ and $q_{2k + 1}$ are inverses.

It is straightforward to check that $(d - ui_X)r_{2k} = r_{2k + 1}(d + i_X)$ and $(d + i_X)q_{2k} = q_{2k + 1}(d - ui_X)$, so $r_{2k}$ and $q_{2k}$ descend to isomorphisms on cohomology, and similarly for $r_{2k +1 }$ and $q_{2k + 1}$.
\end{proof}

\subsection{Localization on the fixed-point set} \label{LocalizationThms}

This section is adapted from \cite[\textsection 2]{JP}.  Combining Theorem \ref{PeriodicRestriction} and Proposition \ref{QIsomorphism} we have

\begin{theorem}\label{Localization}
If $N$ is a regular $S^1$-manifold, then the inclusion of the fixed point set $i : N_0 \to N$ induces an isomorphism
\begin{align*}
i^* : \bar{h}_{S^1}^*(N) \cong \bar{h}_{S^1}^*(N_0)
\end{align*}
\end{theorem}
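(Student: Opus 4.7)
The plan is to chain together Theorem \ref{PeriodicRestriction} and Proposition \ref{QIsomorphism} via a commutative square, with the quotient maps $q_k$ bridging completed periodic equivariant cohomology and super equivariant cohomology. Fix $k = 0$ or $k = 1$, depending on parity, and consider the diagram
\begin{equation*}
\begin{array}{ccc}
h_{S^1}^{k}(N) & \xrightarrow{i^*} & h_{S^1}^{k}(N_0) \\
\downarrow q_k & & \downarrow q_k \\
\bar{h}_{S^1}^{k\bmod 2}(N) & \xrightarrow{i^*} & \bar{h}_{S^1}^{k\bmod 2}(N_0).
\end{array}
\end{equation*}
By Theorem \ref{PeriodicRestriction} the top horizontal arrow is an isomorphism, and by Proposition \ref{QIsomorphism} both vertical arrows are isomorphisms. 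If the square commutes, then the bottom arrow is forced to be an isomorphism as well, which is precisely the claim.

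First I would define the bottom $i^*$ carefully: pullback of $S^1$-invariant differential forms along the $S^1$-equivariant inclusion $i : N_0 \hookrightarrow N$ intertwines $d + i_X$ with its counterpart on $N_0$ (since $X$ vanishes on $N_0$, $i^*i_X \omega = i_{X|_{N_0}} i^* \omega = 0$, which is consistent), hence descends to a map $i^*: \bar{h}_{S^1}^*(N) \to \bar{h}_{S^1}^*(N_0)$. Likewise the top $i^*$ is pullback on the coefficients of $u$, and both versions of $i^*$ act only on the form factor while $q_k$ acts only by collapsing the polynomial factor in $u$ (setting $u = -1$ via Proposition \ref{EvaluateU}). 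These two operations act on independent tensor factors, so the square commutes on the chain level and hence on cohomology.

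The only mildly subtle point is matching gradings: $h_{S^1}^*$ carries a $\mathbb{Z}$-grading that becomes $\mathbb{Z}/2$ after applying $q_k$, so I need to run the diagram once with $k=0$ (even case) and once with $k=1$ (odd case) to cover $\bar{h}_{S^1}^{\rm even}$ and $\bar{h}_{S^1}^{\rm odd}$ separately. In both cases Proposition \ref{QIsomorphism} supplies the vertical isomorphism. The main (minor) obstacle is purely bookkeeping---ensuring that the $i^*$ on $\bar{h}_{S^1}^*$ really is well-defined as a chain map for $(d + i_X)$ and that it agrees with the quotient of the $i^*$ on the periodic complex---but no new ideas beyond naturality are required. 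With this verified, the theorem follows immediately from the two-out-of-three property in the commutative square.
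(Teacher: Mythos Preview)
Your proof is correct and follows exactly the approach indicated in the paper, which simply states ``Combining Theorem \ref{PeriodicRestriction} and Proposition \ref{QIsomorphism} we have'' before stating the theorem. You have merely spelled out the commutative-square argument and the grading bookkeeping that the paper leaves implicit.
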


\begin{remark}\label{LocalizationRemark}
Note that on $N_0$, $d + i_X = d$, the de Rham differential, because $X$ vanishes on $N_0$.  Moreover, $\Lambda_{S^1}^*(N_0) = \Lambda^*(N_0)$, as the circle action is trivial.  Therefore $\bar{h}_{S^1}^*(N_0) = H^{even/odd}(N)$, the de Rham cohomology of $N_0$, $\mathbb{Z}/2\mathbb{Z}$-graded by parity of forms.
\end{remark}

In particular, suppose $N = LM$ with the $S^1$-action given by rotation of loops. Then $N_0 = M$, embedded as the subspace of constant loops.

\begin{corollary}
$i^* : \bar{h}_{S^1}^{*}(LM) \cong \bar{h}_{S^1}^*(M) \cong H^{\rm even/odd}(M)$.
\end{corollary}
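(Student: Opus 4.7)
The plan is to deduce this corollary directly by specializing Theorem \ref{Localization} to the case $N = LM$ with the $S^1$-action given by rotation of loops, and then applying the identification of $\bar{h}_{S^1}^*(N_0)$ with ordinary de Rham cohomology described in Remark \ref{LocalizationRemark}.

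First I would identify the fixed-point set of the rotation action. A loop $\gamma : S^1 \to M$ is fixed by every rotation precisely when $\gamma(\theta + t) = \gamma(\theta)$ for all $t \in S^1$, which means $\gamma$ is constant. Hence $N_0 = M$, embedded in $LM$ as the subspace of constant loops. Next I would invoke the fact, already asserted in the paragraph following Theorem \ref{PeriodicRestriction}, that $LM$ is a regular $S^1$-manifold, so that the hypotheses of Theorem \ref{Localization} are satisfied.

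Applying Theorem \ref{Localization} then yields the first isomorphism $i^* : \bar{h}_{S^1}^*(LM) \cong \bar{h}_{S^1}^*(M)$. For the second, I would quote Remark \ref{LocalizationRemark}: since the $S^1$-action on the fixed-point set is trivial, the generating vector field $X$ vanishes on $M$, the differential $d + i_X$ reduces to the de Rham differential $d$, and $\Lambda_{S^1}^*(M) = \Lambda^*(M)$. Therefore $\bar{h}_{S^1}^*(M) \cong H^{\rm even/odd}(M)$, completing the chain of isomorphisms.

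No serious obstacle is expected since all the work has been done in the preceding theorem and remark; the only real content is verifying that $LM$ is regular and that its fixed-point set is $M$, both of which are standard. The proof is essentially a one-line citation of Theorem \ref{Localization} and Remark \ref{LocalizationRemark} applied to this specific example.
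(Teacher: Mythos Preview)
Your proposal is correct and matches the paper's approach exactly: the paper states the corollary immediately after identifying $N_0 = M$ for $N = LM$ and provides no separate proof, relying on Theorem \ref{Localization} and Remark \ref{LocalizationRemark} just as you do. The only ingredient you make explicit that the paper leaves implicit is the regularity of $LM$, which is indeed asserted after Theorem \ref{PeriodicRestriction}.
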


Thus a cohomology class in $\bar{h}_{S^1}^*(LM)$ is determined by its restriction to the embedding $M \hookrightarrow LM$.  We summarize these cohomology theories and the maps between them in the following table and diagram.
\bigskip

\begin{center}
 \begin{tabular}{| l | c | c | c | }
   \hline
 &   $H_{S^1}^*(N)$ & $h_{S^1}^*(N)$ & $\bar{h}_{S^1}^*(N)$ \\ \hline
Equivariant Forms &  $\Lambda_{S^1}^*(N)[u]$ & $\Lambda_{S^1}^*(N)[u, u^{-1}]]$ & $\Lambda_{S^1}^*(N)$ \\ \hline
Differential &   $d- ui_X$ & $d - ui_X$ & $d + i_X$ \\
   \hline
Localization &    - & $h_{S^1}^*(N) \cong h_{S^1}^*(N_0)$ & $\bar{h}_{S^1}^*(N) \cong \bar{h}_{S^1}^*(N_0)$ \\
   \hline
References & \cite{BGV, GS} & \cite{GJP, JP} & \cite{Atiyah, AB, Bis}\\
 \hline
 \end{tabular}
\end{center}
\bigskip

\begin{center}
\begin{tikzcd}
H_{S^1}^k(N)
\arrow{rd}{p_k}
&\hphantom{B} 
&h_{S^1}^k(N) \arrow{ld}[swap]{q_k}\\
&\bar{h}_{S^1}^{\rm even/odd}(N)
\end{tikzcd}
\end{center}

\subsection{$S^1$-equivariant vector bundles} \label{GVectorBundle}

 Let $\pi : E \to N$ be a rank $n$ complex vector bundle and suppose that $S^1$ acts on $E$ and $N$ by $\hat{k}_\theta$ and $k_\theta$ respectively such that $\pi \circ \hat{k}_\theta = k_\theta \circ \pi$ and $\hat{k}_\theta$ acts by vector bundle automorphisms.
\begin{align*}
\begin{CD}
E @>\hat{k}_\theta>> E\\
@VV \pi V @VV \pi V\\
N @>k_\theta>> N
\end{CD}
\end{align*}
We call $E \to N$ a \emph{$S^1$-equivariant vector bundle}.  $S^1$ acts on $\Gamma(E \to M)$ by
\begin{align*}
(k_\theta^{\Gamma}s)(x) \overset{\rm def}{=} \hat{k}_\theta s(k_{-\theta}x),
\end{align*}
where $s \in \Gamma(E \to N)$ and $x \in N$.  We say a connection $\nabla$ on $E$ is $S^1$-invariant if $\hat{k}_\theta^\Gamma \nabla = \nabla \hat{k}_\theta^\Gamma$.  Following \cite[Ch. 1]{BGV}, we may average a given connection $\nabla$ by the $S^1$-action,
\begin{align*}
\nabla^{ave} = \int_S^1 (k_{-\theta})^{\Gamma \otimes T^*M} \, \nabla \, k_\theta^\Gamma \, d\theta.
\end{align*}
That is, if $Y \in T_xN$,
\begin{align*}
\nabla^{ave}_Ys &= \int_{S^1} (k_{-\theta})^\Gamma \, \nabla_{{k_\theta}_*Y} \,  k_\theta^\Gamma s \, d\theta.
\end{align*}
It is straightforward to check that $k_\theta^\Gamma \nabla^{ave} = \nabla^{ave} k_\theta^\Gamma$, so $\nabla^{ave}$ is $S^1$-invariant.  Thus we may assume without loss of generality that our connection is $S^1$-invariant.

With our $S^1$-invariant connection $\nabla^{ave}$ we can define the $S^1$-equivariant curvature 2-form, an extension of the ordinary curvature 2-form to the Cartan model.  Let $X$ be the vector field on $N$ induced by the circle action.  We have the interior multiplication operator $i_X$ and the Lie derivative $\mathcal{L}_X$, related by Cartan's formula $\mathcal{L}_X = di_X + i_Xd$.  We first define the \emph{$S^1$-equivariant connection}, $\nabla^{S^1} \overset{\rm def}{=} \nabla^{ave} - ui_{X}$.  The \emph{$S^1$-equivariant curvature 2-form} is 
\begin{align*}
\Omega^{S^1} &\overset{\rm def}{=} (\nabla^{S^1})^2 + u\mathcal{L}_{X}.
\end{align*}
It is shown in \cite[Ch. 7]{BGV} that $\Omega^{S^1}$ belongs to $\Lambda_{S^1}^*(N, {\rm End}(E))[u]$, the Cartan model of forms on $N$ valued in ${\rm End}(E)$, and it has equivariant degree 2.  With the $S^1$-equivariant curvature 2-form, the techniques of Chern-Weil theory extend to the equivariant set-up and can be used to define \emph{$S^1$-equivariant characteristic classes}.  In Section \ref{LoopSpaceConstructions} we will be primarily interested in the \emph{$S^1$-equivariant Chern character},
\begin{align*}
{\rm ch}^{S^1}(E) &\overset{\rm def}{=} {\rm Tr}\exp \Omega^{S^1},
\end{align*}
an equivariantly closed form of mixed even degree.

\subsection{$S^1$-equivariant principal bundles} \label{GPrincipalBundle}

An alternative construction of $S^1$-equivariant characteristic classes uses $S^1$-equivariant principal bundles.  Throughout this section we follow \cite{BTChar}.  A principal $U(n)$-bundle $P$ is $S^1$-equivariant if $S^1$ acts on $P$ and $N$ on the left such that the projection $\pi : P \to N$ is $S^1$-equivariant and the left $S^1$-action commutes with the right $U(n)$-action.  Suppose we have the same set-up of a $S^1$-equivariant vector bundle $E \to N$ as in Section \ref{GVectorBundle}.  Without loss of generality we may assume that $\hat{k}_\theta : E \to E$ is a unitary transformation (for an arbitary metric on $E$ can be averaged by the $S^1$-action to produce a $S^1$-invariant metric).  Then $\hat{k}_\theta$ defines an action on the unitary frame bundle $FrE$,
\begin{align*}
\tilde{k}_\theta(x, e_1, \ldots, e_n) = (k_\theta x, \hat{k}_\theta e_1, \ldots, \hat{k}_\theta e_n).
\end{align*}
Moreover, the left $S^1$-action and the right $U(n)$-action commute.  A crucial point in Section \ref{LoopSpaceConstructions} is that the structure group action and the $S^1$-action do not commute for the infinite rank bundles we consider, in contrast to this finite rank case.  For that reason we now prove that the actions commute.  If $\theta \in S^1$ and $a \in U(n)$,
\begin{align*}
R_a[ \tilde{k}_\theta (x, e_1, \ldots, e_n)] &= R_a(k_\theta x, \hat{k}_\theta e_1, \ldots, \hat{k}_\theta e_n) = (k_\theta x, (\hat{k}_\theta e_1)a, \ldots, (\hat{k}_\theta e_n)a)\\
&= (k_\theta x, (\hat{k}_\theta e_j)a_1^{j}, \ldots, (\hat{k}_\theta e_j)a_n^j) = (k_\theta x, \hat{k}_\theta (e_ja_1^j), \ldots, \hat{k}_\theta (e_j a_n^j) )\\
&= \tilde{k}_\theta (x, e_j a_1^j, \ldots, e_j a_n^j) = \tilde{k}_\theta [ R_a(x, e_1, \ldots, e_n) ],
\end{align*}
proving $R_a \circ \tilde{k}_\theta = \tilde{k}_\theta \circ R_a$.   Thus $FrE \to N$ is an $S^1$-equivariant principal $U(n)$-bundle.
\begin{align*}
\begin{CD}
FrE @>\tilde{k}_\theta>> FrE\\
@VV V @VV V\\
N @>k_\theta>> N
\end{CD}
\end{align*}

Let $\omega \in \Lambda^1(FrE, \mathfrak{u}(n)  )$ be a connection 1-form.  We will show that $\omega^{ave} \overset{\rm def}{=} \int_{S^1} (\tilde{k}_\theta^*\omega)\, d\theta$ is an $S^1$-invariant connection 1-form, following Lemmas \ref{FundamentalVF} and \ref{PullbackConnection}.  

\begin{lemma}\label{FundamentalVF}
Let $A \in \mathfrak{u}(n)$ and let $A^*$ be its fundamental vector field.  Then $ \tilde{k}_{\theta*}(A^*|_{p}) = A^*|_{\tilde{k}_\theta p}$, for $p \in FrE$.
\end{lemma}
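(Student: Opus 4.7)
The plan is to unwind the definition of the fundamental vector field and apply the commutativity $R_a \circ \tilde{k}_\theta = \tilde{k}_\theta \circ R_a$ that was established immediately before the lemma statement. Recall that for $A \in \mathfrak{u}(n)$, the fundamental vector field at $p \in FrE$ is
\begin{align*}
A^*|_p = \frac{d}{dt}\bigg|_{t=0} R_{\exp(tA)}(p) = \frac{d}{dt}\bigg|_{t=0} p \cdot \exp(tA).
\end{align*}

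First, I would push this forward by $\tilde{k}_\theta$ using naturality of the pushforward with respect to smooth curves: since $\tilde{k}_\theta$ is smooth, it exchanges with the derivative along the curve $t \mapsto R_{\exp(tA)}(p)$, giving
\begin{align*}
\tilde{k}_{\theta*}(A^*|_p) = \frac{d}{dt}\bigg|_{t=0} \tilde{k}_\theta\bigl( R_{\exp(tA)}(p)\bigr).
\end{align*}
Next, I would invoke the relation $\tilde{k}_\theta \circ R_{\exp(tA)} = R_{\exp(tA)} \circ \tilde{k}_\theta$, proven just above, to rewrite this as
\begin{align*}
\frac{d}{dt}\bigg|_{t=0} R_{\exp(tA)}\bigl(\tilde{k}_\theta p\bigr),
\end{align*}
which by definition is exactly $A^*|_{\tilde{k}_\theta p}$.

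There is no real obstacle here; the content of the lemma is precisely the infinitesimal form of the commutativity between the $S^1$-action and the principal $U(n)$-action. The only subtlety worth noting is that one must apply the commutativity to the one-parameter subgroup $\exp(tA)$ rather than to an arbitrary element $a$, but since the commutativity holds for all $a \in U(n)$, this is automatic. The lemma will then be used in the proof of Lemma \ref{PullbackConnection} to verify that $\tilde{k}_\theta^*\omega$ behaves correctly on fundamental vector fields so that the averaged form $\omega^{\rm ave}$ still satisfies the axiom $\omega^{\rm ave}(A^*) = A$.
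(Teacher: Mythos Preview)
Your proof is correct and is essentially identical to the paper's own argument: both compute $\tilde{k}_{\theta*}(A^*|_p)$ by differentiating the curve $t \mapsto \tilde{k}_\theta(p\exp(tA))$, apply the commutativity $\tilde{k}_\theta \circ R_a = R_a \circ \tilde{k}_\theta$, and recognize the result as $A^*|_{\tilde{k}_\theta p}$. The paper's version is simply more terse, compressing your three displayed steps into a single line.
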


\begin{proof}
\begin{align*}
\tilde{k}_{\theta *} (A^*|_p) &= \frac{d}{dt}\bigg|_{t = 0} \tilde{k}_\theta( p \exp(tA) ) = \frac{d}{dt}\bigg|_{t = 0} (\tilde{k}_\theta p)\exp(tA) = A^*|_{\tilde{k}_\theta p}.
\end{align*}
\end{proof}

\begin{lemma}\label{PullbackConnection}
$\tilde{k}_\theta^*\omega$ is a connection 1-form.
\end{lemma}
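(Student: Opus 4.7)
The plan is to verify the two defining conditions of a connection 1-form for $\tilde{k}_\theta^*\omega$: namely, that it reproduces the Lie algebra element on its fundamental vector field, and that it transforms by the adjoint representation under the right $U(n)$-action. Both conditions will follow essentially formally from the two facts already established above, namely Lemma \ref{FundamentalVF} (equivariance of fundamental vector fields under $\tilde{k}_\theta$) and the commutation $R_a \circ \tilde{k}_\theta = \tilde{k}_\theta \circ R_a$.

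First, I would check the fundamental vector field condition. Given $A \in \mathfrak{u}(n)$ and $p \in FrE$, naturality of pullback and Lemma \ref{FundamentalVF} give
\begin{align*}
(\tilde{k}_\theta^*\omega)(A^*|_p) = \omega\bigl(\tilde{k}_{\theta*}(A^*|_p)\bigr) = \omega(A^*|_{\tilde{k}_\theta p}) = A,
\end{align*}
since $\omega$ itself is a connection 1-form.

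Second, I would verify the equivariance condition. Using the commutation of the $S^1$-action and the $U(n)$-action established earlier,
\begin{align*}
R_a^*(\tilde{k}_\theta^*\omega) = (\tilde{k}_\theta \circ R_a)^*\omega = (R_a \circ \tilde{k}_\theta)^*\omega = \tilde{k}_\theta^*(R_a^*\omega) = \tilde{k}_\theta^*(\mathrm{Ad}(a^{-1})\omega) = \mathrm{Ad}(a^{-1})(\tilde{k}_\theta^*\omega),
\end{align*}
where the second-to-last equality uses that $\omega$ is a connection 1-form, and the last equality holds because $\mathrm{Ad}(a^{-1})$ acts pointwise on the $\mathfrak{u}(n)$-values and so commutes with pullback by $\tilde{k}_\theta$.

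There is no real obstacle here; the only subtlety worth flagging is that the argument genuinely depends on $R_a \circ \tilde{k}_\theta = \tilde{k}_\theta \circ R_a$, which is precisely the feature that will fail in the loop space setting of Section \ref{LoopSpaceConstructions}. So the short proof above doubles as a reminder of exactly where the finite-dimensional argument breaks down for $LFrE$.
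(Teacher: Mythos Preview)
Your proof is correct and follows essentially the same approach as the paper: both verify the two connection axioms using Lemma~\ref{FundamentalVF} for the fundamental vector field condition and the commutation $R_a \circ \tilde{k}_\theta = \tilde{k}_\theta \circ R_a$ for the equivariance condition. The only cosmetic difference is that the paper writes the second verification pointwise by chasing a vector through the pushforwards, whereas you phrase it via functoriality of pullback; the content is identical.
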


\begin{proof}
We must show 1.) $\tilde{k}_\theta^*\omega(A^*) = A$ for $A \in \mathfrak{u}(n)$, and 2.) $R_a^* (\tilde{k}_\theta^*\omega) = Ad_{a^{-1}}(\tilde{k}_\theta^*\omega)$ for $a \in U(n)$.  To verify 1.), we compute
\begin{align*}
(\tilde{k}_\theta^*\omega)_p(A^*) &= \omega_{\tilde{k}_\theta p}(\tilde{k}_{\theta *} (A^*|_p) ) = \omega_{\tilde{k}_\theta p}( A^*|_{\tilde{k}_\theta p} ) = A.
\end{align*}
To verify 2.), note that commutativity of the group actions implies ${R_a}_* \tilde{k}_{\theta *} = \tilde{k}_{\theta *} {R_a}_*$.  We compute
\begin{align*}
R_a^*(\tilde{k}_\theta^*\omega)(X) &= \omega(\tilde{k}_{\theta *} {R_a}_* X) = \omega( {R_a}_* \tilde{k}_{\theta *} X)\\
= R_a^*\omega( \tilde{k}_{\theta *}X) &= Ad_{a^{-1}} \omega(\tilde{k}_{\theta *}X) = Ad_{a^{-1}} (\tilde{k}_\theta^*\omega)(X), 
\end{align*}
proving 2.).  Therefore $\tilde{k}_\theta^*\omega$ is a connection 1-form.
\end{proof}

With these lemmas, the following proposition shows that we may average a connection to produce an $S^1$-invariant connection.  In contrast to this finite dimensional case, we see in Section \ref{LoopSpaceConstructions} that the same construction fails for the $S^1$-action on loop space.

\begin{proposition}\label{AverageConnection}
$\omega^{ave} = \int_{S^1} \tilde{k}_\theta^*\omega \, d\theta$ is a $S^1$-invariant connection 1-form.
\end{proposition}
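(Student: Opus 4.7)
The plan is to verify the two defining properties of a connection 1-form for $\omega^{ave}$ and then verify $S^1$-invariance separately, leveraging Lemma \ref{PullbackConnection} so that the bulk of the work is already done pointwise in $\theta$.

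First I would show $\omega^{ave}$ is a connection 1-form by pulling the two defining identities under the integral. By Lemma \ref{PullbackConnection}, each $\tilde{k}_\theta^*\omega$ is a connection 1-form, so for any $A \in \mathfrak{u}(n)$ one has $\omega^{ave}(A^*) = \int_{S^1} (\tilde{k}_\theta^*\omega)(A^*)\, d\theta = \int_{S^1} A\, d\theta = A$, where $d\theta$ is taken to be the normalized Haar measure on $S^1$. Similarly, $R_a^*\omega^{ave} = \int_{S^1} R_a^*(\tilde{k}_\theta^*\omega)\, d\theta = \int_{S^1} \mathrm{Ad}_{a^{-1}}(\tilde{k}_\theta^*\omega)\, d\theta = \mathrm{Ad}_{a^{-1}}\omega^{ave}$, since $\mathrm{Ad}_{a^{-1}}$ is linear and commutes with the integral over $\theta$. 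These are genuinely the key facts: the space of connection 1-forms is an affine space modeled on the horizontal $\mathrm{Ad}$-equivariant forms, so any integral average of connection 1-forms is again one.

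Next I would establish $S^1$-invariance by a change of variables in the defining integral. For $\phi \in S^1$, using the group law $\tilde{k}_\theta\circ \tilde{k}_\phi = \tilde{k}_{\theta+\phi}$ and pulling the pullback inside the integral gives
\begin{align*}
\tilde{k}_\phi^*\omega^{ave} &= \int_{S^1}\tilde{k}_\phi^*\tilde{k}_\theta^*\omega\, d\theta = \int_{S^1}(\tilde{k}_\theta\circ\tilde{k}_\phi)^*\omega\, d\theta = \int_{S^1}\tilde{k}_{\theta+\phi}^*\omega\, d\theta.
\end{align*}
The substitution $\theta'=\theta+\phi$ together with translation invariance of Haar measure on $S^1$ then returns $\int_{S^1}\tilde{k}_{\theta'}^*\omega\, d\theta' = \omega^{ave}$, proving invariance.

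I do not anticipate a serious obstacle: once Lemma \ref{PullbackConnection} is in hand, both claims reduce to interchanging a linear operation (pullback by a diffeomorphism, $\mathrm{Ad}_{a^{-1}}$, or evaluation on a fixed vector field) with the integral over the compact group $S^1$, which is justified by smoothness of the $S^1$-action and compactness of the orbit. The only subtlety worth flagging is the normalization convention for $d\theta$ so that $\int_{S^1} d\theta = 1$, which is needed for the identity $\omega^{ave}(A^*)=A$; this is implicit in \cite{BTChar} but should be stated for clarity.
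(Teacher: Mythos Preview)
Your proof is correct and follows essentially the same approach as the paper: verify the two connection axioms for $\omega^{ave}$ by pulling them under the integral using Lemma \ref{PullbackConnection}, then establish $S^1$-invariance via the change of variables $\theta' = \theta + \phi$ and translation invariance of Haar measure. Your additional remarks about the affine structure of the space of connections and the normalization of $d\theta$ are helpful context but not part of the paper's argument.
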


\begin{proof}
First we must show that $\omega^{ave}$ is a connection 1-form.  We must check 1.) $\omega^{ave}(A^*) = A$ for $A \in \mathfrak{u}(n)$, and 2.) $R_a^* \omega^{ave} = Ad_{a^{-1}}\omega^{ave}$.  To verify 1.), we compute
\begin{align*}
\omega^{ave}(A^*) &= \int_{S^1} (\tilde{k}_\theta^*\omega)(A^*)\, d\theta = \int_{S^1} A \, d\theta = A.
\end{align*}
To verify 2.), we compute
\begin{align*}
R_a^* \omega^{ave}(Y) &= R_a^* \int_{S^1} \tilde{k}_\theta^*\omega(Y) d\theta = \int_{S^1} R_a^*(\tilde{k}_\theta^*\omega)(Y) d\theta\\
&= \int_{S^1} Ad_{a^{-1}}(\tilde{k}_\theta^*\omega)(Y) d\theta = Ad_{a^{-1}} \int_{S^1} \tilde{k}_\theta^*\omega(Y) d\theta = Ad_{a^{-1}} \omega^{ave}(Y).
\end{align*}
Therefore $\omega^{ave}$ is a connection 1-form.  Moreover, for a fixed $\theta_0 \in S^1$,
\begin{align*}
\tilde{k}_{\theta_0}^*\omega^{ave}(X) &= \int_{S^1} \tilde{k}_{\theta_0}^*\tilde{k}_\theta^*\omega(X) d\theta = \int_G \tilde{k}_{\theta'}^*\omega(X) d\theta' = \omega^{ave}(X),
\end{align*}
where $\theta' = \theta + \theta_0$, proving that $\omega^{ave}$ is $S^1$-invariant.
\end{proof}

The moral of the story is that given an $S^1$-equivariant vector bundle $E \to N$, one can produce an $S^1$-invariant connection 1-form on $FrE \to N$, so we may assume without loss of generality that $\omega$ is $S^1$-invariant.

The \emph{$S^1$-equivariant curvature 2-form} is
\begin{align*}
\Omega^{S^1} &\overset{\rm def}{=} \Omega - u\omega(X)
\end{align*}
where $\Omega$ is the (ordinary) curvature 2-form of the connection $\omega$ and $X$ is the vector field on $FrE$ induced by the $S^1$-action.  It is an equivariant extension of the ordinary curvature 2-form, belonging to $\Lambda_{S^1}^*(FrE, \mathfrak{u}(n))[u]$ with equivariant degree 2.  Letting $D$ denote the covariant exterior derivative associated to the connection $\omega$, we have the equivariant Bianchi identity
\begin{align*}
(D - ui_X) \Omega^{S^1} = 0,
\end{align*}
proven in \cite{BTChar}.  With the equivariant Bianchi identity it is straightforward to check that $f(\Omega^{S^1})$ is equivariantly closed for any $U(n)$-invariant polynomial $f$.  Recall that a form on $FrE$ is \emph{basic} if it is $U(n)$-invariant and horizontal.  Basic forms are precisely those forms that descend to the base, i.e. those forms $\alpha \in \Lambda^*(FrE)$ such that $\alpha = \pi^*\beta$ for some $\beta \in \Lambda^*(N)$.  In Section \ref{LoopSpaceConstructions} we see that the equivariant differential forms we define fail to be basic, so for comparison's sake we now recall the standard proof that $f(\Omega^{S^1})$ is basic.

\begin{proposition}\label{StandardBasic}
If $f$ is a $U(n)$-invariant polynomial, $f(\Omega^{S^1})$ is basic.
\end{proposition}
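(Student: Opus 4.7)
The plan is to verify the two defining properties of a basic form, namely $U(n)$-invariance and horizontality, by checking each property separately for the two summands in $\Omega^{S^1} = \Omega - u\omega(X)$ and then combining them. Because $f$ is a polynomial, I will work at the level of components with respect to a basis $\{T_a\}$ of $\mathfrak{u}(n)$, writing $\Omega^{S^1} = \Omega^{S^1,a}\otimes T_a$, and reduce everything to statements about the scalar-valued forms $\Omega^{S^1,a}$, using that $i_{A^*}$ is a derivation of the algebra of differential forms.

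For $U(n)$-invariance, the classical curvature satisfies $R_a^*\Omega = \mathrm{Ad}_{a^{-1}}\Omega$, so the only new ingredient is understanding how $\omega(X)$ transforms under $R_a$. The key computation is that the commuting $S^1$- and $U(n)$-actions force $R_{a*}X = X$: differentiating $R_a \tilde{k}_\theta = \tilde{k}_\theta R_a$ at $\theta=0$ gives $R_{a*}X|_p = X|_{R_a p}$. Combined with $R_a^*\omega = \mathrm{Ad}_{a^{-1}}\omega$, this yields $R_a^*(\omega(X)) = \mathrm{Ad}_{a^{-1}}\omega(X)$, so $R_a^*\Omega^{S^1} = \mathrm{Ad}_{a^{-1}}\Omega^{S^1}$. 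Since $f$ is $\mathrm{Ad}$-invariant, $f(\Omega^{S^1})$ is $U(n)$-invariant.

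For horizontality, I need $i_{A^*}f(\Omega^{S^1}) = 0$ for every $A \in \mathfrak{u}(n)$, and by the derivation property of $i_{A^*}$ it suffices to show $i_{A^*}\Omega^{S^1,a} = 0$ for each $a$. The component $\Omega^a$ is horizontal by the standard horizontality of the curvature form, while $\omega(X)^a$ is a $0$-form (a function), which $i_{A^*}$ annihilates trivially. Combining invariance and horizontality gives that $f(\Omega^{S^1})$ is basic.

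I do not expect any genuine obstacle in this argument; the only subtlety worth flagging is the interaction of $X$ with $R_a$, which works out cleanly precisely \emph{because} the two group actions commute. This is exactly the point the section emphasizes: in the loop-space situation of Section~\ref{LoopSpaceConstructions}, the analogous commutativity fails, so $\omega(X)$ will no longer transform correctly under the structure group and the horizontality/invariance of the equivariant characteristic forms will break down. The present proposition is therefore essentially a bookkeeping exercise, stated here as a baseline against which the loop-space pathologies can later be measured.
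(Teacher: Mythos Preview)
Your proposal is correct and follows essentially the same route as the paper: verify horizontality and $U(n)$-invariance separately, using that $\omega(X)$ is a $0$-form for the former and that $R_a^*\omega(X) = \mathrm{Ad}_{a^{-1}}\omega(X)$ for the latter. The only difference is that you derive this transformation law directly from the commutativity $R_{a*}X = X$ of the two actions, whereas the paper cites it from \cite{BTChar}; your explicit derivation is in fact exactly the point being emphasized for contrast with the loop-space case.
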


\begin{proof}
We must show that $f(\Omega^{S^1})$ is horitzontal and $U(n)$-invariant.  Suppose $v$ is a vertical vector.  Then $i_v \Omega = i_v \omega(X) = 0$, and
\begin{align*}
i_vf(\Omega^{S^1}) = f(i_v\Omega^{S^1}) = 0,
\end{align*}
proving that $f(\Omega^{S^1})$ is horizontal.  In \cite[\textsection 6]{BTChar}, Bott and Tu show that for $a \in U(n)$,
\begin{align}\label{EquivariantTransformationLaw}
R_a^*\omega(X) = {\rm Ad}_{a^{-1}}\omega(X),
\end{align}
from which it follows that $R_a^*\Omega^{S^1} = {\rm Ad}_{a^{-1}}\Omega^{S^1}$.  Thus
\begin{align}\label{FiniteDimInvariant}
R_a^*f(\Omega^{S^1}) = f(R_a^*\Omega^{S^1}) = f({\rm Ad}_{a^{-1}}\Omega^{S^1}) = f(\Omega^{S^1}),
\end{align}
proving that $f(\Omega^{S^1})$ is $U(n)$-invariant.
\end{proof}

In particular we see that ${\rm Tr}(\Omega^{S^1})^k \in \Lambda_{S^1}^*(FrE)[u]$ is equivariantly closed and basic, implying that ${\rm Tr}\exp \Omega^{S^1} = \pi^*\beta$ for some $\beta \in \Lambda_{S^1}^*(N)[u]$.  In fact, $\beta = {\rm ch}^{S^1}(E)$, as in Section \ref{GVectorBundle}.  In this way one constructs $S^1$-equivariant characteristic classes via $S^1$-equivariant principal $U(n)$-bundles.

\section{Loop space and pushdown bundles}\label{LoopSpaceConstructions}

In this section we attempt to construct $S^1$-equivariant characteristic classes on $LM$ by $S^1$-equivariant Chern-Weil techniques on pushdown bundles and their associated frame bundles.  In Section \ref{PushdownBundles} we introduce the pushdown bundle over $LM$, an $S^1$-equivariant vector bundle of infinite rank.  In Section \ref{CurvatureOperators} we try to apply the $S^1$-equivariant Chern-Weil techniques via a covariant derivative on the pushdown bundle as attempted in \cite[\textsection{3}]{LMRT}.  These techniques fail for subtle reasons not present in the finite dimensional case.  In Section \ref{PrincipalBundleApproach} we attempt to apply $S^1$-equivariant Chern-Weil techniques via the principal $LU(n)$-bundle $LFrE$, and in this set-up it becomes clearer why these techniques fail.

In Section \ref{BismutChernCharacter} we recall the construction of ${\rm BCh}$, a characteristic class on $LM$ constructed by a modification of techniques from Section \ref{GPrincipalBundle} and Section \ref{PrincipalBundleApproach}.  However, ${\rm BCh}$ does not define a class in $H_{S^1}^*(LM)$, but rather a class in $\bar{h}_{S^1}(LM)$. 

\subsection{Pushdown bundle basics}\label{PushdownBundles}

Let $\pi : E \to M$ be a rank $n$ complex vector bundle over $M$, a finite dimensional smooth manifold.  Suppose that $E$ comes equipped with a hermitian metric and a metric connection $\nabla$.  We consider the free loop space $LM = C^\infty(S^1, M)$ with the Fr\'{e}chet topology.  The evaluation map $ev : LM \times S^1 \to M$ is given by $ev(\gamma, \theta) = \gamma(\theta)$.  We form the pullback bundle $ev^* E \to LM \times S^1$, a rank $n$ bundle over $LM \times S^1$.  Letting $\pi_1 : LM \times S^1 \to LM$ denote the projection on the first factor, we form the \emph{pushdown bundle} ${\pi_1}_* ev^* E = \mathcal{E} \to LM$.
\begin{align*}
\begin{CD}
@. ev^*E @> ev^* >> E  \\
@. @VV V @VV V \\
@. LM \times S^1 @> ev >> M \\
@. @VV \pi V\\
\mathcal{E} = {\pi_1}_*ev^*E @> >> LM
\end{CD}
\end{align*}
The pushdown bundle is an infinite rank bundle over $LM$, with fiber
\begin{align*}
\mathcal{E}_{\gamma} = \Gamma(\gamma^*E \to S^1) = \{ s : S^1 \to E : s(\theta) \in E_{\gamma(\theta)} \},
\end{align*}
given the Fr\'{e}chet topology.  In fact, $\mathcal{E}$ is isomorphic to the bundle $\tilde{\pi} : LE \to LM$, where $LE$ is the loop space of $E$ and whose projection is given by $\tilde{\pi}(\gamma) = \pi \circ \gamma$.

A trivialization of $\mathcal{E}$ near $\gamma_0 \in LM$ may be obtained in the following way.  Let $U_\epsilon \subset LM$ be a neighborhood of $\gamma_0$ given by `short curves'
\begin{align*}
U_\epsilon = \{ \gamma : \gamma(\theta) = \exp_{\gamma_0(\theta)}X(\theta), |X(\theta)|<\epsilon \},
\end{align*}
where $\epsilon > 0$ is chosen to be less than the injectivity radius of $\exp_{\gamma(\theta)}$ for all $\theta \in S^1$.  Note
\begin{align*}
\mathcal{E}_{\gamma_0} \overset{{\rm def}}{=} \Gamma( \gamma_0^*E \to S^1) \cong \Gamma( S^1 \times \mathbb{C}^n \to S^1) \cong L\mathbb{C}^n
\end{align*}
where the first isomorphism follows from the fact that any rank $n$ complex vector bundle over $S^1$ is trivial.  Notice, however, that the isomorphism is not canonical as it depends on the trivialization of $\gamma_0^*E \to S^1$.  Suppose $\gamma \in U_\epsilon$.  Then $\gamma(\theta) = \exp_{\gamma_0(\theta)}X(\theta)$ for some vector field $X(\theta)$ along $\gamma_0$, and for each $\theta \in S^1$ there is a curve $c_\theta(t) : [0, 1] \to M$ given by $c_{\theta}(t) = \exp_{\gamma_0(\theta)}tX(\theta)$.  This curve begins at $c_{\theta}(0) = \gamma_0(\theta)$ and ends at $c_{\theta}(1) = \gamma(\theta)$.  Thus we have an isomorphism $\parallel_{c_\theta} : E_{\gamma(\theta)} \to E_{\gamma_0(\theta)}$ given by parallel translation `backwards' along $c_{\theta}$.

We define an isomorphism $T_{\gamma} : \mathcal{E}_{\gamma} \to \mathcal{E}_{\gamma_0}$ in the following way.  Given $s(\theta) \in \mathcal{E}_{\gamma}$, we set
\begin{align*}
T_{\gamma}s(\theta) = \parallel_{c_\theta} s(\theta).
 \end{align*}
Thus $T$ defines a local trivialization, $T : \mathcal{E}|_{U_\epsilon} \to U_\epsilon \times \mathcal{E}_{\gamma_0}$, given by $(\gamma, s) \mapsto (\gamma, T_{\gamma}s )$.

\begin{remark}
Because $\mathcal{E}_{\gamma_0} \cong L\mathbb{C}^n$ (non-canonically), we may define a trivialization $\mathcal{E}|_{U_\epsilon} \to U_\epsilon \times L\mathbb{C}^n$.
\end{remark}

Next we examine the transition functions between trivializations.  Suppose $\gamma_1 \in LM$ and $V_\delta \subset LM$ is a neighborhood of $\gamma_1$ given by `short curves.'  We may define another trivialization $\widehat{T} : \mathcal{E}|_{V_\delta} \to V_\delta \times \mathcal{E}_{\gamma_1}$ in the same way.  That is, given $\gamma \in V_\delta$, for each $\theta \in S^1$ there is a curve $d_\theta(t) = \exp_{\gamma_1(\theta)} tY(\theta)$ beginning at $d_\theta(0) = \gamma_1(\theta)$ and ending at $d_\theta(1) = \gamma(\theta)$.  Then we define $\widehat{T}_\gamma$ as above.  Suppose further that $U_\epsilon \cap V_\delta \neq \emptyset$ and we have the trivializations
\begin{align*}
U_\epsilon \cap V_\delta \times \mathcal{E}_{\gamma_1} \overset{\widehat{T}}{\longleftarrow} \mathcal{E}|_{U_\epsilon \cap V_\delta} \overset{T}{\to} U_\epsilon \cap V_\delta \times \mathcal{E}_{\gamma_0}.
\end{align*}
Consider the transition function $T \circ \widehat{T}^{-1}$.  If $s(\theta) \in \mathcal{E}_{\gamma_1}$, 
\begin{align*}
T \circ \widehat{T}^{-1}s(\theta) &= T \parallel_{d_\theta}^{-1} s(\theta) = \parallel_{c_\theta} \parallel_{d_\theta}^{-1}s(\theta).
\end{align*}
Notice that $\parallel_{c_\theta} \parallel_{d_\theta}^{-1} : E_{\gamma_1(\theta)} \to E_{\gamma_0(\theta)}$ is an isometry of finite dimensional vector spaces.

Suppose we take a frame $\{ e_i(\theta)\}$ for $\gamma_0^*E \to S^1$ and a frame $\{ \tilde{e}_i(\theta) \}$ for $\gamma_1^*E \to S^1$.  For each $\theta \in S^1$, $\parallel_{c_\theta} \parallel_{d_\theta}^{-1} \tilde{e}_i(\theta) = u_i^j(\theta) e_j(\theta)$ where $u_i^j(\theta) \in \mathbb{C}$.  Moreover, because $\parallel_{c_\theta} \parallel_{d_\theta}^{-1}$ is an isometry, $u(\theta) = (u_i^j(\theta)) \in U(n)$ for each $\theta$.  The parallel translation operators $\parallel_{c_\theta}$ and $\parallel_{d_\theta}$ depend smoothly on $\theta$, implying that $u(\theta) \in LU(n)$.  Using the isomorphism $\mathcal{E}_{\gamma_i} \cong L\mathbb{C}^n$ ($i = 0, 1$) determined by our local frames, we may write $T \circ \widehat{T}^{-1} : L\mathbb{C}^n \to L\mathbb{C}^n$,
\begin{align*}
T \circ \widehat{T}^{-1}(f^1(\theta), \ldots, f^n(\theta)) \mapsto (u^1_j(\theta)f^j(\theta), \ldots, u^n_j(\theta)f^j(\theta) ).
\end{align*}
Written in vector notation $f(\theta) = (f^1(\theta), \ldots, f^n(\theta))$,
\begin{align*}
T \circ \widehat{T}^{-1} f(\theta) &= u(\theta) f(\theta).
\end{align*}
In particular we have shown that $\mathcal{E}$ admits the structure group $LU(n)$.  Because the natural representation of $LU(n)$ on  $L\mathbb{C}^n$ is smooth with respect to the Fr\'{e}chet topology \cite[\textsection 2, Theorem 2.3.3]{Hamilton}, we see that $\mathcal{E}$ is a Fr\'{e}chet bundle.

\subsection{$S^1$-equivariant curvature operators}\label{CurvatureOperators}

In this section we construct the $S^1$-equivariant curvature operator via a covariant derivative on $\mathcal{E}$.  However, we see that the $S^1$-equivariant Chern-Weil techniques outlined in Section \ref{GVectorBundle} fail to define $S^1$-equivariant characteristic classes on $LM$.

There is a natural isomorphism $\psi : \Gamma(\mathcal{E} \to LM) \to \Gamma( ev^*E \to LM \times S^1)$ given by $\psi(s) = \tilde{s}$, where $s$ and $\tilde{s}$ are related by
\begin{align*}
\tilde{s}(\gamma, \theta) = s(\gamma)(\theta).
\end{align*}
Under this isomorphism, an operator $D$ on $\Gamma(\mathcal{E} \to LM)$ is associated to an operator $\tilde{D}$ on $\Gamma( ev^*E \to LM \times S^1)$.
\begin{align*}
\begin{CD}
\Gamma(\mathcal{E} \to LM) @> D >> \Gamma(\mathcal{E} \to LM)\\
@VV \psi V @VV \psi V\\
\Gamma( ev^*E \to LM \times S^1) @> \tilde{D} >> \Gamma(ev^*E \to LM \times S^1)
\end{CD}
\end{align*}
We call an operator $D$ on $\Gamma(\mathcal{E} \to LM)$ a \emph{pointwise endomorphism} if for all $\gamma \in LM$ there is a bundle endomorphism $D_\gamma \in {\rm End}(\gamma^*E \to S^1)$ such that $(Ds)(\gamma) = D_\gamma s(\gamma)$ for $s \in \Gamma( \mathcal{E} \to LM)$.

\begin{proposition}\label{PointwiseCriterion}
An operator $D$ on $\Gamma(\mathcal{E} \to LM)$ is a pointwise endomorphism if and only if its associated operator $\tilde{D}$ is linear over $C^\infty(LM \times S^1)$.
\end{proposition}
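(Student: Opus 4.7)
The strategy is to use the isomorphism $\psi$ to translate the statement into a purely algebraic question about multiplication of sections, then exploit a standard tensoriality argument for the nontrivial direction. The key elementary observation is that $\psi^{-1}$ intertwines multiplication by $f \in C^\infty(LM \times S^1)$ with a fiberwise operation on the $LM$-side: for fixed $\gamma$, $\psi^{-1}(f\tilde{s})(\gamma) = f(\gamma, \cdot) \cdot s(\gamma) \in \Gamma(\gamma^*E \to S^1)$, where $f(\gamma,\cdot) \in C^\infty(S^1)$ and $s = \psi^{-1}\tilde{s}$.

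For the $(\Leftarrow)$ direction, I would assume $D$ is a pointwise endomorphism with family $\{D_\gamma\}_{\gamma \in LM}$. By definition each $D_\gamma \in \mathrm{End}(\gamma^*E \to S^1)$ is a bundle endomorphism, hence $C^\infty(S^1)$-linear on $\Gamma(\gamma^*E \to S^1)$. Then I would compute
\begin{align*}
\tilde{D}(f\tilde{s})(\gamma, \theta) = \bigl(D\psi^{-1}(f\tilde{s})\bigr)(\gamma)(\theta) = D_\gamma\bigl(f(\gamma,\cdot)\,s(\gamma)\bigr)(\theta) = f(\gamma,\theta)\,D_\gamma(s(\gamma))(\theta) = f(\gamma,\theta)\,\tilde{D}\tilde{s}(\gamma,\theta),
\end{align*}
where the penultimate equality uses the $C^\infty(S^1)$-linearity of $D_\gamma$. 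This is the desired $C^\infty(LM \times S^1)$-linearity of $\tilde{D}$.

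For the $(\Rightarrow)$ direction, I would invoke the standard tensoriality principle: a $C^\infty(B)$-linear endomorphism of $\Gamma(F \to B)$ for a vector bundle $F \to B$ is induced pointwise by a bundle endomorphism of $F$. The usual proof uses smooth bump functions and a local frame to show that the operator's value at a point $b$ depends only on the section's value at $b$, and here $B = LM \times S^1$ is a Fr\'echet manifold modeled on $C^\infty(S^1, \mathbb{R}^{\dim M}) \times \mathbb{R}$, which admits smooth bump functions, so the argument goes through. Applied to $\tilde{D}$, this yields a family of linear maps $\tilde{D}_{(\gamma,\theta)} : E_{\gamma(\theta)} \to E_{\gamma(\theta)}$. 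Setting $(D_\gamma \sigma)(\theta) \overset{\rm def}{=} \tilde{D}_{(\gamma,\theta)}\sigma(\theta)$ gives a bundle endomorphism $D_\gamma \in \mathrm{End}(\gamma^*E \to S^1)$, and translating back via $\psi$ one verifies $(Ds)(\gamma) = D_\gamma s(\gamma)$, so $D$ is pointwise.

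The main obstacle is justifying the tensoriality principle in the infinite-dimensional setting: the classical proof uses smooth partitions of unity and local frames, and one must check that these tools are available on $LM \times S^1$. Once this is granted, both directions reduce to direct manipulations with the isomorphism $\psi$ and the naturality of multiplication by functions.
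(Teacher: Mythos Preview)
Your proposal is correct and follows essentially the same approach as the paper: the direction ``pointwise $\Rightarrow$ $C^\infty(LM\times S^1)$-linear'' is a direct computation in both, and for the converse both you and the paper invoke the tensoriality principle via bump functions on $LM\times S^1$. The paper is in fact less careful than you are---it simply asserts the existence of a sequence of smooth bump functions with support shrinking to a point without discussing the Fr\'echet-manifold subtleties you flag.
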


\begin{proof}
First suppose $\tilde{D}$ is linear over $C^\infty(LM \times S^1)$.  Let $f \in C^\infty(LM \times S^1)$ and $s \in \Gamma(\mathcal{E} \to LM)$.  Then
\begin{align*}
D(fs)(\gamma)(\theta) &= \tilde{D}(f \tilde{s})(\gamma, \theta) = f \tilde{D} \tilde{s}(\gamma, \theta) = f D s (\gamma)(\theta).
\end{align*}
Let $(\gamma_0, \theta_0) \in LM \times S^1$ be fixed.  We may take a sequence $f_n$ of smooth bump functions whose support shrinks to $(\gamma_0, \theta_0)$ to conclude that $D s (\gamma)(\theta) = D_\gamma(\theta)s(\gamma)(\theta)$, for some $D_\gamma(\theta) \in {\rm End} \, E_{\gamma(\theta)}$, showing that $D_\gamma \in {\rm End}(\gamma^*E \to S^1)$.

Conversely, suppose that $D$ is a pointwise endomorphism.  Then $D_\gamma \in {\rm End}(\gamma^*E \to S^1)$ for all $\gamma \in LM$, and if $f \in C^\infty(LM \times S^1)$ and $s \in \Gamma(\mathcal{E} \to LM)$,
\begin{align*}
D (fs)(\gamma)(\theta) &= D_\gamma(\theta) f(\gamma, \theta) s(\gamma)(\theta) = f(\gamma, \theta) D_\gamma(\theta) s(\gamma)(\theta)\\
&= f(\gamma, \theta) D s(\gamma, \theta) = f Ds (\gamma)(\theta).
\end{align*}
In particular, our calculation shows that $\tilde{D}( f\tilde{s}) = f \tilde{D} \tilde{s}$, proving our claim.
\end{proof}

If $D$ is a pointwise endomorphism, we may take its \emph{leading order trace}
\begin{align*}
{\rm Tr} \, D (\gamma) \overset{\rm def}{=} \int_{S^1} {\rm tr}\,  D_\gamma(\theta) d\theta.
\end{align*}
It is straightforward to check that ${\rm Tr}$ defines a trace on the collection of pointwise endomorphisms.

A connection $\nabla$ on $E$ induces a connection $\nabla^{\mathcal{E}}$ on $\mathcal{E} \to LM$, defined as follows.  Suppose $s \in \Gamma(\mathcal{E})$, $\gamma \in LM$, $\theta \in S^1$, and $Y \in T_{\gamma}LM$.  Then we define $\nabla^{\mathcal{E}}$ by the equation
\begin{align*}
(\nabla_Y^{\mathcal{E}} s)_\gamma(\theta) = [ (ev^*\nabla^E)_{(Y, 0)}\tilde{s}]_{(\gamma, \theta)}.
\end{align*}
Thus $\nabla^{\mathcal{E}}$ is the operator on $\Gamma(\mathcal{E} \to LM)$ associated to $ev^*\nabla$ on $\Gamma( ev^*E \to LM \times S^1)$.

$S^1$ acts on $LM$ by rotation of loops, $(k_{\theta_0} \gamma)(\theta) = \gamma(\theta + \theta_0)$, and it acts on $\mathcal{E}$ by rotation of sections, $(\hat{k}_{\theta_0}s)(\theta) = s(\theta + \theta_0) \in \mathcal{E}|_{k_{\theta_0}\gamma}$.  These actions are compatible,
\begin{align*}
\begin{CD}
\mathcal{E} @> \hat{k}_\theta >> \mathcal{E}   \\
@VV  V @VV V  \\
LM @> k_\theta >> LM
\end{CD}
\end{align*}
which means $\mathcal{E} \to LM$ is an $S^1$-equivariant bundle.  Thus we have an $S^1$-action on $\Gamma(\mathcal{E} \to LM)$,
\begin{align*}
(k_{\theta_0}^\Gamma s)_\gamma(\theta) &\overset{\rm def}{=} (\hat{k}_{\theta_0} s)(k_{-\theta_0}\gamma)(\theta) = s({k_{-\theta_0} \gamma})(\theta + \theta_0).
\end{align*}
We average $\nabla^{\mathcal{E}}$ by the $S^1$-action to produce an $S^1$-invariant connection,
\begin{align*}
\nabla^{ave} = \frac{1}{2\pi}\int_{S^1} k_{-\theta}^{\Gamma \otimes T^*LM} \,  \nabla^{\mathcal{E}} \, k^\Gamma_\theta \, d\theta,
\end{align*}
as in Section \ref{GVectorBundle}.  Let $X$ be the vector field on $LM$ induced by the $S^1$-action.  We define the $S^1$-equivariant connection $\nabla^{S^1} = \nabla^{ave} - ui_X$ on $\mathcal{E} \to LM$, and the $S^1$-equivariant curvature $(\nabla^{S^1})^2 + u\mathcal{L}_X = \Omega^{S^1} \in \Lambda_{S^1}^*(LM, {\rm End}(\mathcal{E}))[u]$.  If $\Omega^{S^1}$ takes values in pointwise endomorphisms we may follow the $S^1$-equivariant Chern-Weil techniques of Section \ref{GVectorBundle} and define the $S^1$-equivariant characteristic forms
\begin{align*}
{\rm Tr} (\Omega^{S^1})^k &= \frac{1}{2\pi} \int_{S^1} {\rm tr} (\Omega^{S^1}_{\gamma(\theta)})^k \, d\theta,
\end{align*}
and an $S^1$-equivariant Chern character ${\rm Tr}\exp \Omega^{S^1}$.  For this reason, we must determine whether $\Omega^{S^1}$ takes values in pointwise endomorphisms of $\Gamma(\mathcal{E} \to LM)$.  We may write $\Omega^{S^1} = \Omega^{S^1}_{[2]} + u \Omega_{[0]}^{S^1}$, where $\Omega^{S^1}_{[k]} \in \Lambda^k_{S^1}(LM, {\rm End} \, \mathcal{E})$ and $\Omega^{S^1}_{[0]} = - [i_{X}, \nabla] + \mathcal{L}_{X}$.  Here $[ \cdot, \cdot ]$ denotes the superbracket as in \cite{BGV}.  If $\alpha \in \Lambda^*(LM)$ and $s \in \Gamma( \mathcal{E} \to LM)$, 
\begin{align*}
(-[i_{X}, \nabla^{ave}] + \mathcal{L}_{X})(\alpha \otimes s) &= -i_{X} \nabla^{ave} (\alpha \otimes s) - \nabla^{ave} i_{X} (\alpha \otimes s) + \mathcal{L}_{X} (\alpha \otimes s)\\
&= (-i_{X}d\alpha) \otimes s - (-1)^{|\alpha|} (i_{X} \alpha) \wedge \nabla^{ave} s - \alpha \otimes \nabla^{ave}_{X} s\\
&\hphantom{mbv}-(di_{X} \alpha) \otimes s - (-1)^{|\alpha| - 1} (i_{X} \alpha) \wedge \nabla^{ave} s\\
&\hphantom{mbv}+ (\mathcal{L}_{X} \alpha) \otimes s + \alpha \otimes (\mathcal{L}_{X} s)\\
&= \alpha \otimes (\mathcal{L}_{X} - \nabla^{ave}_{X})s.
\end{align*}

Let $D = \mathcal{L}_{X} - \nabla^{ave}_{X}$.  By Proposition \ref{PointwiseCriterion}, $\Omega_{[0]}^{S^1}$ takes values in pointwise endomorphisms if and only if the associated operator $\tilde{D}$ is linear over $C^{\infty}(LM \times S^1)$.  Suppose $s \in \Gamma(\mathcal{E} \to LM)$ and $f \in C^\infty(LM \times S^1)$.  By definition,
\begin{align*}
\mathcal{L}_{X} (fs) &=  \frac{d}{d\theta}\bigg|_{ \theta = 0} k_{-\theta}^\Gamma(fs).
\end{align*}
The $S^1$-action on $\Gamma(\mathcal{E} \to LM)$ induces an $S^1$-action on $\Gamma(ev^*E \to LM \times S^1)$ and an $S^1$-action on $C^\infty(LM \times S^1)$ by
\begin{align*}
\tilde{k}_{\theta_0}^\Gamma \tilde{s}(\gamma, \theta) = \tilde{s}(k_{-\theta_0}\gamma, \theta + \theta_0), \hphantom{bn} k_{\theta_0} f (\gamma, \theta) = f(k_{-\theta_0}\gamma, \theta + \theta_0).
\end{align*}
These $S^1$-actions are determined by the requirement that $k_{\theta_0}^\Gamma (fs) = (k_{\theta_0} f)(\tilde{k}_{\theta_0}^\Gamma \tilde{s})$, as
\begin{align*}
k_{\theta_0}^\Gamma(fs)(\gamma)(\theta) &= f(k_{-\theta_0}\gamma, \theta + \theta_0)s(k_{-\theta_0}\gamma)(\theta + \theta_0)\\
&= f(k_{-\theta_0}\gamma, \theta + t)\tilde{s}(k_{-\theta_0}\gamma, \theta + \theta_0) = (k_{\theta_0}f)(\tilde{k}^\Gamma_{\theta_0} \tilde{s})(\gamma, \theta).
\end{align*}
With these $S^1$-actions we have
\begin{align*}
\mathcal{L}_{X} (fs) &= \frac{d}{d\theta}\bigg|_{\theta = 0} k_{-\theta}^{\Gamma}(fs) = \frac{d}{d\theta}\bigg|_{\theta = 0} (k_{-\theta} f)(\tilde{k}_{-\theta}^\Gamma \tilde{s})\\
&= \frac{d}{d\theta}\bigg|_{\theta = 0} f(k_{\theta}\gamma, \theta_0 - \theta)\tilde{s}(k_{t}\gamma, \theta_0 - \theta) = \left[ \left(X - \frac{\partial}{\partial \theta}\right) f \right]s + f \mathcal{L}_{X}s.
\end{align*}
Thus we have shown that the associated operator $\tilde{\mathcal{L}}_{X}$ satisfies
\begin{align*}
\tilde{\mathcal{L}}_{X}(f \tilde{s}) = \left[ \left( X - \frac{\partial}{\partial \theta} \right) f \right]\tilde{s} + f \tilde{\mathcal{L}}_{X}\tilde{s}.
\end{align*}
On the other hand, we now consider the operator associated to $\nabla^{ave}_{Y}$.

\begin{align}\label{AssociatedAverage}
\tilde{\nabla}^{ave}_Y (fs) &= \int_{S^1} k_{-\theta}^{\Gamma} \tilde{\nabla}^{\mathcal{E}}_{k_\theta Y} k_\theta^{\Gamma} (fs)\, d\theta \nonumber \\
&=  \int_{S^1} \tilde{k}_{-\theta}^{\Gamma} ev^*\nabla^E_{(k_\theta Y, 0)} (k_\theta f) (\tilde{k}_\theta^{\Gamma} \tilde{s})\, d\theta \nonumber \\
&= \int_{S^1} \tilde{k}_{-\theta}^{\Gamma} \bigg( \big[(k_\theta Y)(k_\theta f)\big] (\tilde{k}_\theta^\Gamma \tilde{s}) + (k_\theta f) ev^*\nabla^E_{(k_\theta Y, 0)} (\tilde{k}_\theta^\Gamma \tilde{s}) \bigg) \, d\theta\\
&= \int_{S^1} \tilde{k}_{-\theta}\big[ (k_\theta Y)(k_\theta f)\big] \tilde{k}_{-\theta}^\Gamma \tilde{k}_\theta^\Gamma \tilde{s} + (k_{-\theta} k_\theta f) \tilde{k}_{-\theta}^\Gamma ev^* \nabla^E_{(k_\theta Y, 0)} (\tilde{k}_\theta^\Gamma s) \, d\theta \nonumber \\
&= \left( \int_{S^1} k_{-\theta}\big[ (k_\theta Y)(k_\theta f)\big] \, d\theta \right) s + f \tilde{\nabla}^{ave}_Y s. \nonumber
\end{align}

If $Y \in T_\gamma LM$, $(k_\theta Y, 0) \in T_{k_\theta \gamma}LM \oplus T_{\theta_0} S^1 = T_{(k_\theta \gamma, \theta_0) } LM \times S^1$.  Suppose $X = \frac{d}{d\epsilon} |_{\epsilon = 0} \gamma(\epsilon)$, for some curve $\gamma(\epsilon)$ with $\gamma(0)  = \gamma$.  Writing $(k_\theta Y, 0)$ as $k_\theta Y$, we have
\begin{align*}
(k_\theta Y)\bigg|_{(k_\theta \gamma, \theta_0)}(k_\theta f) &= \frac{d}{d\epsilon} \bigg|_{\epsilon = 0} (k_ \theta f)(k_\theta \gamma(\epsilon), \theta_0) = \frac{d}{d\epsilon}\bigg|_{\epsilon = 0} f(k_{-\theta}k_\theta \gamma(\epsilon), \theta_0 + \theta)\\
&= \frac{d}{d\epsilon}\bigg|_{\epsilon = 0} f(\gamma(\epsilon), \theta_0 + \theta) = Y\bigg|_{(\gamma, \theta_0 + \theta)}f.
\end{align*}

Therefore
\begin{align*}
k_{-\theta}[ (k_\theta Y) (k_\theta f)] \bigg|_{(\gamma, \theta_0)} &= (k_\theta Y)\bigg|_{(k_\theta \gamma, \theta_0 - \theta)} (k_\theta f) = Y\bigg|_{(\gamma, \theta_0)} f,
\end{align*}
and we have
\begin{align}\label{AverageSimplifies}
\int_{S^1} k_{-\theta}[ (k_\theta Y)(k_\theta f)]\, d\theta &= \int_{S^1} Yf  \, d\theta = Yf.
\end{align}
Combining equations (\ref{AssociatedAverage}) and (\ref{AverageSimplifies}), we have shown that
\begin{align*}
\tilde{\nabla}^{ave}_Y(f \tilde{s}) &= (Yf) \tilde{s} + f \tilde{\nabla}_Y^{ave} \tilde{s}.
\end{align*}
In particular, we have
\begin{align*}
\tilde{D} (f\tilde{s}) &= \tilde{\mathcal{L}}_{X}(f\tilde{s}) - \tilde{\nabla}^{ave}_{X}(f \tilde{s}) = \left[ \left( X - \frac{\partial}{\partial \theta} \right)f  \right]\tilde{s} + f \tilde{\mathcal{L}}_{X} \tilde{s} - (X f) \tilde{s} - f \tilde{\nabla}_{X}^{ave}\tilde{s}\\
&= - \left(\frac{\partial}{\partial \theta} f \right)\tilde{s} + f (\tilde{\mathcal{L}}_{X} - \tilde{\nabla}^{ave}_{X} )\tilde{s} = - \left( \frac{\partial}{\partial \theta} f \right) \tilde{s} + f \tilde{D} \tilde{s}.
\end{align*}
This computation shows that $\tilde{D}$ is not linear over $C^\infty(LM \times S^1)$, proving that $D$ is not valued in pointwise endomorphisms.  Thus we cannot take the leading order trace of $(\Omega^{S^1})^k$.  This approach was done incorrectly in \cite[\textsection 3]{LMRT}.

$S^1$-equivariant Chern-Weil theory requires a trace functional on $(\Omega^{S^1})^k$, for all $k \geq 0$.  Our calculations demonstrate that $\Omega^{S^1}$ is valued in first order operators, implying in this setting Chern-Weil techniques require a trace defined on some algebra of operators that includes differential operators of arbitrary order.  Our leading order trace is not defined on this algebra and so it does not define characteristic forms.  An alternative approach may be through the Wodzicki residue, ${\rm res}_W$, which is essentially the only trace on the full algebra of classical pseudodifferential operators.  However, ${\rm res}_W$ defines characteristic forms that do not extend the ordinary characteristic forms under the inclusion $M \hookrightarrow LM$.  (See \cite{PR} and \cite{MRTII} for a discussion of ${\rm res}_W$ and Wodzicki-Chern classes.)  Such $S^1$-equivariant Chern-Weil constructions using ${\rm res}_W$ may prove an interesting direction for future work.  

In summary, $S^1$-equivariant Chern-Weil techniques do not construct $S^1$-equivariant characteristic classes on $LM$ because $(\Omega^{S^1})^k$ takes values in an algebra of operators that does not admit a suitable trace, an analytic obstacle unique to infinite dimensional operators.

\subsection{Principal $LU(n)$-bundle approach}\label{PrincipalBundleApproach}

Let $\pi : FrE \to M$ be the unitary frame bundle of $E$.  It is a principal $U(n)$-bundle over $M$.  Moreover, it is shown in \cite[Theorem 4.6]{Stacey} that $\tilde{\pi} : LFrE \to LM$ admits the structure of a principal $LU(n)$-bundle, where $\tilde{\pi}(\gamma) = \pi \circ \gamma$.  Because $LU(n)$ is the structure group of $\mathcal{E}$, we may consider $LFrE$ to be the frame bundle of $\mathcal{E}$.  This idea is made precise in \cite[\textsection 4.3]{Stacey} and \cite[\textsection 2]{McLaughlin}.

The $S^1$-action on $LFrE$ covers the $S^1$-action on $LM$.
\begin{align*}
\begin{CD}
LFrE @> k_\theta >> LFrE\\
@VV V @VV V\\
LM @> k_\theta>> LM
\end{CD}
\end{align*}

The $S^1$-action on $LFrE$ induces a vector field on $LFrE$, which we also write $X$.  A connection 1-form $\omega \in \Lambda^1(FrE, \mathfrak{u}(n))$ on $FrE$ induces a connection 1-form $\tilde{\omega} \in \Lambda^1(LFrE, L\mathfrak{u}(n))$ on $LFrE$ given by
\begin{align*}
\tilde{\omega}(Y(\theta))\big|_{\theta = \theta_0} &= \omega(Y(\theta_0)),
\end{align*}
for $Y(\theta) \in T_\gamma LFrE$.  Notice that if $\theta_0 \in S^1$, $a \in LU(n)$, and $\gamma \in LFrE$, we have
\begin{align*}
[ R_a (k_{\theta_0}\gamma)](\theta) &= \gamma(\theta + \theta_0)  a(\theta),\\
[ k_{\theta_0}(R_a \gamma)](\theta) &= \gamma(\theta + \theta_0) a(\theta + \theta_0),
\end{align*}
proving $R_a \circ  k_{\theta_0} \neq k_{\theta_0} R_a$.

Thus the $S^1$-action and the $LU(n)$-action do not commute.  This is a significant departure from the finite dimensional case and we now explore its consequences.

\begin{lemma}\label{LoopFundamentalVF}
Let $A(\theta) \in L\mathfrak{u}(n)$ and let $A(\theta)^*$ be its fundamental vector field on $LFrE$.  For $\theta_0 \in S^1$, $k_{\theta_0 *} (A(\theta)^*|_{\gamma}) = A(\theta + \theta_0)|_{k_{\theta_0} \gamma}$.
\end{lemma}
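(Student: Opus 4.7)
The plan is to unravel the definition of the fundamental vector field and carefully track how the non-commuting $S^1$- and $LU(n)$-actions interact. By definition, the fundamental vector field of $A(\theta) \in L\mathfrak{u}(n)$ at $\gamma \in LFrE$ is
\begin{align*}
A(\theta)^*\big|_\gamma = \frac{d}{dt}\bigg|_{t=0} \gamma \cdot \exp(tA(\theta)),
\end{align*}
where $\exp : L\mathfrak{u}(n) \to LU(n)$ is the (pointwise) exponential map of the loop group. Since $k_{\theta_0 *}$ commutes with $\frac{d}{dt}|_{t=0}$, I compute the pushforward as
\begin{align*}
k_{\theta_0 *}\bigl(A(\theta)^*\big|_\gamma\bigr) = \frac{d}{dt}\bigg|_{t=0} k_{\theta_0}\bigl( \gamma \cdot \exp(tA(\theta)) \bigr).
\end{align*}

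Next, I evaluate this curve pointwise using exactly the calculation displayed immediately above the lemma: for any $\theta' \in S^1$,
\begin{align*}
\bigl[ k_{\theta_0}\bigl( \gamma \cdot \exp(tA(\theta)) \bigr)\bigr](\theta')
= \gamma(\theta' + \theta_0)\,\exp\bigl(tA(\theta' + \theta_0)\bigr)
= (k_{\theta_0}\gamma)(\theta')\,\exp\bigl(tA(\theta' + \theta_0)\bigr).
\end{align*}
Because exponentiation in $LU(n)$ is performed pointwise, the right-hand factor is precisely $\exp\bigl(tA(\cdot + \theta_0)\bigr)(\theta')$, where $A(\cdot + \theta_0) \in L\mathfrak{u}(n)$ denotes the shifted loop. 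Thus
\begin{align*}
k_{\theta_0}\bigl( \gamma \cdot \exp(tA(\theta)) \bigr) = (k_{\theta_0}\gamma) \cdot \exp\bigl(tA(\cdot + \theta_0)\bigr).
\end{align*}

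Taking $\frac{d}{dt}|_{t=0}$ of both sides and recognizing the right-hand side as the fundamental vector field of the shifted Lie algebra element at the point $k_{\theta_0}\gamma$ yields
\begin{align*}
k_{\theta_0 *}\bigl(A(\theta)^*\big|_\gamma\bigr) = A(\theta + \theta_0)^*\big|_{k_{\theta_0}\gamma},
\end{align*}
as claimed. The only subtle point is the appearance of the shifted loop $A(\cdot + \theta_0)$ rather than $A(\cdot)$ itself; this is precisely the manifestation of the non-commutativity $R_a \circ k_{\theta_0} \neq k_{\theta_0} \circ R_a$ noted just before the lemma, and it is what will later force the failure of the standard averaging construction. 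No substantial obstacle is expected in this proof itself; the argument is essentially a direct unwinding of definitions combined with the pointwise nature of the loop group exponential.
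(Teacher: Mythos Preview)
Your proof is correct and follows exactly the same approach as the paper's: unfold the definition of the fundamental vector field, push the $S^1$-rotation inside the derivative, and use the pointwise formula $k_{\theta_0}(\gamma \cdot \exp(tA(\theta))) = (k_{\theta_0}\gamma)\cdot \exp(tA(\theta+\theta_0))$. The paper compresses this into a single displayed chain of equalities, while you spell out the intermediate pointwise evaluation; the content is identical.
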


\begin{proof}
\begin{align*}
k_{\theta_0 *} (A(\theta)^*|_{\gamma}) &= \frac{d}{dt} \bigg|_{t = 0} k_{\theta_0}( \gamma(\theta) \cdot \exp(t A(\theta)) ) = \frac{d}{dt}\bigg|_{t = 0} \gamma(\theta + \theta_0) \cdot \exp(t A(\theta + \theta_0) ) = A(\theta + \theta_0)^*|_{k_{\theta_0} \gamma}
\end{align*}
\end{proof}

One should compare this lemma to Lemma \ref{FundamentalVF}.  The difference arises because $S^1$ acts on $LU(n)$, a subtlety absent in the finite dimensional case.  A consequence of this difference is

\begin{proposition}
Let $\mu$ be a connection 1-form on $LFrE$.  Whenever $\theta_0 \in S^1$ is not the identity element, $k_{\theta_0}^* \mu$ is not a connection 1-form.
\end{proposition}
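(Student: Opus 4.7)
The plan is to show that $k_{\theta_0}^*\mu$ fails the first axiom of a connection 1-form, namely the identity $\nu(A^*) = A$ on fundamental vector fields for every $A \in L\mathfrak{u}(n)$. The failure will follow directly from Lemma \ref{LoopFundamentalVF}, which packages the essential non-commutativity between the $S^1$-action and the $LU(n)$-action into the behavior of fundamental vector fields.

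First, I would fix $A(\theta) \in L\mathfrak{u}(n)$ and $\gamma \in LFrE$ and compute
\begin{align*}
(k_{\theta_0}^*\mu)_\gamma(A(\theta)^*|_\gamma)
&= \mu_{k_{\theta_0}\gamma}\bigl(k_{\theta_0 *}(A(\theta)^*|_\gamma)\bigr)\\
&= \mu_{k_{\theta_0}\gamma}\bigl(A(\theta+\theta_0)^*|_{k_{\theta_0}\gamma}\bigr)\\
&= A(\theta + \theta_0),
\end{align*}
using Lemma \ref{LoopFundamentalVF} in the second equality and the defining property of $\mu$ as a connection 1-form in the third. If $k_{\theta_0}^*\mu$ were a connection 1-form, this would have to equal $A(\theta)$ for every $A \in L\mathfrak{u}(n)$.

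The next step is to exhibit a specific $A \in L\mathfrak{u}(n)$ for which $A(\cdot + \theta_0) \not\equiv A(\cdot)$. Since $\theta_0$ is not the identity of $S^1$, any nonzero loop in $\mathfrak{u}(n)$ whose period does not divide $\theta_0$ will do; a concrete choice is $A(\theta) = e^{i\theta}B$ for some nonzero $B \in \mathfrak{u}(n)$ (or, more conservatively, any element of $L\mathfrak{u}(n)$ with a single nontrivial Fourier mode). For such $A$ one has $A(\theta+\theta_0) \neq A(\theta)$ as elements of $L\mathfrak{u}(n)$, so $(k_{\theta_0}^*\mu)(A^*) \neq A$ and $k_{\theta_0}^*\mu$ cannot be a connection 1-form.

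There is no real obstacle here; the content of the proposition is entirely absorbed into Lemma \ref{LoopFundamentalVF} and the fact that $S^1$ acts non-trivially on $L\mathfrak{u}(n)$ by rotation. The only point one should be careful about is to compare this with Proposition \ref{AverageConnection}: the finite-dimensional averaging argument of Section \ref{GPrincipalBundle} used Lemma \ref{FundamentalVF}, in which $\tilde{k}_{\theta *}(A^*) = A^*$ at the shifted basepoint with the \emph{same} Lie algebra element $A$, whereas here the Lie algebra element gets shifted to $A(\cdot + \theta_0)$. It is worth making this contrast explicit in the write-up, since it is precisely the mechanism that prevents both $k_{\theta_0}^*\mu$ from being a connection and the averaging procedure of Section \ref{GPrincipalBundle} from producing an $S^1$-invariant connection on $LFrE$.
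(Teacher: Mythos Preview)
Your argument is essentially identical to the paper's: both compute $(k_{\theta_0}^*\mu)(A(\theta)^*)=A(\theta+\theta_0)$ via Lemma \ref{LoopFundamentalVF} and observe this contradicts the connection axiom. If anything, you are slightly more careful than the paper in explicitly exhibiting a witness $A$ with $A(\cdot+\theta_0)\neq A(\cdot)$.

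One small technical slip: your concrete choice $A(\theta)=e^{i\theta}B$ with $B\in\mathfrak{u}(n)$ does not lie in $L\mathfrak{u}(n)$, since $(e^{i\theta}B)^*=-e^{-i\theta}B\neq -e^{i\theta}B$ for generic $\theta$. Replace it by, say, $A(\theta)=\cos(\theta)\,B$ (or any real nonconstant periodic scalar times $B$), and the argument goes through unchanged. Your parenthetical hedge about ``a single nontrivial Fourier mode'' already points in the right direction.
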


\begin{proof}
Suppose, to the contrary, that $k_{\theta_0}^*\mu \in \Lambda^1(LFrE, L\mathfrak{u}(n))$ is a connection 1-form.  Let $A(\theta) \in L\mathfrak{u}(n)$ and let $A(\theta)^*$ be its fundamental vector field on $LFrE$.  Because $k_{\theta_0}^*\mu$ is a connection 1-form,
\begin{align}\label{FirstEqn}
(k_{\theta_0}^*\mu)(A(\theta)^*) = A(\theta).
\end{align}
On the other hand, by Lemma \ref{LoopFundamentalVF},
\begin{align*}
(k_{\theta_0}^*\mu)(A(\theta)^*) = \mu(k_{\theta_0 *}A(\theta)^*) = \mu(A(\theta + \theta_0)^*) = A(\theta + \theta_0).
\end{align*}
contradicting equation (\ref{FirstEqn}).  Therefore $k_{\theta_0}^*\mu$ is not a connection 1-form.
\end{proof}

This proposition implies that the $S^1$-action on $LFrE$ does not induce an $S^1$-action on the space of connection 1-forms on $LFrE$, and so we cannot ask for an $S^1$-invariant connection 1-form.  Nevertheless, in analogy with Section \ref{GPrincipalBundle}, we consider $\tilde{\Omega} - u\tilde{\omega}(X)$, an equivariant 2-form valued in $L\mathfrak{u}(n)$.  Theorem 3.5 of \cite{Bis} states
\begin{align*}
D\tilde{\omega}(X) + i_{X}\tilde{\Omega} = 0,
\end{align*}
from which we see that
\begin{align*}
(D - ui_{X})(\tilde{\Omega} - u\tilde{\omega}(X)) &= D\tilde{\Omega} - uD\tilde{\omega}(X) -ui_{X}\tilde{\Omega} + u^2 i_{X}\tilde{\omega}(X) = 0,
\end{align*}
proving that the equivariant Bianchi identity holds.

\begin{theorem} \label{IndependentOfConnection}
${\rm Tr}(\tilde{\Omega} - u\tilde{\omega}(X))^k$ is equivariantly closed for all $k \geq 0$.  Moreover, the $S^1$-equivariant cohomology class $[{\rm Tr}(\tilde{\Omega} - u\tilde{\omega}(X))^k ] \in H_{S^1}^{2k}(LFrE)$ is independent of connection on $FrE$.
\end{theorem}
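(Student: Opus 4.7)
The plan is to imitate the classical Chern--Weil argument on $LFrE$, using the equivariant Bianchi identity $(D - u i_X)(\tilde\Omega - u\tilde\omega(X)) = 0$ that has just been verified as the only nontrivial input. Write $\alpha = \tilde\Omega - u\tilde\omega(X)$, an $L\mathfrak{u}(n)$-valued equivariant form of total degree $2$ on $LFrE$, and let ${\rm Tr}$ denote the natural trace $\int_{S^1}{\rm tr}$ on $L\mathfrak{u}(n)$.

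For equivariant closedness, first observe that ${\rm Tr}$ vanishes on pointwise commutators (since ${\rm tr}$ does), so for any $L\mathfrak{u}(n)$-valued form $\beta$ on $LFrE$, ${\rm Tr}(d\beta) = {\rm Tr}(D\beta)$. Since $\alpha$ has even total degree, the graded Leibniz rule yields $D(\alpha^k) = k\,(D\alpha)\alpha^{k-1}$ and $i_X(\alpha^k) = k\,(i_X\alpha)\alpha^{k-1}$. Combining,
\begin{align*}
(d - u i_X){\rm Tr}(\alpha^k) = {\rm Tr}\bigl((d - u i_X)\alpha^k\bigr) = k\,{\rm Tr}\bigl((D - u i_X)\alpha \cdot \alpha^{k-1}\bigr) = 0
\end{align*}
by the equivariant Bianchi identity.

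For independence of connection, I would run a standard transgression argument. Given two connections $\omega_0, \omega_1$ on $FrE$, the affine path $\omega_t = (1-t)\omega_0 + t\omega_1$ is again a connection on $FrE$, and its induced connection on $LFrE$ is $\tilde\omega_t = (1-t)\tilde\omega_0 + t\tilde\omega_1$, with $\dot{\tilde\omega}_t = \tilde\omega_1 - \tilde\omega_0 =: \tilde\eta$ horizontal and $L\mathfrak{u}(n)$-valued. Writing $\alpha_t = \tilde\Omega_t - u\tilde\omega_t(X)$ and $D_t$ for the associated covariant derivative, a short computation gives $\tfrac{d}{dt}\alpha_t = (D_t - u i_X)\tilde\eta$. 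Using the Leibniz rule and the equivariant Bianchi identity for $\alpha_t$ exactly as above,
\begin{align*}
\tfrac{d}{dt}{\rm Tr}(\alpha_t^k) = k\,{\rm Tr}\bigl((D_t - u i_X)\tilde\eta \cdot \alpha_t^{k-1}\bigr) = k\,(d - u i_X){\rm Tr}\bigl(\tilde\eta\, \alpha_t^{k-1}\bigr).
\end{align*}
Integrating from $0$ to $1$ gives ${\rm Tr}(\alpha_1^k) - {\rm Tr}(\alpha_0^k) = (d - u i_X)\bigl(k \int_0^1 {\rm Tr}(\tilde\eta\, \alpha_t^{k-1})\, dt\bigr)$, so the two classes coincide in $H^{2k}_{S^1}(LFrE)$.

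The principal point to check carefully is that the equivariant Bianchi identity recalled just before the theorem holds along the entire path $\tilde\omega_t$ rather than merely at a single connection. This reduces to the observation that convex combinations of pull-up connections from $FrE$ are themselves pull-ups of the corresponding convex combination (immediate from the fiberwise definition $\tilde\omega(Y)|_{\theta_0} = \omega(Y(\theta_0))$), so Bismut's Theorem 3.5 applies uniformly in $t$. The only meaningful difference from the finite-dimensional picture of Proposition \ref{StandardBasic} is that the conclusion is stated on $LFrE$ rather than on $LM$, reflecting the fact that the failure of $R_a$ and $k_\theta$ to commute breaks the transformation law (\ref{EquivariantTransformationLaw}) and prevents ${\rm Tr}(\alpha^k)$ from being basic; this is precisely why the theorem makes no descent claim.
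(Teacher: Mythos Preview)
Your proposal is correct and follows essentially the same Chern--Weil transgression argument as the paper: both use the equivariant Bianchi identity to get closedness, then run the affine homotopy $\tilde\omega_t$ and show $\tfrac{d}{dt}{\rm Tr}(\alpha_t^k)$ is equivariantly exact. Your explicit remark that the path $\tilde\omega_t$ is itself induced from a connection on $FrE$ (so Bismut's Theorem~3.5 applies for every $t$) is a point the paper leaves implicit. One small quibble: the identities $D(\alpha^k)=k(D\alpha)\alpha^{k-1}$ and $i_X(\alpha^k)=k(i_X\alpha)\alpha^{k-1}$ are only literally true after applying ${\rm Tr}$ and using its cyclicity, since $L\mathfrak{u}(n)$-valued forms need not commute; the paper commits the same elision, and the argument is unaffected.
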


\begin{proof}
To prove the first claim, we note that
\begin{align*}
(d - ui_{X}){\rm Tr}(\tilde{\Omega} - u\tilde{\omega}(X) )^k &= {\rm Tr}(D - ui_{X})(\tilde{\Omega} - u\tilde{\omega}(X))^k = 0.
\end{align*}
To prove the second claim, suppose $\omega_0$ and $\omega_1$ are two connections on $FrE$ with induced connections $\tilde{\omega}_0$ and $\tilde{\omega}_1$.  Let $\alpha = \tilde{\omega}_1 - \tilde{\omega}_0$, $\tilde{\omega}_t = \tilde{\omega}_0 + t\tilde{\alpha}$, and let $\tilde{\Omega}_t$ be the curvature of $\tilde{\omega}_t$.  It is standard that $D_t\alpha = \frac{d}{dt} \tilde{\Omega}_t$ \cite[Ch. XII, Lemma 4]{KN}.  It follows that
\begin{align*}
(D_t - ui_{X})\alpha = \frac{d}{dt}(\tilde{\Omega}_t - u\tilde{\omega}_t(X)).
\end{align*}
Therefore
\begin{align*}
(d -ui_{X})k{\rm Tr}\big( \alpha \wedge (\tilde{\Omega}_t - u\tilde{\omega}_t(X))^{k - 1}\big) &= k {\rm Tr} (D_t - ui_{X})\big(\alpha \wedge (\tilde{\Omega}_t - u\tilde{\omega}_t(X))^{k - 1} \big) \\
&= k {\rm Tr} \big( ((D_t - ui_{X})\alpha) \wedge (\tilde{\Omega}_t - u\tilde{\omega}_t(X))^{k - 1} \big) \\
&= k {\rm Tr}\left(  \frac{d}{dt} (\tilde{\Omega}_t - u\tilde{\omega}_t(X)) \wedge (\tilde{\Omega}_t - u\tilde{\omega}_t(X))^{k - 1} \right) \\
&= \frac{d}{dt} {\rm Tr}(\tilde{\Omega}_t  - u\tilde{\omega}_t(X) )^k,
\end{align*}
and so
\begin{align*}
(d - ui_{X})& \int_0^1 k {\rm Tr}(\alpha \wedge (\tilde{\Omega}_t - u\tilde{\omega}_t(X))^{k - 1} dt = \int_0^1 \frac{d}{dt} {\rm Tr}(\tilde{\Omega}_t - u\tilde{\omega}_t(X) )^k dt\\
&= {\rm Tr}(\tilde{\Omega}_1 - u \tilde{\omega}_1(X) )^k  - {\rm Tr} (\tilde{\Omega}_0 - u \tilde{\omega}_0(X) )^k.
\end{align*}
\end{proof}

Thus we have a sequence of $S^1$-equivariant cohomology classes $[{\rm Tr}(\tilde{\Omega} - u\tilde{\omega}(X))^k] \in H_{S^1}^{2k}(LFrE)$.  We find, however, that these $S^1$-equivariant cohomology classes do not descend to $LM$, in contrast to the finite dimensional case presented in Section \ref{GPrincipalBundle}.  This follows from Theorem 3.5 of \cite{Bis}, which states that for $a \in LU(n)$,
\begin{align}\label{BismutsEqn3.6}
(R_a^* \omega(X))(\theta) = {\rm Ad}_{a(\theta)^{-1}}\omega(X) + a(\theta)^{-1}\dot{a}(\theta).
\end{align}
This formula differs from equation (\ref{EquivariantTransformationLaw}), which is the key property used to prove Proposition \ref{StandardBasic}.  This is an important difference between the finite dimensional and infinite dimensional cases.  As a result, we see that
\begin{align}\label{LUnTransgression}
R_a^* {\rm Tr }(\tilde{\Omega} - u\tilde{\omega}(X) ) &= {\rm Tr} \, (R_a^*\tilde{\Omega} - uR_a^*\tilde{\omega}(X)) \nonumber\\
&= {\rm Tr}( {\rm Ad}_{a^{-1}} \tilde{\Omega} - u {\rm Ad}_{a^{-1}}\tilde{\omega}(X) -u a^{-1}\dot{a}) \nonumber \\
&= {\rm Tr}(\tilde{\Omega} - u  \tilde{\omega}(X)) -u {\rm Tr}\, a^{-1}\dot{a} \nonumber\\
&= {\rm Tr}(\tilde{\Omega} - u \tilde{\omega}(X)) - u \frac{1}{2\pi} \int_{S^1} {\rm tr}\, a^{-1}\dot{a} \, d\theta\\
&= {\rm Tr}(\tilde{\Omega} - u \tilde{\omega}(X)) - u i W(\det a) \nonumber \\
&\neq {\rm Tr}\, (\tilde{\Omega} - u\tilde{\omega}(X)), \nonumber
\end{align}
where $W(\det a)$ is the winding number of $\det a : S^1 \to S^1$. The last equality follows because
\begin{align*}
 \int_{S^1} {\rm tr}\, a^{-1} \dot{a} \, d\theta &= \int_{S^1} {\rm tr}\, \frac{d}{dt} \log a \, d\theta = \int_{S^1} \frac{d}{dt} \log \det a \, d\theta = 2\pi i W(\det a).
\end{align*}

This calculation proves that ${\rm Tr}(\tilde{\Omega} -u\tilde{\omega}(X))$ is not $LU(n)$-invariant.  Therefore ${\rm Tr}(\tilde{\Omega} - u\tilde{\omega}(X))$ is not basic, i.e. ${\rm Tr}(\tilde{\Omega} - u\tilde{\omega}(X)) \neq \pi^* \beta$ for all $\beta \in \Lambda_{S^1}^*(LM)[u]$, and the equivariant differential forms ${\rm Tr}(\tilde{\Omega} - u\tilde{\omega}(X))^k$ do not define characteristic classes on $LM$.  Instead, we have an $S^1$-equivariant Chern character on $LFrE$, ${\rm ch}^{S^1}(\mathcal{E}) = {\rm Tr} \exp(\tilde{\Omega} - u \tilde{\omega}(X) )^k$, defining an $S^1$-equivariant characteristic class $[{\rm ch}^{S^1}(\mathcal{E})] \in H_{S^1}^*(LFrE)$.

The author is not aware of examples in which $[{\rm Tr}( \tilde{\Omega} - u\tilde{\omega}(X) )^k]$ is non-trivial.  Nevertheless, the following criterion may detect when these classes do not vanish.

\begin{proposition}
$[{\rm Tr}(\tilde{\Omega} - u \tilde{\omega}(X) ) ] \neq 0$ if $[{\rm tr}\, \Omega ] \neq 0$ in $H^2(FrE)$.
\end{proposition}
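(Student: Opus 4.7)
The plan is to pull back along the inclusion of constant loops $i: FrE \hookrightarrow LFrE$, $p \mapsto \gamma_p$ where $\gamma_p(\theta) \equiv p$. Since rotation of loops fixes every constant loop, $i$ is $S^1$-equivariant when $FrE$ carries the trivial $S^1$-action, so it induces a pullback $i^* : H_{S^1}^*(LFrE) \to H_{S^1}^*(FrE)$. With the trivial action the Cartan differential $d - ui_X$ reduces to $d$, so $H_{S^1}^*(FrE) \cong H^*(FrE) \otimes \mathbb{R}[u]$; in particular $H_{S^1}^2(FrE) \cong H^2(FrE) \oplus u\, H^0(FrE)$.

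Next I would compute $i^*\bigl({\rm Tr}(\tilde\Omega - u\tilde\omega(X))\bigr)$. The fundamental vector field $X$ of the rotation action vanishes at every constant loop (since constant loops are fixed points of the $S^1$-action), so $i^*\tilde\omega(X) = 0$ and the $u$-term drops out entirely. For the curvature term, the defining property $\tilde\omega(Y(\theta))|_{\theta_0} = \omega(Y(\theta_0))$ of Stacey's induced connection shows that at a constant loop $\gamma_p$ the pullback $i^*\tilde\omega$ is the constant loop in $\mathfrak{u}(n)$ with value $\omega$, and consequently $i^*\tilde\Omega$ is the constant loop in $\mathfrak{u}(n)$ with value $\Omega$. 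Since ${\rm Tr}\, A = \frac{1}{2\pi}\int_{S^1} {\rm tr}\, A(\theta)\, d\theta$, applying ${\rm Tr}$ to a constant loop reproduces the underlying finite-dimensional trace, yielding $i^*{\rm Tr}\bigl(\tilde\Omega - u\tilde\omega(X)\bigr) = {\rm tr}\,\Omega$ as a 2-form on $FrE$.

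Finally, under the decomposition $H_{S^1}^2(FrE) \cong H^2(FrE) \oplus u\, H^0(FrE)$, the class $[{\rm tr}\,\Omega]$ sits in the $H^2(FrE)$ summand. If $[{\rm tr}\,\Omega] \neq 0$ in $H^2(FrE)$, then its image in $H_{S^1}^2(FrE)$ is nonzero, and by functoriality of $i^*$ the class $[{\rm Tr}(\tilde\Omega - u\tilde\omega(X))]$ cannot vanish in $H_{S^1}^2(LFrE)$.

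The main point requiring careful verification is the identification $i^*\tilde\Omega = $ (constant loop in $\Omega$). This should follow routinely from the fact that $\tilde\omega$ is defined pointwise in $\theta$ via $\omega$, together with the observation that exterior differentiation on $LFrE$ and wedge products commute with evaluation of loops at a fixed $\theta \in S^1$; no new ideas are needed. Equivariance of $i$ (with trivial action on $FrE$) and the $\mathbb{R}[u]$-module structure of $H_{S^1}^*$ for a trivial $S^1$-action are standard, so the only real content is the pointwise compatibility of $\tilde\Omega$ with $\Omega$ on constant loops.
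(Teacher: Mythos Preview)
Your proposal is correct and follows essentially the same approach as the paper: both pull back along the inclusion of constant loops $FrE \hookrightarrow LFrE$ and verify that $i^*{\rm Tr}\,\tilde\Omega = {\rm tr}\,\Omega$. The only cosmetic difference is in the concluding step: you invoke the equivariant pullback $i^*: H_{S^1}^*(LFrE) \to H_{S^1}^*(FrE) \cong H^*(FrE)[u]$ and the splitting of $H_{S^1}^2(FrE)$, whereas the paper argues directly that if ${\rm Tr}(\tilde\Omega - u\tilde\omega(X)) = (d - ui_X)\alpha$ for some $\alpha \in \Lambda^1_{S^1}(LFrE)$, then matching degree-$2$ components forces ${\rm Tr}\,\tilde\Omega = d\alpha$, contradicting $[{\rm Tr}\,\tilde\Omega] \neq 0$ in ordinary $H^2(LFrE)$.
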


\begin{proof}
Let $i : M \to LM$ be the embedding of $M$ as the subspace of constant loops.  If $Y, Z \in T_pM$,
\begin{align*}
(i^*{\rm Tr}\, \tilde{\Omega})_p(Y, Z) &= {\rm Tr}\, \tilde{\Omega}_{i(p)}(i_*Y, i_*Z) = \frac{1}{2\pi} \int_{S^1} {\rm tr}\, \Omega_p(Y, Z) d\theta = {\rm tr}\, \Omega_p(Y, Z),
\end{align*}
proving $i^*{\rm Tr}\, \tilde{\Omega} = {\rm tr}\, \Omega$.  

It follows that if $[ {\rm tr} \, \Omega ] \neq 0$, then $[ {\rm Tr} \, \tilde{\Omega} ] \neq 0$ as well.  Therefore ${\rm Tr}(\tilde{\Omega} - u\tilde{\omega}(X) ) \neq (d - ui_X)\alpha$ for all $\alpha \in \Lambda_{S^1}^1(LFrE)$, as that would imply ${\rm Tr}\, \tilde{\Omega} = d\alpha$.
\end{proof}

In summary, this alternative construction via a principal $LU(n)$-bundle fails because the $S^1$-action does not commute with the $LU(n)$-action.  In contrast to the construction via a covariant derivative, this obstacle is not unique to infinite dimensions, since commutativity of the group actions is required for equivariant Chern-Weil techniques on finite dimensional manifolds, outlined in Section \ref{EquivariantCohomology}.

It is instructive to compare this construction with \cite[Remark 2]{Bis}, where Bismut reformulates this construction on a principal $LU(n) \rtimes S^1$-bundle over $LM$ for which the $S^1$-action commutes with the structure group action.  In this set-up, we must find a trace on the Lie algebra of $LU(n) \rtimes S^1$ to define characteristic forms.  The Lie algebra of $LU(n) \rtimes S^1$ is isomorphic to the collection of differential operators given by $b \frac{d}{d\theta} + A(\theta)$ acting on $C^\infty(S^1, \mathbb{C}^n)$, where $b \in \mathbb{R}$ and $A(\theta) \in \mathfrak{u}(n)$, so we find the same obstacle of requiring a suitable trace on differential operators acting on this infinite dimensional space.

\subsection{Bismut's construction}\label{BismutChernCharacter}

We end this section by presenting Bismut's construction of ${\rm BCh}$ \cite{Bis} to provide an example of how $S^1$-equivariant Chern-Weil techniques have been modified for the loop space setting.  Given $\gamma \in LFrE$, let $H(t)$ be the solution to the integral equation
\begin{align}\label{IntegralEqn}
H(t) = Id + \int_0^t H(v)(\Omega_{\gamma(v)}  + \omega(X) ) dv.
\end{align}
$H(t)$ is a power series of even-degree $S^1$-invariant differential forms valued in $L\mathfrak{u}(n)$.  In \cite[Theorem 3.7]{Bis} Bismut proves
\begin{align}\label{BismutsEqns}
(D + i_{X})H &= 0, \hphantom{bn} (R_a^* H)(t) = a(0)^{-1}H(t)a(t),
\end{align}
for $a \in LU(n)$.  The second equation in (\ref{BismutsEqns}) is the key to showing that ${\rm BCh}$ defines a class on $LM$, in contrast to the equivariant differential forms in Section \ref{PrincipalBundleApproach} defined by $S^1$-equivariant Chern-Weil techniques, so we prove it now.

Equation (\ref{BismutsEqn3.6}) says
\begin{align*}
R_a^*(\omega(X) + \Omega) = {\rm Ad}_{a^{-1}}(\omega(X) + \Omega) + a^{-1}\dot{a}.
\end{align*}
Combining this equation with (\ref{IntegralEqn}), we see that $R_a^*H$ satisfies the integral equation
\begin{align}\label{IntegralEquation}
(R_aH)(s) = Id + \int_0^s (R_aH)(v) {\rm Ad}_{a^{-1}} (\omega(X) + \Omega) + (R_aH)(v) a^{-1}(v) \dot{a}(v) dv.
\end{align}
Thus to prove the second equation in (\ref{BismutsEqns}), we need to check that $a^{-1}(0)H(s)a(s)$ satisfies equation (\ref{IntegralEquation}).  Using  (\ref{IntegralEqn}),
\begin{align*}
\frac{d}{ds} &\big[a^{-1}(0)H(s) a(s) \big]\\
&= a^{-1}(0) \left( \frac{d}{ds}H(s) \right)a(s) + a^{-1}(0)H(s) \dot{a}(s)\\
&= a^{-1}(0)H(s)(\omega(X) + \Omega) a(s) + a^{-1}(0)H(s)a(s)a^{-1}(s)\dot{a}(s)\\
&= \big[a^{-1}(0)H(s) a(s) \big]a^{-1}(s) (\omega(X) + \Omega) a(s) + \big[a^{-1}(0) H(s) a(s) \big] a^{-1}(s)\dot{a}(s).
\end{align*}
Moreover, 
\begin{align*}
a^{-1}(0)H(0)a(0) &= a^{-1}(0)( Id) a(0) = Id,
\end{align*}
proving that $a^{-1}(0)H(s)a(s)$ satisfies (\ref{IntegralEquation}). Therefore $(R_a^*H)(s) = a^{-1}(0)H(s)a(s)$.

In particular, because $a(0) = a(1)$ for $a \in LU(n)$, we see that
\begin{align*}
R_a^*\, {\rm tr}\, H(1) &= {\rm tr}\, R_a^*H(1) = {\rm tr} [a(0)^{-1}H(1)a(1)] = {\rm tr}\, H(1),
\end{align*}
proving ${\rm Tr}\, H(1)$ is $LU(n)$-invariant.  Moreover it is clear that ${\rm tr}\, H(1)$ is a horizontal form, because $\Omega$ and $\omega(X)$ are horizontal.  Therefore ${\rm Tr}\, H(1) = \tilde{\pi}^*\beta$ for some $\beta \in \Lambda^*_{S^1}(LM)$ and we define $\beta$ to be ${\rm BCh}$.  It follows from the first equation in (\ref{BismutsEqns}) that ${\rm BCh}$ is closed with respect to the differential $d + i_X$, so ${\rm BCh}$ defines a cohomology class in $\bar{h}^*_{S^1}(LM)$.

In this way Bismut modifies the construction of the $S^1$-equivariant Chern character to produce an $LU(n)$-invariant differential form, salvaging the techniques of $S^1$-equivariant Chern-Weil theory that fail because the $S^1$ and $LU(n)$ actions on $LFrE$ do not commute.  On the other hand, Bismut's construction does not define an element of the Cartan model and so it does not define a class in $H_{S^1}^*(LM)$.  For if we introduce $u$ by the integral equation
\begin{align*}
\tilde{H}(t) &= Id + \int_0^t \tilde{H}(v)(\Omega_{\gamma(v)} - u \omega(X) )dv,
\end{align*}
whose solution $\tilde{H}(t)$ is a power series of even-degree $S^1$-equivariant differential forms valued in $L\mathfrak{u}(n)$,  we find that ${\rm tr}\, \tilde{H}(1)$ defines an equivariantly closed differential form on $LFrE$ that is not basic because

\begin{proposition}
${\rm tr}\, \tilde{H}(1)$ is not $LU(n)$-invariant.
\end{proposition}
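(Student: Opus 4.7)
The plan is to exploit the same transformation law that prevented $\mathrm{Tr}(\tilde\Omega - u\tilde\omega(X))^k$ from being basic, and show that it obstructs invariance even after Bismut's path-ordered exponential trick once the parameter $u$ is inserted. First I would expand $\tilde H(1)$ as a Picard iteration of the integral equation, obtaining
\begin{equation*}
\tilde H(1) \;=\; \sum_{k\geq 0} \int_{\Delta_k} \alpha(v_1)\alpha(v_2)\cdots \alpha(v_k)\, dv_1\cdots dv_k,
\end{equation*}
where $\alpha(v) = \Omega_{\gamma(v)} - u\,\omega(X)$ and $\Delta_k$ is the $k$-simplex $\{0\leq v_1\leq\cdots\leq v_k\leq 1\}$. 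Since each factor $\alpha$ has equivariant degree $2$, the term indexed by $k$ contributes in total equivariant degree $2k$, and in particular the entire contribution to the equivariant-degree-$2$ component of $\mathrm{tr}\,\tilde H(1)$ comes from $k=1$, namely $\int_0^1 \mathrm{tr}\,\alpha(v)\, dv$.

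Next I would compute the transformation of this component under $R_a^*$. Combining the fact that $R_a^*\Omega = \mathrm{Ad}_{a^{-1}}\Omega$ (since $\Omega$ is horizontal and transforms by the adjoint) with Bismut's equation (\ref{BismutsEqn3.6}), we get $R_a^*\alpha = \mathrm{Ad}_{a(v)^{-1}}\alpha - u\,a(v)^{-1}\dot a(v)$. Taking traces kills the adjoint, and the equivariant-degree-$2$ part becomes
\begin{equation*}
R_a^* \!\int_0^1 \mathrm{tr}\,\alpha(v)\, dv
\;=\; \int_0^1 \mathrm{tr}\,\alpha(v)\, dv \;-\; u\!\int_{S^1} \mathrm{tr}\bigl(a^{-1}\dot a\bigr)\, d\theta
\;=\; \int_0^1 \mathrm{tr}\,\alpha(v)\, dv \;-\; 2\pi i\, u\, W(\det a),
\end{equation*}
using exactly the winding-number identity already invoked in (\ref{LUnTransgression}). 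Choosing $a(\theta) = \mathrm{diag}(e^{i\theta},1,\ldots,1)\in LU(n)$ gives $W(\det a)=1$, so the degree-$2$ components of $\mathrm{tr}\,\tilde H(1)$ and $R_a^*\mathrm{tr}\,\tilde H(1)$ differ by the nonzero equivariant $2$-form $-2\pi i\,u$, proving non-invariance.

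A secondary, more conceptual route—worth mentioning as sanity check—is to mimic Bismut's argument for $H$ and observe exactly why it breaks: setting $K(s) := a(0)^{-1}\tilde H(s)a(s)$ and differentiating as in the proof of (\ref{BismutsEqns}) produces the integral equation
\begin{equation*}
K(s) \;=\; Id + \int_0^s K(v)\bigl[\mathrm{Ad}_{a(v)^{-1}}(\Omega - u\omega(X)) + a(v)^{-1}\dot a(v)\bigr] dv,
\end{equation*}
whereas $R_a^*\tilde H$ satisfies the same equation with $+a^{-1}\dot a$ replaced by $-u\,a^{-1}\dot a$. The mismatch in the coefficient of $a^{-1}\dot a$ (namely $1$ versus $-u$) vanishes precisely at $u=-1$, recovering Bismut's invariance for $\mathrm{BCh}$, but fails in the Cartan model. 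The main obstacle is ensuring that the non-invariance in the degree-$2$ component cannot be cancelled by contributions from the higher-$k$ terms under $R_a^*$; this is automatic because $R_a^*$ preserves the equivariant $\mathbb Z$-grading, so equivariant-degree-$2$ terms can only transform among themselves, and no such cancellation is possible.
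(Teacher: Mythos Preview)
Your proposal is correct and follows essentially the same route as the paper: expand $\tilde H(1)$ as an iterated integral, isolate the equivariant-degree-$2$ component $\int_0^1 \mathrm{tr}(\Omega_{\gamma(v)} - u\,\omega(X))\,dv = \mathrm{Tr}(\tilde\Omega - u\tilde\omega(X))$, and then invoke equation~(\ref{LUnTransgression}) to see this component is not $LU(n)$-invariant. Your explicit choice of $a$ with $W(\det a)=1$, the grading remark ensuring higher-$k$ terms cannot cancel, and the secondary comparison with Bismut's argument are all sound additions not spelled out in the paper.
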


\begin{proof}
Let $A(t) = \Omega_{\gamma(t)} - u \omega(X)$.  Then $\tilde{H}(t)$ can be written
\begin{align*}
\tilde{H}(t) &= Id + \sum_{m \geq 1} \int_0^t \int_0^{t_m} \cdots \int_0^{t_2} A(t_m) \cdots A(t_1) dt_1 \ldots dt_m.
\end{align*}
Therefore the degree 2 component of ${\rm tr}\, \tilde{H}(1)$ is given by
\begin{align*}
{\rm tr}\, \tilde{H}(1)_{[2]} &= \int_0^1 {\rm tr} (\Omega_{\gamma(t)} - u \omega(X))dt = {\rm Tr}(\tilde{\Omega} - u\tilde{\omega}(X) ).
\end{align*}
Equation (\ref{LUnTransgression}) then shows that ${\rm tr}\, \tilde{H}(1)_{[2]}$ is not $LU(n)$-invariant.
\end{proof}

\begin{remark}
In \cite{GJP}, another version of the Bismut-Chern character is constructed by the integral equation
\begin{align*}
H(t) &= Id + \int_0^t H(v)(-u^{-1}\Omega_{\gamma(v)}  + \omega(X) ) dv,
\end{align*}
producing a class in $h_{S^1}^*(LM)$.  This version of the Bismut-Chern character is the image of ${\rm BCh}$ under $q_0^{-1} : \bar{h}_{S^1}^{ev}(LM) \to h_{S^1}^0(LM)$.  In \cite{TWZ} this Bismut-Chern character is used to define a refinement of differential K-theory.
\end{remark}

\section{$S^1$-equivariant first Chern class}\label{EquFCC}

Equation (\ref{LUnTransgression}) suggests ${\rm Tr}(\widetilde{\Omega} - u \tilde{\omega}(X))$ is invariant under those elements $a \in LU(n)$ satisfying $W(\det a) = 0$.  These elements make up $L^0U(n)$, the connected component of $LU(n)$ containing the identity, implying ${\rm Tr}(\widetilde{\Omega} - u \tilde{\omega}(X))$ descends to $LM$ whenever $LFrE$ admits a reduction of its structure group to this subgroup.  This section explores this special class of vector bundles and defines an $S^1$-equivariant first Chern class on these bundles with $S^1$-equivariant Chern-Weil techniques.

Section \ref{StructureGroupReduction} proves a criterion that determines when $LFrE$ admits such a reduction of its structure group and offers some examples.  Section \ref{ReducedCircleAction} then defines an $S^1$-action on the reduced bundle $L^0FrE$ so that the embedding $j : L^0FrE \to LFrE$ is $S^1$-equivariant.  Finally, Section \ref{EquFCCDefn} shows that $j^*{\rm Tr}(\widetilde{\Omega} - u\tilde{\omega}(X) )$ descends to an equivariant 2-form on $LM$ defining a class we call the $S^1$-equivariant first Chern class of $\mathcal{E}$, $c_1^{S^1}(\mathcal{E})$.  Section \ref{NontrivialExamples} concludes by providing examples of loop spaces that admit non-trivial $c_1^{S^1}(\mathcal{E})$.

\subsection{Reduction of structure group to $L^0U(n)$}\label{StructureGroupReduction}

Recall that for a Lie group $G$, the space $EG$ is defined up to homotopy equivalence by the requirements that $EG$ is contractible and $G$ acts freely on it.  We then define the classifying space of $G$ to be $BG = EG/G$.  See \cite[\textsection 1]{GS} for a proof that classifying spaces exist for compact $G$.  In particular, we may choose a realization of $EU(n)$, a contractible space on which $U(n)$ acts freely.

It follows that we may take $ELU(n) = LEU(n)$, since $LEU(n)$ is contractible and $LU(n)$ acts freely on it.  Then 
\begin{align*}
BLU(n) = LEU(n)/LU(n) \approx L(EU(n)/U(n)) = LBU(n).
\end{align*}
Let $L^0U(n)$ be the connected component of the identity in $LU(n)$.  $L^0U(n)$ acts freely on $LEU(n)$ as well, so we may take $BL^0U(n) = LEU(n)/L^0U(n)$.  Moreover, the inclusion $L^0U(n) \to LU(n)$ induces a map $p : BL^0U(n) \to BLU(n)$ given by $p[\gamma]_{L^0U(n)} = [\gamma]_{LU(n)}$.

Let $f : M \to BU(n)$ be a classifying map for the rank $n$ complex vector bundle $E \to M$.  Then $\tilde{f} : LM \to LBU(n) = BLU(n)$ is a classifying map for the pushdown bundle $\mathcal{E} \to LM$, where $\tilde{f}(\gamma) = f \circ \gamma$.  We wish to characterize when $LFrE$ admits a reduction of its structure group to $L^0U(n)$ and we begin with several lemmas.

\begin{lemma}\label{TorsionFree}
$H^1(LM ; \mathbb{Z})$ is torsion-free.
\end{lemma}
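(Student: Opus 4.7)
The plan is to apply the universal coefficient theorem to reduce torsion-freeness of $H^1$ to a purely formal statement. Specifically, the short exact sequence
\begin{equation*}
0 \to \mathrm{Ext}(H_0(LM;\mathbb{Z}),\mathbb{Z}) \to H^1(LM;\mathbb{Z}) \to \mathrm{Hom}(H_1(LM;\mathbb{Z}),\mathbb{Z}) \to 0
\end{equation*}
holds for any space. First I would observe that $H_0(LM;\mathbb{Z})$ is free abelian (it is $\mathbb{Z}^c$ where $c$ is the number of path components of $LM$), so the $\mathrm{Ext}$ term vanishes. This yields the identification $H^1(LM;\mathbb{Z}) \cong \mathrm{Hom}(H_1(LM;\mathbb{Z}),\mathbb{Z})$.

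The final step is then to note that $\mathrm{Hom}(A,\mathbb{Z})$ is torsion-free for every abelian group $A$: if $\varphi \in \mathrm{Hom}(A,\mathbb{Z})$ satisfies $n\varphi = 0$ for some positive integer $n$, then $n\varphi(a) = 0$ in $\mathbb{Z}$ for each $a \in A$, forcing $\varphi(a)=0$ since $\mathbb{Z}$ is torsion-free. Hence $H^1(LM;\mathbb{Z})$ is torsion-free.

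I do not anticipate a genuine obstacle here; the statement is a general fact about first integral cohomology and uses nothing specific about the loop space structure of $LM$. The only point worth mentioning is that the argument does not require $LM$ to have finitely generated homology, since the vanishing of $\mathrm{Ext}(H_0,\mathbb{Z})$ and the torsion-freeness of $\mathrm{Hom}(-,\mathbb{Z})$ are valid without finiteness hypotheses.
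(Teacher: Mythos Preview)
Your proposal is correct and follows essentially the same argument as the paper: apply the universal coefficient theorem, observe that $H_0(LM;\mathbb{Z})$ is free abelian so the $\mathrm{Ext}$ term vanishes, and conclude from the torsion-freeness of $\mathrm{Hom}(-,\mathbb{Z})$. Your version is slightly more detailed in justifying why $\mathrm{Hom}(A,\mathbb{Z})$ is torsion-free, but the route is the same.
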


\begin{proof}
By the universal coefficient theorem the following sequence is exact
\begin{align*}
0 \to {\rm Ext}(H_0(LM), \mathbb{Z}) \to H^1(LM ; \mathbb{Z}) \to {\rm Hom}(H_1(LM) ; \mathbb{Z}) \to 0.
\end{align*}
${\rm Ext}(H_0(LM), \mathbb{Z}) = 0$, since $H_0(LM) \cong \oplus_\alpha \mathbb{Z}$.  Moreover, ${\rm Hom}(H_1(LM); \mathbb{Z})$ is torsion-free, implying $H^1(LM ; \mathbb{Z})$ is torsion-free as well.
\end{proof}

\begin{lemma}\label{Fiber}
For any $x \in BLU(n)$, $p^{-1}(x)$ is diffeomorphic to $\mathbb{Z}$.  Moreover, $p : BL^0U(n) \to BLU(n)$ is a covering space in the sense of \cite{Hatcher}.
\end{lemma}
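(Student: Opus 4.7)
My plan is to identify the fiber explicitly and then promote fiber-wise information to local triviality using the principal bundle structure. First, since $L^0U(n)$ is the identity component of $LU(n)$, it is a normal subgroup, so $LU(n)/L^0U(n)$ is a group, and by definition this quotient equals $\pi_0(LU(n))$. For a connected Lie group $G$, evaluation at the basepoint fits into a fibration $\Omega G \to LG \to G$, so $\pi_0(LG) \cong \pi_1(G)$. Applied to $G = U(n)$, this gives $\pi_0(LU(n)) \cong \pi_1(U(n)) \cong \mathbb{Z}$, with an explicit isomorphism provided by the winding number $a \mapsto W(\det a)$ that already appears in Section \ref{PrincipalBundleApproach}.

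Next I would identify the fiber of $p$ at a point $x = [\gamma]_{LU(n)} \in BLU(n)$. Since $LU(n)$ acts freely on $LEU(n)$, two points $\gamma \cdot a$ and $\gamma \cdot b$ in the $LU(n)$-orbit of $\gamma$ represent the same class in $BL^0U(n)$ precisely when $a^{-1}b \in L^0U(n)$. Hence $p^{-1}(x) \cong LU(n)/L^0U(n) \cong \mathbb{Z}$, which is discrete. This establishes the first claim.

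For the covering space statement, I would argue via local triviality. $LEU(n) \to BLU(n)$ carries the structure of a principal $LU(n)$-bundle (it is the loop space functor applied to the universal $U(n)$-bundle; local trivializations $U \times U(n)$ on $EU(n) \to BU(n)$ loop to local trivializations $LU \times LU(n)$ on $LEU(n) \to BLU(n) = LBU(n)$, using $L(U \times U(n)) = LU \times LU(n)$). Quotienting such a local trivialization by the right action of $L^0U(n) \subset LU(n)$ gives a local trivialization of $p$ of the form $LU \times (LU(n)/L^0U(n)) \cong LU \times \mathbb{Z} \to LU$. Thus $p$ is a fiber bundle with discrete fiber $\mathbb{Z}$, which is exactly the standard definition of a covering space.

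The main obstacle is the usual infinite-dimensional subtlety: verifying that loop space constructions preserve local triviality and that $L^0U(n) \hookrightarrow LU(n)$ behaves as a closed Fr\'echet Lie subgroup with a well-defined quotient. I would handle this by pulling back explicit local trivializations of $EU(n) \to BU(n)$ (together with the coset decomposition of $LU(n)$ indexed by winding number) rather than invoking general submersion machinery. Once local triviality over a neighborhood $LU \subset LBU(n)$ is written down concretely, the covering space conclusion is immediate.
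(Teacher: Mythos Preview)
Your approach is essentially the same as the paper's: identify the fiber as $LU(n)/L^0U(n)\cong\mathbb{Z}$, then use local triviality of $LEU(n)\to BLU(n)$ and quotient by $L^0U(n)$ to obtain local trivializations $V\times\mathbb{Z}$ of $p$. The paper computes $\pi_0(LU(n))$ via the explicit coset decomposition $b=\bar b\,a^k$ and invokes the semidirect product $LU(n)\cong L^0U(n)\rtimes\mathbb{Z}$ from Pressley--Segal, whereas you use the fibration $\Omega G\to LG\to G$; these are equivalent.

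There is one genuine gap in your sketch of local triviality. You propose that a trivialization $EU(n)|_U\cong U\times U(n)$ ``loops to'' a trivialization $LEU(n)|_{LU}\cong LU\times LU(n)$. This is correct over $LU$, but the open sets $LU$ do \emph{not} cover $LBU(n)$: a loop $\gamma:S^1\to BU(n)$ typically has image not contained in any single trivializing $U$. So this does not yet establish that $LEU(n)\to BLU(n)$ is locally trivial everywhere. The paper sidesteps this by citing Stacey's result that looping a principal $G$-bundle yields a principal $LG$-bundle (with a remark that the argument extends to the Hilbert-manifold realizations of $EU(n)$ and $BU(n)$). To close the gap in your approach you would need, for a given loop $\gamma_0$, to build a trivialization over a genuine neighborhood of $\gamma_0$ in $LBU(n)$, e.g.\ by the parallel-transport/tubular-neighborhood construction used earlier in the paper for pushdown bundles, rather than by looping a single chart.
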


\begin{proof}
We first prove that $p^{-1}(x)$ is diffeomorphic to $\mathbb{Z}$.  Let $a \in LU(n)$ such that $[a]$ generates $\pi_1 U(n)$.  Then any $b \in LU(n)$ can be written $b = \bar{b} a^k$, for some $\bar{b} \in L^0U(n)$ and $k \in \mathbb{Z}$.  It follows that if $[\gamma_1]_{LU(n)} = [\gamma_2]_{LU(n)}$, then $[\gamma_1]_{L^0U(n)} = [\gamma_2]_{L^0U(n)} \cdot a^k$, proving that $p^{-1}(x) \cong \mathbb{Z}$ for any $x \in BLU(n)$.

To prove that $p : BL^0U(n) \to BLU(n)$ is a covering space, we first observe that $EU(n) \to BU(n)$ admits the structure of a principal $U(n)$-bundle \cite[\textsection 1.1]{GS}.  It follows that $ELU(n) \to BLU(n)$ admits the structure of a principal $LU(n)$-bundle \cite[\textsection 4.3]{Stacey}.  

We note that \cite[Theorem 4.6]{Stacey} does not, strictly speaking, apply in this case, since Stacey's results are stated for a finite dimensional principal $G$-bundle that is `looped' to yield a principal $LG$-bundle.  However, $EU(n)$ and $BU(n)$ admit realizations as infinite dimensional Hilbert manifolds \cite[\textsection 1.1]{GS}, and his arguments remain valid in this setting with only minor adjustments.

Let $V \subset BLU(n)$ be a neighborhood over which $ELU(n)$ is trivial, i.e. $ELU(n)|_V \approx LU(n) \times V$.  Using the isomorphism $LU(n) \cong L^0U(n) \rtimes \mathbb{Z}$ \cite[\textsection 4.7]{PS} we have $ELU(n)|_V \approx L^0U(n) \times \mathbb{Z} \times V$.  Therefore $BL^0U(n)|_V \approx \mathbb{Z} \times V$, and $p$ is given by the projection on the second factor $\mathbb{Z} \times V \to V$.  Therefore $p : BL^0U(n) \to BLU(n)$ is a covering space.
\end{proof}

\begin{lemma}\label{FundamentalGroups}
$\pi_1BLU(n) \cong \mathbb{Z}$ and $\pi_1BL^0U(n) \cong  1$.
\end{lemma}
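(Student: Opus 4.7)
The plan is to read off both fundamental groups from the long exact sequences in homotopy associated with the two principal bundles $LU(n) \to LEU(n) \to BLU(n)$ and $L^0U(n) \to LEU(n) \to BL^0U(n)$ introduced in the proof of Lemma~\ref{Fiber}. The foundational observation is that $LEU(n)$ is contractible: since $EU(n)$ is contractible, any null-homotopy $H : EU(n) \times I \to EU(n)$ induces one on $LEU(n)$ by post-composition with loops, so $\pi_k LEU(n) = 0$ for all $k \geq 0$.

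First I would apply the long exact sequence to $LU(n) \to LEU(n) \to BLU(n)$. Vanishing of $\pi_k LEU(n)$ collapses it to a group isomorphism $\pi_1 BLU(n) \cong \pi_0 LU(n)$. To evaluate the right-hand side, I would use that for a path-connected topological group $G$ the components of the free loop space are in bijection with conjugacy classes in $\pi_1 G$; since $\pi_1 U(n) \cong \mathbb{Z}$ is abelian, this bijection is just $\pi_0 LU(n) \cong \pi_1 U(n) = \mathbb{Z}$. Equivalently, one can argue via the homotopy equivalence $LU(n) \simeq U(n) \times \Omega U(n)$, from which $\pi_0 LU(n) \cong \pi_0 \Omega U(n) = \pi_1 U(n) = \mathbb{Z}$. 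Either version yields $\pi_1 BLU(n) \cong \mathbb{Z}$ as an isomorphism of groups (both sides inherit compatible group structures from the relevant topological groups).

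The second claim is then immediate: applying the same long exact sequence to $L^0U(n) \to LEU(n) \to BL^0U(n)$ gives $\pi_1 BL^0U(n) \cong \pi_0 L^0U(n)$, which is trivial by the definition of $L^0U(n)$ as the connected component of the identity in $LU(n)$. I do not expect a serious obstacle; the only delicate point is verifying that the fibration machinery is legitimate in the infinite-dimensional Hilbert manifold setting used for $EU(n)$ and $BU(n)$, but this was already addressed in the proof of Lemma~\ref{Fiber} via Stacey's results. As a sanity check, note that the conclusions are consistent with Lemma~\ref{Fiber}: $p : BL^0U(n) \to BLU(n)$ is then the universal cover of $BLU(n)$, and the deck transformation group $\mathbb{Z}$ agrees with the fiber computed there.
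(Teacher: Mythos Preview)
Your proposal is correct and follows essentially the same approach as the paper: both use the long exact sequence of homotopy groups for the fibrations $LU(n) \to ELU(n) \to BLU(n)$ and $L^0U(n) \to ELU(n) \to BL^0U(n)$, invoke contractibility of $ELU(n)$ to reduce $\pi_1$ of the base to $\pi_0$ of the fiber, and identify $\pi_0 LU(n) \cong \pi_1 U(n) \cong \mathbb{Z}$ via the splitting $LU(n) \simeq \Omega U(n) \times U(n)$. Your additional remarks on the group structure and the universal-cover sanity check are pleasant embellishments but not essential differences.
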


\begin{proof}
To prove the first claim, we consider the principal $LU(n)$-bundle $ELU(n) \to BLU(n)$.  We may apply the long exact sequence of homotopy groups to this fibration and we have
\begin{align*}
\ldots \to \pi_kELU(n) \to \pi_kBLU(n) \to \pi_{k - 1}LU(n) \to \pi_{k - 1}ELU(n) \to \ldots
\end{align*}
Since $ELU(n)$ is contractible, this sequence implies $\pi_k BLU(n) \cong \pi_{k - 1}LU(n)$.  In particular,
\begin{align*}
\pi_1 BLU(n) \cong \pi_0 LU(n) = \pi_0 [\Omega U(n) \times U(n)] = \pi_1U(n) \times \pi_0U(n)  \cong \mathbb{Z}.
\end{align*}
If we again apply the long exact sequence of homotopy groups to the principal $L^0U(n)$-bundle $EL^0U(n) \to BL^0U(n)$ we similarly see $\pi_k BL^0U(n) \cong \pi_{k - 1}L^0U(n)$.  Therefore $\pi_1BL^0U(n)  \cong \pi_0L^0U(n) \cong \{ 1 \}$.
\end{proof}

\begin{lemma}\label{InducedMaps}
Let $\tilde{f} : LM \to BLU(n)$.  Then $\tilde{f}_* : \pi_1 LM \to \pi_1 BLU(n)$ is trivial if and only if $\tilde{f}_* : H_1(LM) \to H_1(BLU(n))$ is the zero map.
\end{lemma}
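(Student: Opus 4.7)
The plan is to apply the naturality of the Hurewicz homomorphism $h$. By Lemma \ref{FundamentalGroups}, $\pi_1 BLU(n) \cong \mathbb{Z}$ is already abelian, so the Hurewicz map $h : \pi_1 BLU(n) \to H_1 BLU(n)$ is an isomorphism (abelianization of an abelian group).

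Naturality of $h$ with respect to $\tilde{f}$ gives the commutative diagram, at each choice of basepoint in $LM$,
\begin{CD}
\pi_1(LM) @>{\tilde{f}_*}>> \pi_1(BLU(n))\\
@VV{h}V @VV{h}V\\
H_1(LM) @>{\tilde{f}_*}>> H_1(BLU(n))
\end{CD}
in which the right vertical arrow is an isomorphism.

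The forward implication then follows because for each path component $C$ of $LM$ with basepoint $x_0$, the Hurewicz map $\pi_1(C, x_0) \to H_1(C)$ is surjective, and $H_1(LM) = \bigoplus_C H_1(C)$; triviality of $\tilde{f}_*$ on $\pi_1$ together with commutativity forces $\tilde{f}_*$ on $H_1$ to vanish component-by-component. For the reverse direction, given $[\gamma] \in \pi_1(LM, x_0)$ under the assumption $\tilde{f}_* = 0$ on $H_1$, commutativity yields $h(\tilde{f}_*[\gamma]) = \tilde{f}_*(h[\gamma]) = 0$, and injectivity of the right-hand $h$ forces $\tilde{f}_*[\gamma] = 0$.

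The underlying point is that any homomorphism into an abelian group factors through the abelianization of its source, so $\pi_1$-triviality and $H_1$-triviality coincide here. The only minor subtlety is that $LM$ has many path components; this is handled by working in one component at a time and using the direct sum decomposition of $H_1(LM)$, so I do not anticipate a serious obstacle.
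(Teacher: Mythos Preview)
Your proof is correct and follows essentially the same approach as the paper: both use naturality of the Hurewicz map together with the fact that $\pi_1 BLU(n)\cong\mathbb{Z}$ is abelian (so the right-hand Hurewicz map is an isomorphism) and surjectivity of the left-hand Hurewicz map. If anything, you are slightly more careful than the paper in explicitly handling the multiple path components of $LM$ via the decomposition $H_1(LM)=\bigoplus_C H_1(C)$, whereas the paper simply asserts that $h_1$ is surjective without comment on basepoints.
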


\begin{proof}

Let $\tilde{f}_*^{\pi_1}$ be the induced map on fundamental groups and let $\tilde{f}_*^{H_1}$ be the induced map on homology groups.  Also let $h_1 : \pi_1 LM \to H_1(LM)$ and $h_2 : \pi_1 BLU(n) \to H_1( BLU(n))$ be the homomorphisms obtained by regarding loops as singular 1-cycles \cite[\textsection 2.A]{Hatcher}.  We have the following diagram
\begin{align*}
\begin{CD}
\pi_1 LM@> \tilde{f}_*^{\pi_1} >> \pi_1 BLU(n)\\
@VV h_1 V @VV h_2 V\\
H_1(LM) @> \tilde{f}_*^{H_1} >> H_1(BLU(n))
\end{CD}
\end{align*}
Moreover, this diagram commutes because $h_2 \circ \tilde{f}_*^{\pi_1} [\gamma] = \tilde{f}_*^{H_1} \circ h_1 [\gamma] = [\tilde{f} \circ \gamma] \in H_1(BLU(n))$.  Note that $h_1$ and $h_2$ are surjective, $\ker h_1$ is the commutator subgroup of $\pi_1 LM$, and $\ker h_2$ is the commutator subgroup of $\pi_1 BLU(n)$.  It follows that $h_2$ is an isomorphism, since $\pi_1BLU(n)$ is abelian.

Suppose $\tilde{f}_*^{\pi_1} = 0$.  Then
\begin{align*}
0 &= h_2 \circ \tilde{f}_*^{\pi_1} = \tilde{f}_*^{H_1} \circ h_1.
\end{align*}
Because $h_1$ is surjective, this implies $\tilde{f}_*^{H_1} = 0$ as well.  On the other hand, suppose $\tilde{f}_*^{H_1} = 0$.  Then
\begin{align*}
0 &= \tilde{f}_*^{H_1} \circ h_1 = h_2 \circ \tilde{f}_*^{\pi_1}.
\end{align*}
Because $h_2$ is an isomorphism, it must be that $\tilde{f}_*^{\pi_1} = 0$.  We have shown $\tilde{f}_*^{\pi_1} = 0$ if and only if $\tilde{f}_*^{H_1} = 0$, proving our claim.
\end{proof}

The \emph{transgression map on cohomology} $\tau^* : H^k(M; \mathbb{Z}) \to H^{k - 1}(LM ; \mathbb{Z})$ is defined as the composition
\begin{align*}
H^k(M ; \mathbb{Z}) \overset{ev^*}{\to} H^k(LM \times S^1 ; \mathbb{Z}) \to H^{k - 1}(LM ; \mathbb{Z}),
\end{align*}
where the second arrow is given by the slant product with the generator of $H_1(S^1)$ \cite[\textsection 2]{KK}.  The transgression map is also defined for cohomology with $\mathbb{C}$-coefficients, where the second arrow is given by integration over $S^1$-fibers.  The transgression map plays an important role in the cohomology of loop spaces.  Recall that $H^*(BU(n); \mathbb{Z}) \cong \mathbb{Z}[c_1, \ldots, c_n]$, where $f^*c_j = c_j(E)$ whenever $f : M \to BU(n)$ is a classifying map for $E \to M$.  Proposition 3 of \cite{KK} states that $H^*(LBU(n); \mathbb{Z}) \cong \mathbb{Z}[c_1, \ldots, c_n] \otimes \Lambda (\tau^* c_1, \ldots, \tau^* c_n)$, where $\Lambda( \tau^* c_1, \ldots \tau^* c_n)$ is the exterior algebra generated by $\tau^* c_1, \ldots, \tau^* c_n$.  It follows that $H^1(BLU(n); \mathbb{Z})$ is generated by $\tau^* c_1$.

\begin{proposition}\label{ReductionCriterion}
$\mathcal{E}$ admits a reduction of its structure group to $L^0U(n)$ if and only if $\tau^* c_1(E) = 0$.
\end{proposition}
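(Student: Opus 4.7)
The plan is to convert the structure-group-reduction question into a lifting problem for the classifying map $\tilde{f}:LM\to BLU(n)$ and then use covering space theory together with the cohomological inputs prepared by Lemmas \ref{TorsionFree}--\ref{InducedMaps}.

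First I would recall that a principal $LU(n)$-bundle admits a reduction of its structure group to $L^{0}U(n)$ if and only if its classifying map lifts through $p:BL^{0}U(n)\to BLU(n)$. By Lemma \ref{Fiber}, $p$ is a covering space, so by the standard lifting criterion the lift exists if and only if $\tilde{f}_{*}(\pi_{1}LM)\subseteq p_{*}(\pi_{1}BL^{0}U(n))$. Lemma \ref{FundamentalGroups} gives $\pi_{1}BL^{0}U(n)=1$, so the condition collapses to $\tilde{f}_{*}=0$ on fundamental groups, and by Lemma \ref{InducedMaps} this is equivalent to $\tilde{f}_{*}=0$ on $H_{1}$.

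Next I would dualize to cohomology. By Hurewicz (since $\pi_{1}BLU(n)\cong\mathbb{Z}$ is abelian) together with Lemma \ref{FundamentalGroups}, $H_{1}(BLU(n);\mathbb{Z})\cong\mathbb{Z}$, so the universal coefficient theorem gives $H^{1}(BLU(n);\mathbb{Z})\cong\mathbb{Z}$, generated by $\tau^{*}c_{1}$ via the computation $H^{*}(LBU(n);\mathbb{Z})\cong\mathbb{Z}[c_{1},\dots,c_{n}]\otimes\Lambda(\tau^{*}c_{1},\dots,\tau^{*}c_{n})$ from \cite{KK}. Since $H^{1}(LM;\mathbb{Z})$ is torsion-free by Lemma \ref{TorsionFree}, universal coefficients also yields $H^{1}(LM;\mathbb{Z})\cong\mathrm{Hom}(H_{1}(LM),\mathbb{Z})$. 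Under these identifications $\tilde{f}^{*}$ becomes the algebraic dual of $\tilde{f}_{*}:H_{1}(LM)\to H_{1}(BLU(n))\cong\mathbb{Z}$, and pairing with the generator $\tau^{*}c_{1}$ shows that $\tilde{f}_{*}=0$ on $H_{1}$ if and only if $\tilde{f}^{*}\tau^{*}c_{1}=0$ in $H^{1}(LM;\mathbb{Z})$.

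Finally I would invoke naturality of the transgression to identify $\tilde{f}^{*}\tau^{*}c_{1}$ with $\tau^{*}c_{1}(E)$. Writing $\tau^{*}=\int_{S^{1}}\circ\, ev^{*}$ and using the commuting square
\begin{equation*}
ev\circ(\tilde{f}\times\mathrm{id})=f\circ ev:LM\times S^{1}\to BU(n),
\end{equation*}
together with the fact that fiber integration along $S^{1}$ commutes with any pullback that is the identity on the $S^{1}$-factor, one gets $\tilde{f}^{*}\circ\tau^{*}=\tau^{*}\circ f^{*}$, hence $\tilde{f}^{*}\tau^{*}c_{1}=\tau^{*}f^{*}c_{1}=\tau^{*}c_{1}(E)$. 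Chaining the equivalences completes the proof. The most delicate step is the cohomological dualization in the second paragraph, since it requires that $H_{1}(BLU(n))$ be detected by a \emph{single} class and that both universal coefficient sequences split without any $\mathrm{Ext}$-correction; the torsion-freeness of $H^{1}(LM;\mathbb{Z})$ established in Lemma \ref{TorsionFree} is precisely what makes this clean.
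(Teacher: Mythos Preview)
Your proof is correct and follows essentially the same route as the paper's: convert to a lifting problem for the classifying map, apply the covering-space criterion via Lemmas \ref{Fiber}--\ref{InducedMaps} to reduce to $\tilde f_*=0$ on $H_1$, then dualize using that $\tau^*c_1$ generates $H^1(BLU(n);\mathbb Z)$ and invoke naturality of the transgression to identify $\tilde f^*\tau^*c_1$ with $\tau^*c_1(E)$. One small quibble: the isomorphism $H^1(LM;\mathbb Z)\cong\mathrm{Hom}(H_1(LM),\mathbb Z)$ comes from the vanishing of $\mathrm{Ext}(H_0(LM),\mathbb Z)$ (since $H_0$ is free), not from the torsion-freeness of $H^1(LM;\mathbb Z)$ asserted in Lemma \ref{TorsionFree}; indeed the paper's proof of that lemma is exactly this UCT computation, so your invocation is harmless but slightly misattributed.
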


\begin{proof}

$\mathcal{E}$ admits a reduction of its structure group if and only if $\tilde{f} : LM \to BLU(n)$ admits a lift $\hat{f} : LM \to BL^0U(n)$.  
\begin{center}
\begin{tikzcd}
&BL^0U(n) \arrow{d}{p} \\
LM \arrow{r}[swap]{\tilde{f}}\arrow{ru}{\hat{f}}
&BLU(n)
\end{tikzcd}
\end{center}
Since $p : BL^0U(n) \to BLU(n)$ is a covering space in the sense of \cite{Hatcher}, this lift exists if and only if $\tilde{f}_*(\pi_1 LM) \subset p_* (\pi_1 BL^0U(n))$ \cite[Proposition 1.33]{Hatcher}.  By Lemma \ref{FundamentalGroups}, $\pi_1 BL^0U(n) = \{ 1 \}$.  Therefore $\tilde{f}$ admits a lift if and only if $\tilde{f}_* : \pi_1 LM  \to \pi_1 BLU(n)$ is trivial.  By Lemma \ref{InducedMaps}, this is true if and only if $\tilde{f}_* : H_1(LM) \to H_1(BL^0U(n))$ is the zero map.

We will show that $\tilde{f}_* = 0$ if and only if $\tau^* c_1(E) = 0$.  We first remark that $\tau^* c_1(E) = \tau^* f^* c_1 = \tilde{f}^* \tau^* c_1$ \cite[Prop. 2]{KK}.  Suppose $\tilde{f}_* = 0$ and let $\sigma \in H_1(LM)$.  Then
\begin{align*}
\langle \tilde{f}^* \tau^* c_1, \sigma \rangle = \langle \tau^* c_1, \tilde{f}_* \sigma \rangle = 0,
\end{align*}
since $\tilde{f}_* \sigma = 0$.  Because $H^1(LM; \mathbb{Z}) \cong {\rm Hom}_{\mathbb{Z}}(H_1(LM), \mathbb{Z})$, we have proven $\tilde{f}^*\tau^* c_1 = \tau^* c_1(E) = 0$.

On the other hand, suppose $\tilde{f}^* \tau^* c_1 = 0$ and let $\sigma \in H_1(LM)$.  Then
\begin{align}\label{PairingVanishes}
0 &= \langle \tilde{f}^* \tau^* c_1, \sigma \rangle = \langle \tau^* c_1, \tilde{f}_* \sigma \rangle.
\end{align}
Since $H_1(BLU(n)) \cong \mathbb{Z}$, it is generated by some element $\alpha$ and we may write $\tilde{f}_* \sigma = k \alpha$.  Since $\tau^* c_1$ generates $H^1(BLU(n); \mathbb{Z})$, we may pick $\alpha$ so that $\langle \tau^* c_1, \alpha \rangle = 1$.  Therefore
\begin{align*}
\langle \tau^* c_1, \tilde{f}_* \sigma \rangle =  \langle \tau^* c_1, k \alpha \rangle = k.
\end{align*}
Combining this with equation (\ref{PairingVanishes}), we see $k = 0$ and consequently $\tilde{f}_* = 0$.  Therefore $LFrE$ admits a reduction of its structure group to $L^0U(n)$ if and only if $\tau^* c_1(E) = 0$.
\end{proof}

Suppose $LFrE$ admits this reduction of structure group and suppose $\hat{\pi} : P \to LM$ is our reduced bundle.  Let $j : P \to LFrE$ be the map including $P$ as a sub-bundle of $LFrE$.

\begin{center}
\begin{tikzcd}
P \arrow[hook]{r}{j}\arrow{rd}[swap]{\hat{\pi}}
&LFrE \arrow{d}{\tilde{\pi}}\\
&LM
\end{tikzcd}
\end{center}

\begin{lemma}
Let $b \in L^0U(n)$ and $a \in LU(n)$ such that $[a]$ generates $\pi_1 U(n)$.  Then there is some $\tilde{b} \in L^0U(n)$ such that $ab = \tilde{b}a$.
\end{lemma}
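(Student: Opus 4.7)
The plan is quite short: I would simply define $\tilde b := a b a^{-1}$, so that $\tilde b a = ab$ by construction, and then verify that $\tilde b$ lies in $L^0U(n)$. The real content is that $L^0U(n)$, being the identity component of the topological group $LU(n)$, is a normal subgroup. Hence conjugation by any element of $LU(n)$, including the specific $a$ whose class generates $\pi_1 U(n)$, preserves $L^0U(n)$.

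To make this concrete, I would argue as follows. Since $b \in L^0U(n)$, there exists a continuous path $\beta : [0,1] \to L^0U(n) \subset LU(n)$ with $\beta(0) = e$ and $\beta(1) = b$. Consider the path $\gamma(t) = a\,\beta(t)\, a^{-1}$ in $LU(n)$. This is continuous because multiplication and inversion in $LU(n)$ are continuous in the Fr\'echet topology; moreover $\gamma(0) = a e a^{-1} = e$ and $\gamma(1) = a b a^{-1} = \tilde b$. Therefore $\tilde b$ is connected to the identity by a path in $LU(n)$, which means $\tilde b \in L^0U(n)$.

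I do not anticipate a genuine obstacle here: the statement is a straightforward instance of the fact that the identity component of a topological group is normal, applied to the Fr\'echet Lie group $LU(n)$ and its identity component $L^0U(n)$. The specific hypothesis that $[a]$ generates $\pi_1 U(n) \cong \pi_0 LU(n)$ is not actually used in the proof of this lemma; it is presumably there because the lemma is invoked elsewhere with this particular $a$ (as in the decomposition $LU(n) \cong L^0U(n) \rtimes \mathbb{Z}$ from \cite[\textsection 4.7]{PS} cited in Lemma \ref{Fiber}). The only minor point to mention, if one wishes to be careful, is the continuity of multiplication and inversion in $LU(n)$ with respect to the Fr\'echet topology, which is standard.
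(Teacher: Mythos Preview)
Your proof is correct. Both you and the paper set $\tilde b = aba^{-1}$ and reduce the lemma to showing $aba^{-1}\in L^0U(n)$, but the verifications differ. You invoke the general fact that the identity component of a topological group is normal, exhibiting an explicit path $t\mapsto a\beta(t)a^{-1}$ from $e$ to $aba^{-1}$. The paper instead uses the specific structure of $U(n)$: since $\det:U(n)\to S^1$ induces an isomorphism on $\pi_1$, it suffices to compute the winding number $W(\det(aba^{-1}))=W(\det b)=0$. Your argument is cleaner and more general; the paper's is more concrete and dovetails with the winding-number calculations (e.g.\ equation~(\ref{LUnTransgression})) used elsewhere. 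You are also right that the hypothesis on $[a]$ generating $\pi_1 U(n)$ plays no role in either proof of this lemma.
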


\begin{proof}
This claim is equivalent to the statement that $aba^{-1} \in L^0U(n)$.  To prove this claim, it suffices to show that $[aba^{-1}]$ is the identity element in $\pi_1 U(n)$.  Since $\det : U(n) \to S^1$ induces an isomorphism on fundamental groups, we must show that $[\det (aba^{-1})]$ is trivial in $\pi_1 S^1$.  Its winding number is
\begin{align*}
W( \det(aba^{-1}) ) = W (\det b) = 0.
\end{align*}
The second equality holds because $b \in L^0U(n)$.  Therefore $aba^{-1} \in L^0U(n)$.
\end{proof}

\begin{proposition}
$R_a(j(P))$ admits the structure of a principal $L^0U(n)$-bundle over $LM$ that is also a sub-bundle of $LFrE$.
\end{proposition}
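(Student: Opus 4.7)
The plan is to equip $R_a(j(P))$ with a right $L^0U(n)$-action by restricting the right $LU(n)$-action on $LFrE$, and then to verify the defining axioms of a principal bundle one by one. Because right multiplication by $a$ is a fiber-preserving diffeomorphism of $LFrE$ over $LM$, the set $R_a(j(P))$ surjects onto $LM$ and its fibers are precisely the $R_a$-images of the fibers of $j(P) \to LM$.

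The first substantive step is to show that the right $L^0U(n)$-action on $LFrE$ closes up on $R_a(j(P))$. Given $q = pa$ with $p \in j(P)$ and $b \in L^0U(n)$, the preceding lemma supplies $\tilde{b} \in L^0U(n)$ with $ab = \tilde{b}a$, so $qb = (p\tilde{b})a \in R_a(j(P))$ because $j(P)$ is $L^0U(n)$-invariant. Freeness is inherited from freeness of the $LU(n)$-action on $LFrE$. For transitivity on fibers, I would take two points $p_1 a$, $p_2 a$ over the same loop, use transitivity of the $L^0U(n)$-action on $j(P)$ to produce $b \in L^0U(n)$ with $p_2 = p_1 b$, and then rewrite $ba = ac$ with $c \in L^0U(n)$ by repeating the winding-number argument of the previous lemma (applied to $a^{-1}ba$ in place of $aba^{-1}$).

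For local triviality I would transport local sections: if $\sigma \colon U \to P$ is a local section of $\hat{\pi}$, then $\sigma_a(\gamma) := j(\sigma(\gamma)) \cdot a$ is a local section of the projection $R_a(j(P)) \to LM$ over $U$, which together with the $L^0U(n)$-action yields a local trivialization $U \times L^0U(n) \to R_a(j(P))|_U$. The sub-bundle property is then automatic: by construction the $L^0U(n)$-action on $R_a(j(P))$ is the restriction of the right $LU(n)$-action on $LFrE$, so the inclusion $R_a(j(P)) \hookrightarrow LFrE$ is a principal bundle map covering the identity on $LM$ and intertwining $L^0U(n) \hookrightarrow LU(n)$.

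The one step that genuinely uses the structure of $LU(n)$ rather than being formal is the closure of the $L^0U(n)$-action on $R_a(j(P))$, which amounts to the normality of $L^0U(n)$ in $LU(n)$; this is the sole place where the winding-number identity $W(\det(aba^{-1})) = W(\det b) = 0$ enters. Everything else is a direct consequence of $P \to LM$ being a principal $L^0U(n)$-bundle, so I expect no further obstacle.
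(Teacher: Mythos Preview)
Your proposal is correct and follows essentially the same route as the paper: closure of the $L^0U(n)$-action via the preceding lemma (i.e.\ normality of $L^0U(n)$ in $LU(n)$), freeness inherited from $LFrE$, and transitivity on fibers via the same conjugation argument applied to $a^{-1}ba$. The only minor difference is in verifying local triviality, where you transport local sections of $P$ by $R_a \circ j$ while the paper instead restricts a local trivialization of $LFrE$ using the splitting $LU(n) \cong L^0U(n) \rtimes \mathbb{Z}$; both arguments are routine.
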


\begin{proof}
We define the projection $\pi_a : R_a(j(P)) \to LM$ by $\pi_a \overset{\rm def}{=} \tilde{\pi}|_{R_a(j(P))}$.  We must verify  that 1.) $L^0U(n)$ acts freely on $R_a(j(P))$, 2.) $R_a(j(P))/L^0U(n)$ is diffeomorphic to $LM$ and $\pi_a$ is smooth, and 3.) $R_a(j(P))$ is locally trivial.

We first show that $L^0U(n)$ acts on $R_a(j(P))$.  Let $b \in L^0U(n)$ and $x \in R_a(i(P))$.  Then $x = \tilde{x}\cdot a^{-1}$, for some $\tilde{x} \in j(P)$, and
\begin{align*}
R_b(x) &= \tilde{x}\cdot a^{-1}b^{-1} = \tilde{x} \cdot \tilde{b}^{-1}a^{-1} = R_a( R_{\tilde{b}}(\tilde{x})).
\end{align*}
Since $R_{\tilde{b}}(\tilde{x}) \in j(P)$, it follows that $R_b(x) \in R_a(j(P))$.  Therefore $L^0U(n)$ acts on $R_a(j(P))$.  Moreover, because $LU(n)$ acts freely on $LFrE$, the $L^0U(n)$-action on $R_a(j(P))$ is free as well, verifying 1.).

To verify 2.), let $\gamma_0 \in LM$ and suppose $x, y \in \pi_a^{-1}(\gamma_0)$.  Then $x = \tilde{x} \cdot a^{-1}$ and $y  = \tilde{y} \cdot a^{-1}$ for some $\tilde{x}, \tilde{y} \in j(P)$.  Moreover,
\begin{align*}
\hat{\pi}(\tilde{x}) = \tilde{\pi}(\tilde{x}) = \tilde{\pi}( \tilde{x}\cdot a^{-1}) = \tilde{\pi}(x) = \gamma_0,
\end{align*}
and similarly $\hat{\pi}(\tilde{y}) = \gamma_0$.  Therefore $\tilde{x} = \tilde{y} b$ for some $b \in L^0U(n)$, as $P$ is a principal $L^0U(n)$-bundle.  Therefore
\begin{align*}
x = \tilde{x}\cdot a^{-1} = \tilde{y} \cdot b a^{-1} = \tilde{y} \cdot a^{-1} \tilde{b} = y \cdot \tilde{b},
\end{align*}
since $ba^{-1} = a^{-1} \tilde{b}$ for some $\tilde{b} \in L^0U(n)$.  In particular, we have shown $x = y \cdot \tilde{b}$, proving that any two elements in $\pi_a^{-1}(\gamma_0)$ differ by an element of $L^0U(n)$.  Therefore $R_a(j(P))/L^0U(n) \cong LM$.  Moreover, $\pi_a$ is smooth because it is the restriction of the smooth map $\tilde{\pi}$.

To verify 3.) let $\gamma_0 \in LM$.  There exists some neighborhood $U \subset LM$ such that $LFrE|_U \approx U \times LU(n)$.  Using the isomorphism $LU(n) \cong L^0U(n) \rtimes \mathbb{Z}$ \cite[\textsection 4.7]{PS}, we may take this trivialization $LFrE|_U \approx U \times L^0U(n) \times \mathbb{Z}$.  Similarly, $P|_U \approx U \times L^0FrE$ and we may arrange these trivializations so that the inclusion $j : P|_U \to LFrE|_U$ is the map $U \times L^0U(n) \to U \times L^0U(n) \times \mathbb{Z}$ is given by $(\gamma, b) \mapsto (\gamma, b, 0)$. Thus $j(P)|_U \approx U \times L^0U(n) \times \{0 \}$, and $R_a(j(P))|_U \approx U \times \mathbb{Z} \times \{1\}$, proving that $R_a(j(P))$ is locally trivial.
\end{proof}

\begin{corollary}
When $LFrE$ admits a reduction of its structure group to $L^0U(n)$, $LFrE$ is the disjoint union of a countable collection of principal $L^0U(n)$-bundles over $LM$.
\end{corollary}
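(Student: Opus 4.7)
The plan is to exhibit the countable family of sub-bundles explicitly and to verify the two required properties (disjointness and covering) directly, reusing the preceding proposition together with the decomposition $LU(n) \cong L^0U(n) \rtimes \mathbb{Z}$ from \cite[\textsection 4.7]{PS}.

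Fix once and for all an element $a \in LU(n)$ such that $[a]$ generates $\pi_1 U(n) \cong \mathbb{Z}$, and for each $k \in \mathbb{Z}$ set
\begin{align*}
P_k \overset{\rm def}{=} R_{a^k}(j(P)) \subset LFrE.
\end{align*}
First I would observe that the preceding proposition applies with $a$ replaced by $a^k$ for any nonzero $k \in \mathbb{Z}$, because the only property of $a$ that entered its proof (through the lemma preceding it) was that conjugation by $a$ preserves $L^0U(n)$, a statement about winding numbers that is automatic for any element of $LU(n)$. Thus each $P_k$ is a principal $L^0U(n)$-sub-bundle of $LFrE$ over $LM$, with $P_0 = j(P)$.

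Next I would check pairwise disjointness. Suppose $x \in P_k \cap P_m$ with $k \neq m$; then $x = \tilde{x}\cdot a^{-k} = \tilde{y}\cdot a^{-m}$ for some $\tilde{x},\tilde{y} \in j(P)$, so $\tilde{y} = \tilde{x} \cdot a^{m-k}$. Applying $\tilde{\pi}$ shows $\tilde{x}$ and $\tilde{y}$ lie in the same fiber of $\hat{\pi} : P \to LM$, hence $\tilde{y} = \tilde{x} \cdot b$ for some $b \in L^0U(n)$ by the principal bundle structure on $P$. Free-ness of the $LU(n)$-action on $LFrE$ then forces $a^{m-k} = b \in L^0U(n)$, contradicting the fact that $[a^{m-k}]$ has winding number $m - k \neq 0$ in $\pi_1 U(n)$.

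For the covering step, given $x \in LFrE$ I would choose any $\tilde{x} \in j(P)$ with $\hat{\pi}(\tilde{x}) = \tilde{\pi}(x)$, which is possible because $\hat{\pi}$ is surjective. Since $LFrE$ is a principal $LU(n)$-bundle there is a unique $c \in LU(n)$ with $x = \tilde{x} \cdot c$. Using $LU(n) \cong L^0U(n) \rtimes \mathbb{Z}$, we may write $c = b\cdot a^{-k}$ for a unique pair $(b,k) \in L^0U(n) \times \mathbb{Z}$. Setting $\tilde{y} = \tilde{x}\cdot b \in j(P)$ (which lies in $j(P)$ because $L^0U(n)$ preserves $j(P)$) gives $x = \tilde{y}\cdot a^{-k} \in P_k$, so $LFrE = \bigsqcup_{k \in \mathbb{Z}} P_k$, completing the proof.

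The step I expect to require the most care is step one, confirming that the preceding proposition truly applies to each $a^k$ rather than only to the chosen generator $a$; this is a minor but genuine point because the lemma and proposition are stated specifically for $a$ with $[a]$ generating $\pi_1 U(n)$, while the arguments themselves only use winding-number additivity. The disjointness and covering steps are then essentially bookkeeping with the semi-direct product decomposition.
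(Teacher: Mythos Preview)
Your proposal is correct and is essentially the explicit version of what the paper leaves implicit: the paper states the corollary without proof, relying on the reader to combine the preceding proposition (that $R_a(j(P))$ is a principal $L^0U(n)$-sub-bundle) with the decomposition $LU(n) \cong L^0U(n) \rtimes \mathbb{Z}$ to obtain the countable partition $LFrE = \bigsqcup_{k\in\mathbb{Z}} R_{a^k}(j(P))$. Your observation that the lemma and proposition really only use the winding-number identity $W(\det(aba^{-1})) = W(\det b)$, and hence apply to any power $a^k$, is exactly the small gap one must fill, and you have identified and handled it correctly.
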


It follows that we have countably many $L^0U(n)$-bundles to choose for our reduced bundle, indexed by $\pi_1 U(n) \cong \mathbb{Z}$.  However, under the inclusion $FrE \hookrightarrow LFrE$ taking a point to the constant loop based there, only one such $L^0U(n)$-bundle contains $FrE$.  Hence only one $L^0U(n)$-bundle contains the fixed point set of the $S^1$-action on $LFrE$.  This distinguishes a canonical choice of reduced bundle, which we denote $L^0FrE$.

In the next sections we study $S^1$-equivariant Chern-Weil techniques on pushdown bundles admitting this structure group reduction.  We end this section with two examples of such pushdown bundles.

\begin{example}\label{SUnBundles}
Suppose $E \to M$ is a complex bundle with $c_1(E) = 0$.  Then $\tau^* c_1(E) = 0$ and $LFrE$ admits a reduction of its structure group to $L^0U(n)$.

In this case, one may prove directly that $LFrE$ admits this reduction of its structure group.  For if $c_1(E) = 0$, $FrE$ admits a reduction of its structure group to $SU(n)$.  Call the reduced bundle $SFrE \to M$, a principal $SU(n)$-bundle over $M$.  Then $LSFrE \to LM$ is a sub-bundle of $LFrE$ whose structure group is $LSU(n)$.  Because $LSU(n) \subset L^0U(n)$, $LSFrE$ is our desired reduced bundle.
\end{example}

\begin{example}\label{ProjectiveSpaceBundles}
We present a sequence of a non-trivial bundles $E_k \to \mathbb{RP}^k$ such that $LFrE_k \to L\mathbb{RP}^k$ admits a reduction of its structure group to $L^0U(n)$.  We first note that $H^2( \mathbb{R P}^k ; \mathbb{Z} ) = \mathbb{Z}/2 \mathbb{Z}$ for $k \geq 2$.  Since the second cohomology group with $\mathbb{Z}$ coefficients parameterizes complex line bundles over a manifold, there exists a complex line bundle $E_k \to \mathbb{RP}^k$ such that $c_1(E_k) \neq 0$ but $2c_1(E_k) = 0$.  

It follows that $2 \tau^* c_1(E_k) = \tau^*(2c_1(E_k)) = 0$ in $H^1(L\mathbb{RP}^k ; \mathbb{Z})$.  By Lemma \ref{TorsionFree}, $H^1(L\mathbb{RP}^k; \mathbb{Z})$ is torsion-free, implying $\tau^* c_1(E_k) = 0$.  Therefore by Proposition \ref{ReductionCriterion}, $LFrE_k \to L\mathbb{RP}^k$ admits a reduction of its structure group to $L^0U(n)$.

We remark that when $k$ is odd, $\mathbb{RP}^k$ is closed and orientable, so we have a non-trivial class of bundles that admit this reduction of structure group even if we only consider closed and orientable manifolds.
\end{example}

\subsection{$S^1$-action on $L^0FrE$}\label{ReducedCircleAction}

Suppose $LFrE$ admits a reduction of its structure group to $L^0U(n)$ and let $L^0FrE$ be the reduced bundle.  That is, $\hat{\pi} : L^0FrE \to LM$ is a principal $L^0U(n)$-bundle over $LM$ such that there is an embedding $j : L^0FrE \to LFrE$ which satisfies $\hat{\pi} = \tilde{\pi} \circ j$ and $j \circ R_a = R_a \circ j$ for $a \in L^0U(n)$.

\begin{center}
\begin{tikzcd}
L^0FrE \arrow[hook]{r}{j}\arrow{rd}[swap]{\hat{\pi}}
&LFrE \arrow{d}{\tilde{\pi}}\\
&LM
\end{tikzcd}
\end{center}

\begin{lemma}\label{InvariantImage}
For all $\theta \in S^1$, $k_\theta( j(L^0FrE)) = j(L^0FrE)$.
\end{lemma}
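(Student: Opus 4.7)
The plan is to use the decomposition of $LFrE$ established in the previous proposition together with connectedness of $S^1$-orbits. From that proposition, $LFrE = \bigsqcup_{k \in \mathbb{Z}} P_k$, where $P_0 = j(L^0FrE)$ and $P_k$ is the translate of $j(L^0FrE)$ by the $k$-th power of a representative $a$ of the generator of $\pi_1 U(n)$. So the task reduces to showing that $k_\theta$ preserves $P_0$ for every $\theta \in S^1$.

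The main step will be to observe that each $P_k$ is clopen in $LFrE$. This follows at once from the local trivialization already built in the proof of the previous proposition: over any sufficiently small $U \subset LM$, we have $LFrE|_U \approx U \times L^0U(n) \times \mathbb{Z}$ with the $\mathbb{Z}$-factor discrete, and the restrictions $P_k|_U$ correspond precisely to the sheets $U \times L^0U(n) \times \{k\}$. Since clopenness is a local property, each $P_k$ is clopen in $LFrE$.

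With this in place, the lemma is nearly immediate. For any $\gamma \in P_0$, the orbit $\{k_\theta \gamma : \theta \in S^1\}$ is the continuous image of the connected space $S^1$, hence connected; since the $P_k$ are pairwise disjoint clopen sets and this orbit meets $P_0$ at $\theta = 0$, it must lie entirely in $P_0$. This gives $k_\theta(j(L^0FrE)) \subseteq j(L^0FrE)$ for every $\theta$, and the reverse inclusion follows by applying the same argument to $k_{-\theta}$. I expect no real obstacle: the only mildly delicate point is confirming that the local trivialization from the previous proposition genuinely identifies the $P_k$'s with the discrete sheets, but this is built into that construction via the splitting $LU(n) \cong L^0U(n) \rtimes \mathbb{Z}$ of Pressley--Segal, so no further work is needed.
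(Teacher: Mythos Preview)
Your proposal is correct and follows essentially the same approach as the paper: both arguments use the decomposition of $LFrE$ into the clopen pieces $P_k$ (equivalently, that $j(L^0FrE)$ is a union of connected components of $LFrE$) together with the connectedness of $S^1$-orbits to conclude that $k_\theta$ cannot move points out of $j(L^0FrE)$. The paper phrases the connectedness step via the path $t \mapsto k_{t\theta}(x)$ and works with connected components directly, while you phrase it via the continuous image of $S^1$ and clopenness, but these are the same argument.
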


\begin{proof}
Suppose $U \subset LFrE$ is a connected component and let $\theta \in S^1$.  We claim $k_\theta(U) = U$.  Let $x \in U$.  Then the path $t \mapsto k_{t \theta}(x)$, $0 \leq t \leq 1$, joins $x$ to $k_\theta(x)$.  Because $k_\theta$ is a diffeomorphism $k_\theta(U)$ is a connected component, implying $x \in k_\theta(U)$.  Therefore $k_\theta(U) = U$.

Because $j : L^0FrE \to LFrE$ is a local diffeomorphism, $j(L^0FrE)$ is an open submanifold and we can write $j(L^0FrE) = \amalg U_\alpha$ for connected components $U_\alpha \subset LFrE$.  Since $k_\theta(U_\alpha) = U_\alpha$ for all $\alpha$, it follows that $k_\theta(j(L^0FrE)) = j(L^0FrE)$.
\end{proof}

In light of Lemma \ref{InvariantImage}, we define the following $S^1$-action on $L^0FrE$.  Given $\theta \in S^1$ and $x \in L^0FrE$, 
\begin{align*}
\hat{k}_\theta(x) \overset{\rm def}{=} j^{-1} k_\theta j(x).
\end{align*}
$\hat{k}$ is indeed a group action, for if $\theta_1, \theta_2 \in S^1$,
\begin{align*}
\hat{k}_{\theta_1} \circ \hat{k}_{\theta_2}(x) = j^{-1} k_{\theta_1} j (j^{-1} k_{\theta_2} j(x) ) = j^{-1} k_{\theta_1 + \theta_2} j(x) = \hat{k}_{\theta_1 + \theta_2}(x).
\end{align*}

\begin{proposition}
$L^0FrE$ admits an $S^1$-action such that $j : L^0FrE \to LFrE$ is an $S^1$-equivariant map.
\end{proposition}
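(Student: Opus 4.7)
The plan is to verify three things about the map $\hat k_\theta(x) = j^{-1}k_\theta j(x)$ introduced just above the proposition: (a) it is well defined on all of $L^0FrE$, (b) it depends smoothly on $(\theta,x)$ and defines a smooth $S^1$-action (the group law itself has already been checked), and (c) the identity $j\circ\hat k_\theta = k_\theta\circ j$ holds, which is the required equivariance.

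First I would address well-definedness. The formula $j^{-1}k_\theta j(x)$ makes sense only if $k_\theta j(x)$ lies in $j(L^0FrE)$, which is exactly the content of Lemma \ref{InvariantImage}. Moreover, because the structure-group reduction presents $L^0FrE$ as a sub-bundle of $LFrE$, the map $j$ is an embedding and in particular a diffeomorphism onto the open submanifold $j(L^0FrE)\subset LFrE$, so $j^{-1}$ is unambiguously defined on $j(L^0FrE)$. Thus $\hat k_\theta:L^0FrE\to L^0FrE$ is a well-defined set map for each $\theta\in S^1$.

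Next I would record smoothness. Since $k_\theta$ is a smooth diffeomorphism of $LFrE$ that preserves $j(L^0FrE)$, its restriction to $j(L^0FrE)$ is a smooth diffeomorphism of $j(L^0FrE)$, and composing with the diffeomorphisms $j$ and $j^{-1}$ on either side yields a smooth diffeomorphism $\hat k_\theta$ of $L^0FrE$. Joint smoothness in $(\theta,x)$ follows from joint smoothness of the original $S^1$-action on $LFrE$ together with smoothness of $j^{\pm 1}$ on the open submanifold $j(L^0FrE)$. Combined with the group-law computation displayed just before the proposition statement, this shows that $\hat k$ is a smooth $S^1$-action on $L^0FrE$.

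Finally, the equivariance of $j$ is immediate from the very definition of $\hat k_\theta$: for every $x\in L^0FrE$,
\begin{equation*}
j(\hat k_\theta(x)) \;=\; j\bigl(j^{-1}k_\theta j(x)\bigr) \;=\; k_\theta(j(x)),
\end{equation*}
so $j\circ\hat k_\theta = k_\theta\circ j$, as required. In this argument there is no genuine obstacle to overcome, since the nontrivial input, namely that the $S^1$-action on $LFrE$ preserves the image $j(L^0FrE)$, is already secured by Lemma \ref{InvariantImage}; everything else is a formal consequence of $j$ being an $S^1$-invariant embedding as a sub-bundle.
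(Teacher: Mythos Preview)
Your proof is correct and follows essentially the same approach as the paper: the definition $\hat k_\theta = j^{-1}k_\theta j$ together with Lemma~\ref{InvariantImage} gives well-definedness, and the equivariance $j\circ\hat k_\theta = k_\theta\circ j$ is then immediate from the definition. The paper's proof records only the equivariance computation, taking well-definedness and smoothness as understood from the preceding discussion; your explicit verification of these points is a harmless elaboration.
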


\begin{proof}
To prove this proposition, we need only show that $i$ is an $S^1$-equivariant map with respect to these $S^1$-actions.  A straightforward calculation shows
\begin{align*}
j (\hat{k}_\theta(x)) &= j( j^{-1} k_\theta j(x)) = k_\theta j(x),
\end{align*}
proving $j$ is $S^1$-equivariant.
\end{proof}

\begin{corollary}
$j : L^0FrE \to LFrE$ induces a map $j^* : H_{S^1}^*(LFrE) \to H_{S^1}^*(L^0FrE)$.
\end{corollary}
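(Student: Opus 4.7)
The plan is to verify that the Cartan model construction of $H_{S^1}^*$ is functorial with respect to $S^1$-equivariant maps, and then apply this to $j$. Since $j$ was just shown to be $S^1$-equivariant, this should be essentially a formality, but the key is to check that the pullback $j^*$ is well defined at the level of cochain complexes in the Cartan model of Section \ref{CartanModel}.

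Let $X$ denote the vector field on $LFrE$ induced by $k_\theta$ and let $\hat X$ denote the corresponding vector field on $L^0FrE$ induced by $\hat k_\theta$. First I would extend the ordinary pullback of forms to the Cartan model by setting $j^*(\alpha \otimes u^m) = (j^*\alpha) \otimes u^m$ for $\alpha \in \Lambda^*(LFrE)$. Two things need to be verified: (a) if $\alpha \in \Lambda_{S^1}^*(LFrE)$ then $j^*\alpha \in \Lambda_{S^1}^*(L^0FrE)$, and (b) $j^*(d - u i_X) = (d - u i_{\hat X})j^*$ as operators on $\Lambda_{S^1}^*(LFrE)[u]$.

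For (a), $S^1$-equivariance of $j$ gives $j \circ \hat k_\theta = k_\theta \circ j$, so if $k_\theta^*\alpha = \alpha$ then $\hat k_\theta^*(j^*\alpha) = (k_\theta \circ j)^*\alpha = (j \circ \hat k_\theta)^*\alpha = j^*\alpha$. For (b), the fact that $j^*$ commutes with $d$ is standard. To see that $j^* i_X = i_{\hat X} j^*$, differentiate the equivariance relation $j \circ \hat k_\theta = k_\theta \circ j$ at $\theta = 0$ to obtain $j_* \hat X = X \circ j$; then for any $\alpha \in \Lambda^*(LFrE)$ and tangent vectors $v_2, \ldots, v_k \in T_p L^0FrE$,
\begin{align*}
(i_{\hat X} j^*\alpha)_p(v_2, \ldots, v_k) &= (j^*\alpha)_p(\hat X_p, v_2, \ldots, v_k)\\
&= \alpha_{j(p)}(j_*\hat X_p, j_*v_2, \ldots, j_*v_k)\\
&= \alpha_{j(p)}(X_{j(p)}, j_*v_2, \ldots, j_*v_k)\\
&= (j^* i_X \alpha)_p(v_2, \ldots, v_k).
\end{align*}
Combining these gives $j^*(d - u i_X) = (d - u i_{\hat X}) j^*$ on the Cartan complex.

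The main (and only) content of the proof is this compatibility check, after which $j^*$ is a cochain map between $(\Lambda_{S^1}^*(LFrE)[u], d - u i_X)$ and $(\Lambda_{S^1}^*(L^0FrE)[u], d - u i_{\hat X})$, and thus descends to a well defined map $j^*: H_{S^1}^*(LFrE) \to H_{S^1}^*(L^0FrE)$ on cohomology. There is no real obstacle here; the corollary is a direct consequence of the functoriality of the Cartan model together with the preceding proposition that establishes $S^1$-equivariance of $j$.
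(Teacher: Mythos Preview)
Your proposal is correct and is essentially what the paper has in mind: the paper states the corollary with no proof at all, treating it as an immediate consequence of the preceding proposition that $j$ is $S^1$-equivariant. Your argument simply fills in the routine functoriality check for the Cartan model that the paper leaves implicit.
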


\subsection{$S^1$-equivariant first Chern class}\label{EquFCCDefn}

Consider $j^* {\rm Tr}(\widetilde{\Omega} - u \tilde{\omega}(X))$, an equivariant 2-form on $L^0FrE$.

\begin{proposition}
$i^*{\rm Tr}(\widetilde{\Omega} - u \tilde{\omega}(X))$ is basic.
\end{proposition}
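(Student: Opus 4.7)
The plan is to verify the two defining properties of a basic form on the principal $L^0U(n)$-bundle $\hat{\pi}: L^0FrE \to LM$: horizontality with respect to vertical vectors, and invariance under the right $L^0U(n)$-action. The hard work has essentially been done already in equation (\ref{LUnTransgression}); the point is simply that the obstruction to invariance detected there vanishes precisely on the identity component $L^0U(n)$.

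First I would treat horizontality. Write $j^*\mathrm{Tr}(\tilde{\Omega} - u\tilde{\omega}(X)) = j^*\mathrm{Tr}\,\tilde{\Omega} - u\, j^*\mathrm{Tr}\,\tilde{\omega}(X)$. The coefficient of $u$ is a $0$-form, so horizontality there is automatic. For the $2$-form part, note that $j$ is an equivariant embedding of principal bundles covering the identity on $LM$, hence its differential maps vertical vectors in $L^0FrE$ (tangents to $L^0U(n)$-orbits) into vertical vectors in $LFrE$. Since $\tilde{\Omega}$ is the curvature of the connection $\tilde{\omega}$, it is horizontal on $LFrE$, and so $j^*\mathrm{Tr}\,\tilde{\Omega}$ vanishes on vertical vectors.

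Next I would handle $L^0U(n)$-invariance. For $a \in L^0U(n)$, the equivariance $j \circ R_a = R_a \circ j$ gives
\begin{equation*}
R_a^* j^* \mathrm{Tr}(\tilde{\Omega} - u\tilde{\omega}(X)) = j^* R_a^* \mathrm{Tr}(\tilde{\Omega} - u\tilde{\omega}(X)).
\end{equation*}
Now invoke the computation (\ref{LUnTransgression}), which holds for all $a \in LU(n)$:
\begin{equation*}
R_a^*\mathrm{Tr}(\tilde{\Omega} - u\tilde{\omega}(X)) = \mathrm{Tr}(\tilde{\Omega} - u\tilde{\omega}(X)) - u i W(\det a).
\end{equation*}
Since $L^0U(n)$ is by definition the connected component of the identity in $LU(n)$, any $a \in L^0U(n)$ is homotopic to the constant loop at $I$, so $\det a : S^1 \to S^1$ is null-homotopic and $W(\det a) = 0$. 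Therefore the right-hand side reduces to $\mathrm{Tr}(\tilde{\Omega} - u\tilde{\omega}(X))$, and applying $j^*$ gives $R_a^* j^*\mathrm{Tr}(\tilde{\Omega} - u\tilde{\omega}(X)) = j^*\mathrm{Tr}(\tilde{\Omega} - u\tilde{\omega}(X))$.

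Combining the two properties, $j^*\mathrm{Tr}(\tilde{\Omega} - u\tilde{\omega}(X))$ is both horizontal and $L^0U(n)$-invariant on $L^0FrE$, hence basic. The only subtle point is the invariance step, and as noted, this is not really an obstacle: the obstruction cocycle $u i W(\det a)$ in (\ref{LUnTransgression}) is tailor-made to vanish on $L^0U(n)$, which is precisely the motivation for passing to the reduced bundle in this section.
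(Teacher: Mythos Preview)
Your proof is correct and follows essentially the same approach as the paper's own argument: verify horizontality using that $j_*$ sends vertical vectors to vertical vectors and that $\tilde{\Omega}$ is horizontal (with the $0$-form term being trivially horizontal), then verify $L^0U(n)$-invariance by combining $j\circ R_a = R_a\circ j$ with equation~(\ref{LUnTransgression}) and the observation that $W(\det a)=0$ for $a\in L^0U(n)$.
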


\begin{proof}
We must show that $j^* {\rm Tr}(\widetilde{\Omega} - u \tilde{\omega}(X))$ is horizontal and $L^0U(n)$-invariant.  We first show it is horizontal.  Let $v \in T_xL^0FrE$ be a vertical vector.  Then $j_*v$ is a vertical vector as well, since $L^0FrE$ is a sub-bundle, and we have
\begin{align*}
\iota_v j^*{\rm Tr}\, \widetilde{\Omega}(w) = j^*{\rm Tr}\, \widetilde{\Omega}(v, w) = {\rm Tr}\, \widetilde{\Omega}(j_*v, j_*w) = 0,
\end{align*}
since ${\rm Tr}\, \widetilde{\Omega}$ is horizontal.  Moreover, $\iota_v j^*{\rm Tr}\, \tilde{\omega}(X) = 0$ by definition.  Therefore $\iota_v j^*{\rm Tr}(\widetilde{\Omega} - u \tilde{\omega}(X)) = 0$, proving $j^*{\rm Tr}(\widetilde{\Omega} -u \tilde{\omega}(X))$ is horizontal.

Next we show it is $L^0U(n)$-invariant.  Let $a \in L^0U(n)$.  Then $W(\det a) = 0$ because the winding number of a loop is homotopy-invariant.  Since $j \circ R_a = R_a \circ j$ we have
\begin{align*}
R_a^* j^* {\rm Tr}(\widetilde{\Omega} - u \tilde{\omega}(X)) = j^* R_a^* {\rm Tr}(\widetilde{\Omega} - u \tilde{\omega}(X)) = j^* {\rm Tr}(\widetilde{\Omega} - u \tilde{\omega}(X)),
\end{align*}
by equation (\ref{LUnTransgression}).  Therefore $j^*{\rm Tr}(\widetilde{\Omega} - u \tilde{\omega}(X))$ is $L^0U(n)$-invariant.
\end{proof}

It follows that $j^*{\rm Tr}(\widetilde{\Omega} - u \tilde{\omega}(X)) = \hat{\pi}^* \beta$, for some equivariant 2-form $\beta$ on $LM$.

\begin{mydef}
The $S^1$-equivariant first Chern class of $\mathcal{E}$ is $c_1^{S^1}(\mathcal{E}) \overset{\rm def}{=} [\beta] \in H_{S^1}^2(LM)$, and $\beta$ is the $S^1$-equivariant first Chern form.
\end{mydef}
This $S^1$-equivariant characteristic class is notable because it admits an $S^1$-equivariant Chern-Weil construction.

Let $i : M \to LM$ be the embedding sending $x$ to the constant loop based at $x$.  Standard Chern-Weil constructions show that ${\rm tr}\, \Omega$ is a basic form on $FrE$, implying ${\rm tr}\, \Omega = \hat{\beta}$ for some closed 2-form $\hat{\beta}$ on $M$ and $[\hat{\beta}] = c_1(E)$.

\begin{theorem}\label{FCCExtension}
$c_1^{S^1}(\mathcal{E})$ extends $c_1(E)$ under the embedding $M \hookrightarrow LM$.  Moreover, the $S^1$-equivariant first Chern form of $\mathcal{E}$ and $\tilde{\omega}$ extends the ordinary first Chern form of $E$ and $\omega$.
\end{theorem}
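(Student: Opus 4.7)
The plan is to prove the stronger form-level statement—that under the embedding $i\colon M\hookrightarrow LM$ of constant loops, $i^*\beta = \hat\beta$—and deduce the cohomological statement as an immediate consequence. My strategy is to lift the identity to the frame bundles, where the forms are intrinsically defined: since $\pi\colon FrE\to M$ is a surjective submersion, $\pi^*$ is injective on differential forms, so it suffices to verify
\[
\pi^* i^*\beta \;=\; \pi^*\hat\beta \;=\; {\rm tr}\,\Omega.
\]
To set this up, I would first observe that constant loops in $FrE$ lie in the fixed-point set of the $S^1$-action on $LFrE$, and by the canonical choice of reduced bundle fixed in Section~\ref{StructureGroupReduction} they are contained in $L^0FrE$. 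Let $i_{Fr}\colon FrE\hookrightarrow L^0FrE$ denote this embedding; it sits in a commutative square with $i$ via the projections $\pi$ and $\hat\pi$, and $j\circ i_{Fr}$ is precisely the inclusion of $FrE$ into $LFrE$ as constant loops. Applying $i_{Fr}^*$ to the defining identity $\hat\pi^*\beta = j^*{\rm Tr}(\widetilde\Omega - u\tilde\omega(X))$ then reduces the problem to computing the equivariant curvature form along the constant-loop locus.

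I would then evaluate the two summands separately. The rotation vector field $X$ vanishes identically on constant loops, so $(j\circ i_{Fr})^*\bigl(u\tilde\omega(X)\bigr) = 0$. For the curvature term, I would reuse the computation performed in the non-vanishing criterion at the end of Section~\ref{PrincipalBundleApproach}: at a constant loop based at $p$, the pushforwards $(j\circ i_{Fr})_*Y$ and $(j\circ i_{Fr})_*Z$ are $\theta$-independent vector fields along the loop, the integrand ${\rm tr}\,\widetilde\Omega(\theta)$ equals ${\rm tr}\,\Omega_p$ independently of $\theta$, and the normalized $\theta$-integral defining ${\rm Tr}$ returns $(j\circ i_{Fr})^*{\rm Tr}\,\widetilde\Omega = {\rm tr}\,\Omega$ as a form on $FrE$. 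Combining these two evaluations gives $\pi^* i^*\beta = {\rm tr}\,\Omega = \pi^*\hat\beta$, and injectivity of $\pi^*$ yields $i^*\beta = \hat\beta$. Passing to de Rham cohomology then gives $i^* c_1^{S^1}(\mathcal{E}) = c_1(E)$, establishing both claims.

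The main subtlety—more conceptual than technical—is ensuring the lift $i_{Fr}$ is well defined, i.e. that $FrE$ embeds into the particular connected subbundle $L^0FrE$ rather than into some other $L^0U(n)$-reduction of $LFrE$. This is precisely the characterizing property used in Section~\ref{StructureGroupReduction} to single out $L^0FrE$ among the countably many $L^0U(n)$-reductions. Once that bookkeeping is in place, everything else follows from the vanishing of $X$ on the fixed-point set and the $\theta$-constancy of $\widetilde\Omega$ when pulled back from $FrE$, both of which are essentially definitional.
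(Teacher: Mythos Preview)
Your proposal is correct and follows essentially the same approach as the paper: both reduce to showing $i^*\beta=\hat\beta$ by lifting to the frame bundle, observing that $X$ (hence $\tilde\omega(X)$) vanishes on constant loops, and that ${\rm Tr}\,\widetilde\Omega$ restricts to ${\rm tr}\,\Omega$ there. The only cosmetic difference is that you package the argument via the commutative square $\hat\pi\circ i_{Fr}=i\circ\pi$ and injectivity of $\pi^*$, whereas the paper carries out the identical computation pointwise by explicitly lifting tangent vectors from $M$ to $FrE$ and then to constant loops.
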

 
\begin{proof}
To prove this theorem, it suffices to show that $i^*\beta = \hat{\beta}$.  Recall that we may write $\beta = \beta_{[2]} + u \beta_{[0]}$ for differential forms $\beta_{[k]}$ of degree $k$.

Suppose $v, w \in T_xM$.  We may take lifts $\bar{v}, \bar{w} \in T_yFrE$ for some $y \in FrE$ such that $\pi(y) = x$.  The vectors $i_*v, i_*w$ are ``constant'' in the sense that they are represented by the constant loops $t \mapsto v$ and $t \mapsto w$ in $TLM = LTM$.  Similarly, we may lift $i_*v$ and $i_*w$ to the constant loops $t \mapsto \bar{v}$ and $t \mapsto \bar{w}$, which we write $\overline{i_*v}$ and $\overline{i_*w}$.  Then
\begin{align*}
i^* \beta_{[2]}(v, w) &= \beta_{[2]}(i_*v, i_*w) = j^*{\rm Tr}\, \widetilde{\Omega}( \overline{i_*v}, \overline{i_*w} )
\end{align*}
The vectors $j_* \overline{i_*v}$ and $j_* \overline{i_*w}$ are also ``constant'' in the sense that they are represented by the same constant loops $t \mapsto \bar{v}$ and $t \mapsto \bar{w}$, following from the fact that $j_* : T_xL^0FrE \to T_{i(x)}LFrE$ is an isomorphism.  Moreover,
\begin{align*}
j^*{\rm Tr}\, \widetilde{\Omega}( \overline{i_*v}, \overline{i_*w} ) &= {\rm Tr}\, \widetilde{\Omega}(j_* \overline{i_*v}, j_* \overline{i^* w}) = \frac{1}{2\pi} \int_{S^1} {\rm tr}\, \Omega(\bar{v}, \bar{w}) dt\\
&= {\rm tr}\, \Omega(\bar{v}, \bar{w}) = \hat{\beta}(v, w).
\end{align*}
Therefore $i^* \beta_{[2]} = \hat{\beta}$.

Next, let $x \in M$.  Then $i(x)$ is a constant loop, which for notational ease we call $x$.  We may lift $x$ to $y \in L^0FrE$.  Thus $j(y)$ is a constant loop in the fiber over the constant loop $x$, so that its velocity vector field $\frac{d}{dt} j(y)$ vanishes, and
\begin{align*}
i^* \beta_{[0]}(x) &= -j^*{\rm Tr}\, \tilde{\omega}\left(\frac{d}{dt} y \right) = -{\rm Tr}\, \tilde{\omega}\left( \frac{d}{dt} j(y) \right) = 0.
\end{align*}
Therefore
\begin{align*}
i^*\beta = i^*\beta_{[2]} + u i^* \beta_{[0]} = \hat{\beta}.
\end{align*}
\end{proof}

In summary, when $\tau^* c_1(E) = 0$ in $H^1(LM; \mathbb{Z})$, $S^1$-equivariant Chern-Weil techniques define $c_1^{S^1}(\mathcal{E})$, an $S^1$-equivariant first Chern class extending $c_1(E) \in H^2(M)$ to $H_{S^1}^2(LM)$.  The following criterion may detect bundles for which this class does not vanish.

\begin{proposition}\label{NontrivialFCCCriterion}
Suppose $E \to M$ is a complex bundle such that $\tau^* c_1(E) = 0$ and $c_1(E) \neq 0$ in $H^2(M ; \mathbb{C})$.  Then $c_1^{S^1}(\mathcal{E}) \neq 0$.
\end{proposition}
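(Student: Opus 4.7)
The plan is to deduce nontriviality of $c_1^{S^1}(\mathcal{E})$ from the extension property established in Theorem \ref{FCCExtension} by pulling back along the inclusion of $M$ as the constant loops. First I would observe that the embedding $i : M \hookrightarrow LM$ is $S^1$-equivariant when $M$ is given the trivial $S^1$-action (since constant loops are fixed by rotation). This equivariance guarantees that $i^*$ induces a well-defined pullback $i^* : H_{S^1}^*(LM) \to H_{S^1}^*(M)$ in $S^1$-equivariant cohomology. Because the $S^1$-action on $M$ is trivial, the induced vector field $X$ vanishes on $M$, so the equivariant differential $d - ui_X$ reduces to the ordinary de Rham differential, giving
\[
H_{S^1}^2(M) \;\cong\; H^2(M;\mathbb{C}) \,\oplus\, u\cdot H^0(M;\mathbb{C}).
\]

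Next I would apply Theorem \ref{FCCExtension} at the level of forms. Writing $\beta = \beta_{[2]} + u\beta_{[0]}$ for the $S^1$-equivariant first Chern form of $\mathcal{E}$, that theorem supplies $i^*\beta_{[2]} = \hat{\beta}$ (a representative of $c_1(E)$) and $i^*\beta_{[0]} = 0$. Passing to cohomology,
\[
i^* c_1^{S^1}(\mathcal{E}) \;=\; [\,i^*\beta\,] \;=\; [\hat{\beta}] \;=\; c_1(E) \quad\text{in } H_{S^1}^2(M).
\]
The conclusion is then immediate: if $c_1^{S^1}(\mathcal{E})$ were zero in $H_{S^1}^2(LM)$, applying $i^*$ would force $c_1(E) = 0$ in $H^2(M;\mathbb{C})$, contradicting the hypothesis.

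There is essentially no obstacle here beyond verifying the naturality of the pullback in the Cartan model, which is standard once one observes that the equivariance of $i$ is compatible with the trivial action on $M$. The substantive work has already been done in Theorem \ref{FCCExtension}, which requires the structure group reduction to $L^0U(n)$ (guaranteed here by the hypothesis $\tau^*c_1(E) = 0$ via Proposition \ref{ReductionCriterion}) in order to make sense of $c_1^{S^1}(\mathcal{E})$ as a class on $LM$ in the first place. Thus the proposition follows as a short corollary of the extension theorem and the functoriality of equivariant de Rham cohomology.
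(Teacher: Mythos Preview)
Your argument is correct and is essentially the same as the paper's: both invoke Theorem \ref{FCCExtension} to conclude $i^* c_1^{S^1}(\mathcal{E}) = c_1(E) \neq 0$, whence $c_1^{S^1}(\mathcal{E}) \neq 0$. The paper's proof is a single line, while you have simply unpacked the reasons why $i^*$ is well defined on equivariant cohomology and where the image lands in $H_{S^1}^2(M)$.
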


\begin{proof}
By Theorem \ref{FCCExtension}, $i^*c_1^{S^1}(\mathcal{E}) = c_1(E) \neq 0$, implying $c_1^{S^1}(\mathcal{E}) \neq 0$.
\end{proof}

By this proposition, to find a non-trivial instance of $c_1^{S^1}(\mathcal{E})$ we need only find a bundle $E \to M$ with non-torsion $c_1(E) \in H^2(M ; \mathbb{Z})$ belonging to the kernel of $\tau^* : H^2(M ; \mathbb{Z}) \to H^1(LM ; \mathbb{Z})$.  In the next section we point out a collection of such bundles.

\subsection{$c_1^{S^1}(\mathcal{E})$ on the loop space of a Riemann surface}\label{NontrivialExamples}

In this section we study the transgression map on compact Riemann surfaces and we show that, when $g \geq 2$, the loop space of $\Sigma_g$, the compact Riemann surface of genus $g$, admits pushdown bundles with non-trivial $c_1^{S^1}(\mathcal{E})$.  To prove this result we consider the \emph{transgression map on homology}, $\tau_* : H_1(LM) \to H_2(M)$, defined as the composition
\begin{align*}
H_1(LM) \to H_2(LM \times S^1) \overset{ev_*}{\to} H_2(M),
\end{align*}
where the first arrow is given by the homology cross product with the generator of $H_1(S^1)$.  Our two transgression maps enjoy the adjoint property
\begin{align*}
\langle \tau^* \omega, \beta \rangle = \langle \omega, \tau_* \beta \rangle.
\end{align*}
We begin with two lemmas.

\begin{lemma}\label{TorusCycle}
For any manifold $M$, if $\beta$ is a loop on $LM$, then $\tau_* \beta$ is a 2-cycle on $M$ given by a map $T^2 \to M$.
\end{lemma}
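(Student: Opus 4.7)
The plan is to unpack the definition of $\tau_*$ and observe that each step corresponds to a natural geometric construction whose composite produces a map out of $T^2$. The key idea is that a loop $\beta$ in $LM$ is itself a map $S^1 \to LM = \mathrm{Maps}(S^1, M)$, and by the exponential law this is essentially a map $S^1 \times S^1 \to M$; the transgression construction simply makes this adjunction precise at the level of cycles.

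Concretely, I will first choose a smooth representative $\beta : S^1 \to LM$ of the given homology class. The cross product with $[S^1] \in H_1(S^1)$ is represented geometrically by the product cycle: namely the map
\begin{equation*}
\Phi : S^1 \times S^1 \longrightarrow LM \times S^1, \qquad \Phi(s, t) = (\beta(s), t),
\end{equation*}
which is a 2-cycle on $LM \times S^1$ representing $[\beta] \times [S^1]$. Next I will apply the evaluation map $ev : LM \times S^1 \to M$, $ev(\gamma, t) = \gamma(t)$, to this cycle. By functoriality of $ev_*$, the class $ev_*([\beta] \times [S^1]) = \tau_* [\beta]$ is represented by the composite
\begin{equation*}
ev \circ \Phi : T^2 = S^1 \times S^1 \longrightarrow M, \qquad (s, t) \longmapsto \beta(s)(t).
\end{equation*}
This is the desired 2-cycle on $M$ arising from a map $T^2 \to M$.

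There is essentially no obstacle here beyond keeping track of the definitions, since both the cross product with $[S^1]$ and the evaluation map preserve the representation of homology classes by maps from closed manifolds. The only mild subtlety worth mentioning is smoothness: since $\beta$ can be chosen smooth (as loops in $LM$ represent the same singular homology class as smooth loops), the adjoint map $T^2 \to M$, $(s,t) \mapsto \beta(s)(t)$, is automatically smooth because evaluation $LM \times S^1 \to M$ is smooth in the Fr\'echet category. This will be useful in the subsequent argument, where pairings with differential forms on $M$ are computed by pulling back along this $T^2 \to M$.
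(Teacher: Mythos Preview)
Your proof is correct and follows essentially the same approach as the paper: both identify $\tau_*\beta$ with the adjoint map $(s,t)\mapsto \beta(s)(t)$ obtained as $ev\circ(\beta\times\mathrm{Id}_{S^1})$. The only cosmetic difference is that the paper works on $[0,1]\times[0,1]$ and explicitly verifies the boundary identifications that make $\hat\beta$ descend to $T^2$, whereas you work directly with $S^1\times S^1$ from the start; the content is the same.
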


\begin{proof}
Since $\beta$ is a loop, $\beta : [0, 1] \to LM$ such that $\beta(0) = \beta(1)$.  Thus, for each $t \in [0, 1]$, $\beta(t) \in LM$ and we let $s \in [0, 1]$ denote the ``loop parameter.''  We define $\hat{\beta} : [0,1 ] \times [0,1] \to M$ by $\hat{\beta}(t, s) \overset{\rm def}{=} \beta(t)(s)$.  We claim that $\hat{\beta}(t, s)$ descends to a map $T^2 \to M$.  Notice that 
\begin{align*}
\hat{\beta}(0, s) = \hat{\beta}(1, s)
\end{align*}
because $\beta$ is a loop on $LM$.  Moreover,
\begin{align*}
\hat{\beta}(t, 0) = \hat{\beta}(t, 1)
\end{align*}
since $\beta(t)$ is a loop in $M$ for each $t \in [0, 1]$.  Therefore $\hat{\beta}(t, s)$ descends to a map $T^2 \to M$.

Recall that $\tau_* \beta = ev \circ (\beta \times Id_{S^1})$.  That is, if $s \in S^1$ and $t \in [0, 1]$, 
\begin{align*}
\tau_* \beta (t, s) &= ev( \beta(t), s) = \beta(t)(s) = \hat{\beta}(t, s).
\end{align*}
Therefore $\tau_* \beta = \hat{\beta}$, and so $\tau_* \beta$ is a 2-cycle given by a map $T^2 \to M$.
\end{proof}

\begin{lemma}\label{DegreeTorusMap}
Suppose $f : T^2 \to \Sigma_g$, with $g \geq 2$.  Then $\deg f = 0$.
\end{lemma}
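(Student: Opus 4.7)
My plan is to exploit the contrast between the cup product structures on $H^*(T^2;\mathbb{Q})$ and $H^*(\Sigma_g;\mathbb{Q})$ when $g\geq 2$, combined with the standard fact that a map of degree $d$ between closed oriented $n$-manifolds induces multiplication by $d$ on top cohomology (hence is injective on $H^n$ when $d\neq 0$). I will argue by contradiction, supposing $\deg f \neq 0$, and use that $f^*\colon H^2(\Sigma_g;\mathbb{Q})\to H^2(T^2;\mathbb{Q})$ is then injective (in fact an isomorphism, as both sides are one-dimensional over $\mathbb{Q}$).

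The first step is a dimension count. Since $\dim_{\mathbb{Q}} H^1(\Sigma_g;\mathbb{Q}) = 2g \geq 4$ while $\dim_{\mathbb{Q}} H^1(T^2;\mathbb{Q}) = 2$, the linear map $f^*\colon H^1(\Sigma_g;\mathbb{Q})\to H^1(T^2;\mathbb{Q})$ must have nontrivial kernel; pick any $0\neq\alpha\in\ker f^*|_{H^1}$. The second step uses Poincar\'e duality on $\Sigma_g$, which says that the cup product pairing $H^1(\Sigma_g;\mathbb{Q})\times H^1(\Sigma_g;\mathbb{Q})\to H^2(\Sigma_g;\mathbb{Q})\cong\mathbb{Q}$ is non-degenerate, hence there exists $\beta\in H^1(\Sigma_g;\mathbb{Q})$ with $\alpha\cup\beta\neq 0$ in $H^2(\Sigma_g;\mathbb{Q})$.

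The third step is the contradiction via naturality of the cup product: $f^*(\alpha\cup\beta) = f^*\alpha\cup f^*\beta = 0\cup f^*\beta = 0$ in $H^2(T^2;\mathbb{Q})$. But if $\deg f\neq 0$ then $f^*$ is injective on $H^2$, so $\alpha\cup\beta = 0$, contradicting the choice of $\beta$. Hence $\deg f = 0$.

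I do not anticipate a serious obstacle here; the only slightly delicate point is ensuring the correct formulation of Poincar\'e duality (equivalently, non-degeneracy of the intersection form) on a closed oriented surface, which is entirely standard. An alternative approach would be to observe that $\Sigma_g$ for $g\geq 2$ is a $K(\pi,1)$ whose fundamental group is hyperbolic, so every abelian subgroup of $\pi_1(\Sigma_g)$ is cyclic, forcing $f$ to factor up to homotopy through a map $T^2\to S^1\hookrightarrow\Sigma_g$ and hence through a space with vanishing $H^2$; but the cup-product argument above is shorter and self-contained.
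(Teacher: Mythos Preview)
Your proof is correct and takes a genuinely different route from the paper's. Both arguments start by assuming $f^*$ is nonzero on $H^2$ and exploit naturality of the cup product, but they diverge after that. The paper first shows that $f^*$ has rank $2$ on $H^1$ (by pulling back a pair $\omega_1,\omega_2$ with $\omega_1\smile\omega_2\neq 0$), then parlays this into injectivity of $f_*\colon\pi_1 T^2\to\pi_1\Sigma_g$, and finally invokes the group-theoretic fact that $\pi_1\Sigma_g$ contains no copy of $\mathbb{Z}^{\oplus 2}$ for $g\ge 2$. Your argument instead stays entirely in cohomology: you use the dimension gap $2g>2$ to find a nonzero $\alpha\in\ker f^*|_{H^1}$, and then Poincar\'e duality (non-degeneracy of the intersection form) to produce $\beta$ with $\alpha\smile\beta\neq 0$, yielding the contradiction directly. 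Your approach is shorter and avoids the appeal to the structure of surface groups, at the cost of invoking Poincar\'e duality; the paper's approach makes the obstruction more geometric but relies on a nontrivial fact about $\pi_1\Sigma_g$ that is essentially the hyperbolicity statement you mention in your alternative sketch.
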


\begin{proof}
Let $f : T^2 \to \Sigma_g$ and suppose $f^* : H^2(\Sigma_g ; \mathbb{Z}) \to H^2(T^2 ; \mathbb{Z})$ is nonzero.  We claim that $f^* : H^1(\Sigma_g ; \mathbb{Z}) \to H^1(T^2 ; \mathbb{Z})$ has rank 2.  For we may take $\omega_1, \omega_2 \in H^1(\Sigma_g ; \mathbb{Z})$ such that $\omega_1 \smile \omega_2 \neq 0$, and so
\begin{align*}
f^*\omega_1 \smile f^* \omega_2 = f^*( \omega_1 \smile  \omega_2) \neq 0,
\end{align*}
proving $f^*\omega_1$ and $f^* \omega_2$ are linearly independent.  Therefore $f^* : H^1(\Sigma_g ; \mathbb{Z}) \to H^1(T^2 ; \mathbb{Z})$ has rank 2.

Moreover, we claim that $f_* : \pi_1 T^2 \to \pi_1 \Sigma_g$ is injective.  For let $\gamma \in \pi_1 T^2$ be nonzero.  We may pick $\eta \in H^1(T^2 ; \mathbb{Z})$ such that $\langle \eta, \gamma \rangle \neq 0$.  Because $f^* : H^1(\Sigma_g ; \mathbb{Z}) \to H^1(T^2 ; \mathbb{Z})$ has rank 2, it is surjective and we may pick $\omega \in H^1(\Sigma_g ; \mathbb{Z})$ such that $f^* \omega = \eta$. Therefore
\begin{align*}
0 &\neq \langle \eta, \gamma \rangle = \langle f^*\omega, \gamma \rangle = \langle \omega, f_* \gamma \rangle,
\end{align*}
proving $f_* \gamma \neq 0$.

However, $\pi_1 \Sigma_g$ does not contain a copy of $\mathbb{Z}^{\oplus 2} \cong \pi_1 T^2$, which is a contradiction.  Therefore  $f^* : H^2(\Sigma_g ; \mathbb{Z}) \to H^2(T^2 ; \mathbb{Z})$ is the zero map, and so $\deg f = 0$.
\end{proof}

\begin{proposition}
$\tau_* : H_1(L\Sigma_g) \to H_2(\Sigma_g)$ is zero.
\end{proposition}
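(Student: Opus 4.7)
The plan is to combine the two preceding lemmas in an essentially linear fashion: Lemma \ref{TorusCycle} identifies the image $\tau_*\beta$ of a loop $\beta \subset L\Sigma_g$ with a 2-cycle represented by a smooth map $T^2 \to \Sigma_g$, and Lemma \ref{DegreeTorusMap} shows every such map has degree zero when $g \geq 2$. Since $\Sigma_g$ is a closed oriented surface, $H_2(\Sigma_g;\mathbb{Z}) \cong \mathbb{Z}$ is generated by the fundamental class, so a degree zero map $f : T^2 \to \Sigma_g$ sends $[T^2]$ to $0$. Combining, $\tau_*[\beta] = 0$ for every loop $\beta$ in $L\Sigma_g$.

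To upgrade this from loops to arbitrary classes in $H_1(L\Sigma_g)$, I would first reduce to the loop case. Decompose $L\Sigma_g = \coprod_c (L\Sigma_g)_c$ into its path components (indexed by conjugacy classes in $\pi_1\Sigma_g$), giving $H_1(L\Sigma_g) = \bigoplus_c H_1((L\Sigma_g)_c)$. On each path component, the Hurewicz map $\pi_1((L\Sigma_g)_c) \to H_1((L\Sigma_g)_c)$ is surjective, so any class $\alpha \in H_1(L\Sigma_g)$ can be written as a finite integer linear combination $\alpha = \sum_i n_i [\beta_i]$ of classes of loops $\beta_i : S^1 \to L\Sigma_g$. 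Since $\tau_*$ is $\mathbb{Z}$-linear, the loop case implies $\tau_*\alpha = \sum_i n_i \tau_*[\beta_i] = 0$.

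The main step requiring any real thought is justifying that we may represent $H_1$ classes by loops rather than abstract singular 1-cycles, but once the path-component decomposition is made explicit this is standard. Everything else is formal: the preceding two lemmas encapsulate exactly the geometric content, namely that any torus swept out in $\Sigma_g$ by a 1-parameter family of loops is null-homologous when $g \geq 2$, because $\pi_1(\Sigma_g)$ contains no free abelian subgroup of rank $2$. In particular, no obstruction from $H_2(\Sigma_g;\mathbb{Z})$ survives, so $\tau_*$ vanishes identically.
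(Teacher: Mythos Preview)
Your proof is correct and follows essentially the same approach as the paper: reduce an arbitrary $H_1$ class to a combination of loops, apply Lemma~\ref{TorusCycle} to realize each $\tau_*\beta_i$ as a map $T^2 \to \Sigma_g$, and invoke Lemma~\ref{DegreeTorusMap} to conclude each has degree zero and hence is null-homologous. The only cosmetic difference is that the paper works at the chain level (writing a singular $1$-cycle as a sum of loops) and concludes via pairing with an arbitrary $2$-form, whereas you work at the homology level via Hurewicz on each path component and directly use $f_*[T^2] = (\deg f)[\Sigma_g] = 0$; your version is arguably more careful about the reduction-to-loops step.
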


\begin{proof}
Let $\beta$ be a 1-cycle on $L\Sigma_g$.  Then we may write $\beta = \sum_i a_i \beta_i$, where $a_i \in \mathbb{Z}$ and $\beta_i$ is a loop on $L\Sigma_g$.  Then $\tau_* \beta = \sum_i a_i \tau_* \beta_i$ is a 2-cycle on $\Sigma_g$ and each $\tau_* \beta_i$ is given by a map $f_i : T^2 \to \Sigma_g$, by Lemma \ref{TorusCycle}.  Therefore $\tau_* [\beta] = \sum_i  a_i {f_i}_* [T^2] \in H_2(\Sigma_g)$.  Let $\omega \in \Lambda^2(\Sigma_g)$.  Then
\begin{align*}
\langle  \omega, \tau_* \beta \rangle &= \sum_i a_i \langle \omega, {f_i}_* [T^2] \rangle = \sum_i a_i (\deg f_i) \langle \omega, [\Sigma_g] \rangle = 0,
\end{align*}
as Lemma \ref{DegreeTorusMap} guarantees $\deg f_i = 0$.  Since $\omega$ was arbitrary, we have shown $\tau_* [\beta] = 0$, proving $\tau_*  = 0$.
\end{proof}

\begin{corollary}\label{TransgressionVanishes}
$\tau^* : H^2(\Sigma_g ; \mathbb{Z}) \to H^1(L\Sigma_g ; \mathbb{Z})$ is zero.
\end{corollary}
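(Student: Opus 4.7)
The plan is to deduce the vanishing of $\tau^*$ directly from the vanishing of $\tau_*$ on $\Sigma_g$ (the preceding proposition) by invoking the adjoint property of the transgression maps together with the fact that $H^1(L\Sigma_g;\mathbb{Z})$ is torsion-free. The key observation is that, since $H^1$ is torsion-free, a degree-one cohomology class is determined by its pairings with homology classes, so vanishing of all such pairings forces the class itself to vanish.

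More concretely, first I would pick an arbitrary $\omega \in H^2(\Sigma_g;\mathbb{Z})$ and an arbitrary $\beta \in H_1(L\Sigma_g;\mathbb{Z})$ and apply the adjoint relation
\begin{align*}
\langle \tau^*\omega, \beta \rangle = \langle \omega, \tau_* \beta \rangle.
\end{align*}
Since the previous proposition says $\tau_*\beta = 0 \in H_2(\Sigma_g)$, the right-hand side vanishes. Hence $\langle \tau^*\omega, \beta\rangle = 0$ for every $\beta \in H_1(L\Sigma_g;\mathbb{Z})$.

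Next I would appeal to Lemma \ref{TorsionFree}, which tells us $H^1(L\Sigma_g;\mathbb{Z})$ is torsion-free. Combined with the universal coefficient short exact sequence
\begin{align*}
0 \to \mathrm{Ext}(H_0(L\Sigma_g),\mathbb{Z}) \to H^1(L\Sigma_g;\mathbb{Z}) \to \mathrm{Hom}(H_1(L\Sigma_g),\mathbb{Z}) \to 0,
\end{align*}
and the vanishing of the $\mathrm{Ext}$ term (since $H_0$ is free), we conclude that the evaluation map $H^1(L\Sigma_g;\mathbb{Z}) \to \mathrm{Hom}(H_1(L\Sigma_g),\mathbb{Z})$ is injective. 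Therefore the element $\tau^*\omega$, which pairs to zero with every class in $H_1(L\Sigma_g)$, must itself be zero.

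There is no real obstacle here; the proof is essentially a one-line duality argument. The only subtle point worth checking, and the piece that would take a moment of care, is the justification that the adjoint formula $\langle \tau^*\omega,\beta\rangle = \langle \omega,\tau_*\beta\rangle$ holds with the sign and slant/cross product conventions used in the definitions of $\tau^*$ and $\tau_*$ given earlier. Once that naturality is in place, the rest is immediate from the preceding proposition and Lemma \ref{TorsionFree}.
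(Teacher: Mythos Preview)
Your argument is correct and follows the same route as the paper's proof: use the adjoint identity $\langle \tau^*\omega,\beta\rangle=\langle\omega,\tau_*\beta\rangle$ together with the preceding proposition that $\tau_*=0$. You are simply more explicit than the paper in invoking the universal coefficient theorem (via Lemma~\ref{TorsionFree}) to justify that vanishing of all pairings forces $\tau^*\omega=0$, a step the paper leaves implicit.
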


\begin{proof}
Let $\beta$ be a 1-cycle on $L\Sigma_g$.  Then $\langle \tau^*\omega, \beta \rangle = \langle \omega, \tau_* \beta \rangle = 0$, proving that $\tau^*\omega = 0$.
\end{proof}

In particular, we may use this proposition to produce examples of loop spaces that admit non-trivial $c_1^{S^1}(\mathcal{E})$.  

\begin{proposition}
Let $E \to \Sigma_g$ be a non-trivial complex line bundle.  Then $c_1^{S^1}(\mathcal{E}) \neq 0$.
\end{proposition}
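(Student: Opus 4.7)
The plan is to apply Proposition \ref{NontrivialFCCCriterion} directly, using the preceding calculations in this subsection to verify its hypotheses. Concretely, I need to check that (i) $\tau^* c_1(E) = 0$ in $H^1(L\Sigma_g; \mathbb{Z})$, and (ii) $c_1(E) \neq 0$ in $H^2(\Sigma_g; \mathbb{C})$.

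For (i), I would simply invoke Corollary \ref{TransgressionVanishes}, which, under the standing assumption $g \geq 2$, states that the transgression $\tau^* : H^2(\Sigma_g; \mathbb{Z}) \to H^1(L\Sigma_g; \mathbb{Z})$ is the zero map; in particular, $\tau^* c_1(E) = 0$ regardless of which line bundle $E$ we consider.

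For (ii), I would use the fact that complex line bundles on a manifold are classified up to isomorphism by their first Chern class in $H^2(\Sigma_g; \mathbb{Z})$, so non-triviality of $E$ means $c_1(E) \neq 0$ in $H^2(\Sigma_g; \mathbb{Z}) \cong \mathbb{Z}$. Since this group is torsion-free, the image of $c_1(E)$ under the change-of-coefficients map $H^2(\Sigma_g; \mathbb{Z}) \to H^2(\Sigma_g; \mathbb{C})$ remains non-zero.

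With (i) and (ii) in hand, Proposition \ref{NontrivialFCCCriterion} immediately yields $c_1^{S^1}(\mathcal{E}) \neq 0$. There is no real obstacle here: the proof is essentially a one-line assembly of the transgression vanishing result for Riemann surfaces of genus $g \geq 2$ and the non-vanishing criterion from the previous subsection. The technical work has already been carried out in proving Corollary \ref{TransgressionVanishes} via Lemmas \ref{TorusCycle} and \ref{DegreeTorusMap}, so this final proposition is simply the payoff that produces honest examples of pushdown bundles with non-trivial $S^1$-equivariant first Chern class on loop spaces of closed orientable manifolds.
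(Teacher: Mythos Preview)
Your proposal is correct and matches the paper's proof essentially line for line: invoke Corollary \ref{TransgressionVanishes} for $\tau^* c_1(E) = 0$, use non-triviality of $E$ plus torsion-freeness of $H^2(\Sigma_g;\mathbb{Z}) \cong \mathbb{Z}$ to get $c_1(E) \neq 0$ in $H^2(\Sigma_g;\mathbb{C})$, and conclude by Proposition \ref{NontrivialFCCCriterion}.
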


\begin{proof}
Corollary \ref{TransgressionVanishes} shows $\tau^* c_1(E) = 0$, while $c_1(E) \neq 0$ in $H^2(\Sigma_g ; \mathbb{Z})$ as $E$ is non-trivial.  Because $H^2(\Sigma_g ; \mathbb{Z}) \cong \mathbb{Z}$ is torsion-free, $c_1(E) \neq 0$ in $H^2(\Sigma_g ; \mathbb{C})$ as well.  Therefore, by Proposition \ref{NontrivialFCCCriterion}, $c_1^{S^1}(\mathcal{E}) \neq 0$.
\end{proof}

\end{document}